\documentclass{article}
\usepackage[utf8]{inputenc}
\usepackage{amsmath}
\usepackage{amsfonts}
\usepackage{amssymb}
\usepackage{tikz-cd}
\usepackage{color,soul}
\usepackage{array}
\usepackage{zref-savepos}
\usepackage{amsthm}
\usepackage{multirow, bigstrut}
\usepackage[font={small,it}]{caption}
\usepackage{tikz}
\usepackage{amsmath}
\usetikzlibrary{arrows}

\usepackage{hyperref}
\hypersetup{
    colorlinks=true,
    linkcolor=blue,
    filecolor=magenta,  
    urlcolor=cyan,
}

\usetikzlibrary{calc}
\usepackage{comment}
\usepackage{enumerate}
\usepackage{tikz}
\usetikzlibrary{shapes.geometric}

\newtheorem{theorem}{Theorem}[section]
\newtheorem*{theorem*}{Theorem}
\newtheorem{lemma}[theorem]{Lemma}
\newtheorem{proposition}[theorem]{Proposition}
\newtheorem{corollary}[theorem]{Corollary}
\newtheorem{conjecture}[theorem]{Conjecture}

\newtheorem{fact}[theorem]{Fact}

\theoremstyle{definition}
\newtheorem{definition}[theorem]{Definition}

\theoremstyle{plain}

\newcommand{\C}{\mathbb{C}}
\newcommand{\R}{\mathbb{R}}
\newcommand{\N}{\mathbb{N}}
\newcommand{\Z}{\mathbb{Z}}
\newcommand{\K}{\mathbb{K}}

\DeclareMathOperator{\Hmg}{Hmg}
\DeclareMathOperator{\Pol}{Pol}

\DeclareMathOperator{\Symb}{Symb}

\DeclareMathOperator{\cpc}{Cap}
\DeclareMathOperator{\Newt}{Newt}
\DeclareMathOperator{\supp}{supp}

\title{Counting Matchings via Capacity Preserving Operators}
\author{Leonid Gurvits and Jonathan Leake}

\begin{document}

\sloppy

\maketitle

\begin{abstract}
    The notion of the capacity of a polynomial was introduced by Gurvits around 2005, originally to give drastically simplified proofs of the Van der Waerden lower bound for permanents of doubly stochastic matrices and Schrijver's inequality for perfect matchings of regular bipartite graphs. Since this seminal work, the notion of capacity has been utilized to bound various combinatorial quantities and to give polynomial-time algorithms to approximate such quantities (e.g., the number of bases of a matroid). These types of results are often proven by giving bounds on how much a particular differential operator can change the capacity of a given polynomial. In this paper, we unify the theory surrounding such capacity preserving operators by giving tight capacity preservation bounds for all nondegenerate real stability preservers. We then use this theory to give a new proof of a recent result of Csikv\'ari, which settled Friedland's lower matching conjecture.
\end{abstract}

\section{Introduction}


Over the past few decades, the theory of real stable polynomials has found various applications, particularly within combinatorics, probability, computer science, and optimization (e.g., see \cite{bb2}, \cite{choe2004homogeneous} and references therein). Classic examples include the multivariate matching polynomial and the spanning tree polynomial, both of which are real stable for any given graph. The role that polynomials often play in these applications is that of conceptual unification: various natural operations that one may apply to a given type of object can often be represented as natural operations applied to associated polynomials. For the matching polynomial deletion and contraction correspond to certain derivatives, and for the spanning tree polynomial this idea extends to the minors of a matroid in general. Even in optimization (specifically hyperbolic programming), certain relaxations of convex domains translate into directional derivatives of associated polynomials in a similar way \cite{renegar2006hyperbolic}.

Real stability then adds extra information that may be useful to track. For example, the real stability of the matching polynomial easily implies that the number of size $k$ matchings of a graph forms a log-concave sequence \cite{HL}. As it turns out, real stability is far more generally connected to log-concavity than this, and we will see this at play in the main results of this paper. Specifically, the so-called strong Rayleigh inequalities (see \cite{strongrayleigh}) will play a crucial role in our analysis. Related inequalities have recently have gained importance through the exciting work on a so-called Hodge theory for matroids \cite{adiprasito2018hodge}. Results similar to those discussed here can even be extended to basis generating polynomials of matroids in general (not all of which are real stable); see \cite{huh2018correlation} and \cite{anari2018log}.

The particular line into which this paper falls then begins with the work of the first author, who in a series of papers (e.g., see \cite{gurvits2008van}) gave a vast generalization of the Van der Waerden lower bound for permanents of doubly stochastic matrices and the Schrijver lower bound on the number of perfect matchings of regular graphs. In particular, he showed that a related inequality holds for real stable polynomials in general, and then derives each of the referenced results as corollaries. His inequality describes how much the derivative can affect a particular analytic quantity called the capacity of a polynomial, and our main goal in this paper is to extend this bound to a much larger class of linear operators on polynomials.

\subsection{Prior Results}

Before stating our results, we recall a few results regarding capacity and stable polynomials which will be crucial to the framing of our results. First, we give the definition of polynomial capacity, where here (and throughout this paper) we use the notation $x^\alpha := \prod_k x_k^{\alpha_k}$.

\begin{definition}
    Given a polynomial $p \in \R[x_1,...,x_n]$ with non-negative coefficients and a vector $\alpha \in \R^n$ with non-negative entries, we define the \emph{$\alpha$-capacity} of $p$ as:
    \[
        \cpc_\alpha(p) := \inf_{x > 0} \frac{p(x)}{x^\alpha} = \inf_{x_1,\ldots,x_n > 0} \frac{p(x_1,\ldots,x_n)}{x_1^{\alpha_1} \cdots x_n^{\alpha_n}}.
    \]
\end{definition}

This definition was motivated by Gurvits' capacity inequality for the derivative, which we state now.

\begin{theorem}[\cite{gurvits2006hyperbolic}] \label{thm:gurvits_intro}
    Let $p \in \R[x_1,...,x_n]$ be a real stable polynomial of degree at most $\lambda_k$ in $x_k$ with non-negative coefficients. Then:
    \[
        \frac{\cpc_{(1^{n-1})}\left(\left.\partial_{x_k}p\right|_{x_k=0}\right)}{\cpc_{(1^n)}(p)} \geq \left(\frac{\lambda_k - 1}{\lambda_k}\right)^{\lambda_k - 1}.
    \]
    Here, $(1^j)$ denotes the all-ones vector of length $j$.
\end{theorem}

The way one should interpret this result is as a statement about the capacity preservation properties of the derivative. That is, taking a partial derivative of a real stable polynomial (and then evaluating to 0) can only decrease the capacity of that polynomial by at most the stated multiplicative factor.

For those familiar with the real stability literature, the concept of preservation properties of a linear operator (specifically that of the derivative here) is not new. Perhaps the most essential result in the theory is the Borcea-Br\"and\'en characterization \cite{bb1}, which characterizes all linear operators on polynomials which preserve the property of being real stable. This result relies on the concept of the symbol of a linear operator $T$, denoted $\Symb(T)$, which is a single specific polynomial (or power series) associated to $T$. We give the gist of the characterization here but delay the definition of the symbol and the formal statement of the theorem until later.

\begin{theorem}[Borcea-Br\"and\'en characterization \cite{bb1}; see Theorems \ref{bbbounded} and \ref{bbtrans}]
    Let $T$ be a real linear operator on polynomials. Then morally speaking, $T$ preserves the property of being real stable if and only if $\Symb(T)$ is real stable.
\end{theorem}

\subsection{Our Results}

The Borcea-Br\"and\'en characterization says that the symbol of a linear operator $T$ holds the real stability preservation information of $T$. In this paper, we make use of this concept by showing that the symbol also holds the capacity preservation information of $T$. That is, we combine the ideas of Gurvits and of Borcea and Br\"and\'en to create a theory of capacity preserving operators. Our main results in this direction are stated slightly informally in the following theorems. Note that $\Symb$ will take on two different definitions in the formal statements of these results (see Definitions \ref{symbbounded} and \ref{symbtrans}), and we will explicate this rigorously later.

\begin{theorem}[= Theorem \ref{mainthmbounded}; Bounded degree case]
    Let $T$ be a linear operator on polynomials of degree at most $\lambda_k$ in $x_k$, such that $\Symb(T)$ is real stable with non-negative coefficients. Further, let $p \in \R[x_1,...,x_n]$ be a real stable polynomial of degree at most $\lambda_k$ in $x_k$ with non-negative coefficients. Then for any sensible non-negative vectors $\alpha,\beta \in \R^n$:
    \[
        \frac{\cpc_\beta(T(p))}{\cpc_\alpha(p)} \geq \frac{\alpha^\alpha(\lambda-\alpha)^{\lambda-\alpha}}{\lambda^\lambda} \cpc_{(\alpha,\beta)}(\Symb(T)).
    \]
    Further, this bound is tight for fixed $T,\alpha,\beta$.
\end{theorem}

\begin{theorem}[= Theorem \ref{mainthmtrans}; Unbounded degree case]
    Let $T$ be a linear operator on polynomials of any degree, such that $\Symb(T)$ is in the Laguerre-P\'olya class\footnote{For those unfamiliar, the Laguerre-P\'olya class consists of limits of real stable polynomials.} with non-negative coefficients. Further, let $p \in \R[x_1,...,x_n]$ be any real stable polynomial with non-negative coefficients. Then for any sensible non-negative vectors $\alpha,\beta \in \R^n$:
    \[
        \frac{\cpc_\beta(T(p))}{\cpc_\alpha(p)} \geq e^{-\alpha} \alpha^\alpha \cpc_{(\alpha,\beta)}(\Symb(T)).
    \]
    Further, this bound is tight for fixed $T,\alpha,\beta$.
\end{theorem}

Using these theorems, we are able to reprove several results. The first of these is Gurvits' theorem, 
which he used to obtain the corollaries mentioned above: the Van der Waerden lower bound (see \cite{falikman1981proof} and \cite{erorychev1981proof} for the original resolution of this conjecture) and Schrijver's inequality \cite{schrijver1998counting}. We reprove Gurvits' theorem using the capacity preservation theory, which amounts to a very basic computation for $T = \left.\partial_{x_k}\right|_{x_k=0}$.

Our main application is then related to counting matchings of regular bipartite graphs. Counting the number of matchings in a graph is related to the monomer-dimer problem of evaluating/approximating the monomer-dimer partition function of a given graph. This problem is one of the oldest and most important problems in statistical physics, with much of the importance being due to the famous paper of Heilmann and Lieb \cite{HL}. Their results on the location of phase transitions of the partition function (i.e., the location of zeros of the matching polynomial) have had widespread influence, even playing a crucial role in the (somewhat) recent resolution of the Kadison-Singer conjecture by Marcus, Spielman, and Srivastava \cite{marcus2015interlacing}.

Specifically, in Section \ref{sec:biregular_graphs} we give a simpler proof of Csikv\'ari's bound on the number of $k$-matchings of a biregular bipartite graph \cite{csikvari2014lower}. This result generalizes Schrijver's inequality and is actually a strengthening of Friedland's lower matching conjecture (see \cite{friedland2008number}). The computations involved in this new proof never exceed the level of basic calculus. This was one of the most remarkable features of Gurvits' original result, and this theme continues to play out here. We state Csikv\'ari's result now.

\begin{theorem}[\cite{csikvari2014lower}]
    Let $G$ be an $(a,b)$-biregular bipartite graph with $(m,n)$-bipartitioned vertices (so that $am = bn$ is the number of edges of $G$). Then the number of size-$k$ matchings of $G$ is bounded as follows:
    \[
        \mu_k(G) \geq \binom{n}{k}(ab)^k \frac{m^m(ma-k)^{ma-k}}{(ma)^{ma}(m-k)^{m-k}}.
    \]
\end{theorem}

We also note that partial results toward such a bound, using techniques similar to those used in this paper, were achieved prior to Csikv\'ari's result. First in \cite{friedland2008number}, the original lower matching conjecture (for regular graphs) was settled for degree $2$ and for $k \leq 4$. Further in \cite{friedland2008lower}, partial results are given for the asymptotic version of the lower matching conjecture, and in \cite{gurvits2011unleashing} this asymptotic version is settled. In these last two papers, stable polynomials and results derived from Theorem \ref{thm:gurvits_intro} were used.

Beyond these specific applications, one of the main purposes of this paper is to unify the various results that fit into the lineage of the concept of capacity. This includes inequalities for the permanent, the mixed discriminant, and the number of perfect matchings of bipartite graphs \cite{gurvits2008van, gurvits2011unleashing}, inequalities for coefficients of stable and log-concave polynomials \cite{gurvits2009multivariate}, approximation algorithms counting bases of stable matroids \cite{anari2017generalization, straszak2017real}, approximation algorithms for counting the intersection of two general matroids \cite{anari2018log}, capacity preservation results for elementary symmetric differential operators \cite{zackrisson2017coefficients}, and capacity preservation results for differential operators on multiaffine and general degree polynomials \cite{anari2017generalization}.

The rest of this paper is outlined as follows. In \S\ref{preliminaries}, we discuss some preliminary facts about real stability and capacity. In \S\ref{applications}, we discuss applications of the capacity preservation theory. In \S\ref{mainineq}, we prove the main inequalities. In \S\ref{continuity}, we discuss some continuity properties of capacity.

\section{Preliminaries} \label{preliminaries}

We first discuss some basics of the theories of real stability and of capacity. This section will consist mainly of well-known and/or standard results that will enable us to state our main results in the next section formally. Other results needed to prove the main theorems will be left to later sections.

\subsection{Notation}

Let $\C,\R,\Z,\N$ denote the complex numbers, real numbers, integers, and positive integers respectively. Also, let $\R_+$ and $\R_{++}$ denote the non-negative and positive reals respectively, and let $\Z_+$ denote the non-negative integers. With this we let $\K[x_1,...,x_n]$ denote the set of polynomials with coefficients in $\K$, where $\K$ can be any of the previously defined sets of numbers. Further, for $\lambda \in \Z_+^n$ we let $\K^\lambda[x_1,...,x_n]$ denote the set of polynomials of degree at most $\lambda_k$ in $x_k$ with coefficients in $\K$.

For $\mu,\lambda \in \Z_+^n$, we define $\mu! := \prod_k (\mu_k!)$ and $\binom{\lambda}{\mu} := \frac{\lambda!}{\mu!(\lambda-\mu)!}$. For $x,\alpha \in \R_+^n$ we define $\alpha \leq x$ via $\alpha_k \leq x_k$ for all $k$, we define $x\alpha := \prod_k x_k \alpha_k$, and we define $x^\alpha := \prod_k x_k^{\alpha_k}$ as in the definition of capacity. We also let $(1^n) \in \Z_+^n$ denote the all-ones vector of length $n$. Finally for $p \in \K[x_1,...,x_n]$ we let $p_\mu$ denote the coefficient of $p$ corresponding to the term $x^\mu$, and in this vein we will sometimes let $x$ refer to the vector of variables $(x_1,...,x_n)$.

\subsection{Real Stability}

We call a polynomial $p \in \R[x_1,...,x_n]$ real stable if $p \equiv 0$ or $p(x_1,...,x_n) \neq 0$ whenever all $x_k$ are in the upper half-plane. Note that for $n=1$, $p$ is a univariate polynomial and real stability is equivalent to having all real roots. The theory of stable polynomials enjoys a nice inductive structure deriving from a large class of linear operators on polynomials which preserve the property of being real stable. We called such operators \emph{real stability preservers}, and the most basic of these are given as follows.

\begin{proposition}[Basic real stability preservers]\label{rsclosure}
    Let $p \in \R^\lambda[x_1,...,x_n]$ be real stable. Then the following are also real stable.
    \begin{enumerate}
        \item Permutation: $p(x_{\sigma(1)}, ..., x_{\sigma(n)})$ for any $\sigma \in S_n$
        \item Scaling: $p(a_1x_1,...,a_nx_n)$ for any fixed $a \in \R_+^n$
        \item Specialization: $p(b,x_2,...,x_n)$ for any fixed $b \in \R$
        \item Inversion: $x^\lambda p(x_1^{-1},...,x_n^{-1})$
        \item Differentiation: $\partial_{x_k}p$ for any $k$
        \item Diagonalization: $\left.p\right|_{x_j=x_k}$ for any $j,k$
    \end{enumerate}
\end{proposition}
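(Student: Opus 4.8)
The plan is to check each of the six closures directly against the definition, using only two classical analytic inputs as black boxes: Hurwitz's theorem in several variables (a locally uniform limit of nowhere-vanishing holomorphic functions on a connected open set is either nowhere-vanishing or identically $0$) and the Gauss--Lucas theorem (the roots of $q'$ lie in the convex hull of the roots of $q$). In every case the output visibly has real coefficients, so only the non-vanishing condition needs attention, and we may assume $p \not\equiv 0$ since each operation sends $0$ to $0$, which is real stable by our convention.

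Items (1), (2), (6) are essentially formal. For permutation, reordering arguments maps $\UHP^n$ to itself. For diagonalization, if $x_1,\dots,x_n \in \UHP$ then feeding the common value of $x_k$ into both the $j$th and $k$th slots of $p$ produces a point of $\UHP^n$, where $p$ is nonzero. For scaling with $a \in \R_{++}^n$, multiplication by a positive real preserves $\UHP$, so $(a_1x_1,\dots,a_nx_n) \in \UHP^n$ whenever $x \in \UHP^n$; the case where some $a_k$ vanishes reduces to specialization (3). For specialization itself, fix $b \in \R$ and set $F_\varepsilon(x_2,\dots,x_n) := p(b+i\varepsilon, x_2,\dots,x_n)$: for $\varepsilon > 0$ we have $b + i\varepsilon \in \UHP$, so $F_\varepsilon$ is nowhere-vanishing on $\UHP^{n-1}$, and letting $\varepsilon \to 0^+$ the $F_\varepsilon$ converge locally uniformly to $p(b,x_2,\dots,x_n)$, which Hurwitz then forces to be nowhere-vanishing on $\UHP^{n-1}$ or identically zero. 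For inversion, note $z \in \UHP$ implies $z^{-1} \in \LHP$ because $\operatorname{Im}(z^{-1}) = -\operatorname{Im}(z)/|z|^2 < 0$, and that real coefficients give $p(\bar z_1,\dots,\bar z_n) = \overline{p(z_1,\dots,z_n)}$, so $p$ is also nowhere-vanishing on $\LHP^n$; hence $p(x_1^{-1},\dots,x_n^{-1}) \ne 0$ on $\UHP^n$, and multiplying by $x^\lambda$ (which clears all denominators since $\deg_{x_k} p \le \lambda_k$, and is nonzero on $\UHP^n$) preserves this.

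The one case with genuine content is differentiation (5); it suffices to show $\partial_{x_1} p$ is nowhere-vanishing on $\UHP^n$. Fix $a_2,\dots,a_n \in \UHP$ and let $q(x_1) := p(x_1,a_2,\dots,a_n)$, a univariate polynomial which by real stability of $p$ has no roots in $\UHP$, i.e.\ all roots in the closed lower half-plane. If $\deg_{x_1} p = 0$ then $\partial_{x_1} p \equiv 0$ and we are done; otherwise, writing $d = \deg_{x_1} p$ and letting $\ell$ denote the coefficient of $x_1^d$ in $p$, a Hurwitz argument applied to $(is)^{-d} p(is,\cdot) \to \ell(\cdot)$ as $s \to \infty$ shows $\ell$ is itself real stable, hence nowhere-vanishing on $\UHP^{n-1}$; in particular $q$ has degree exactly $d \ge 1$ and is nonconstant. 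Writing $q = c\prod_i (x_1 - r_i)$ with each $r_i$ in the closed lower half-plane, for $x_1 \in \UHP$ we have $x_1 - r_i \in \UHP$, so $q'(x_1)/q(x_1) = \sum_i (x_1 - r_i)^{-1}$ has strictly negative imaginary part and hence is nonzero; as $q(x_1) \ne 0$ there too, $q'(x_1) \ne 0$ (equivalently, apply Gauss--Lucas: the roots of $q'$ lie in the convex closed lower half-plane). Since $q'(x_1) = (\partial_{x_1} p)(x_1,a_2,\dots,a_n)$ and the $a_k \in \UHP$ were arbitrary, $\partial_{x_1} p$ is nowhere-vanishing on $\UHP^n$.

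The main obstacle is exactly this last item: unlike the others, it rests on a real theorem (Gauss--Lucas) rather than a formal manipulation, and the subtle point is the possible drop of $x_1$-degree when one slices $p$ at $(a_2,\dots,a_n) \in \UHP^{n-1}$ — if unaccounted for, $q$ could be constant and $q' \equiv 0$, breaking the argument. This is precisely what the leading-coefficient/Hurwitz observation rules out, and it is the one spot where I would be careful to write out the details.
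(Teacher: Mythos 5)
The paper states this proposition without proof, treating it as standard background (these facts appear with proofs in, e.g., the work of Borcea--Br\"and\'en or Wagner's survey), so there is no paper proof to compare against. Your argument is correct and self-contained: items (1)--(4) and (6) are correctly reduced to formal facts about the half-plane plus one Hurwitz limiting step for specialization, and for (5) you use the correct log-derivative identity $q'/q = \sum_i (x_1 - r_i)^{-1}$ together with the observation that each $x_1 - r_i \in \UHP$ has reciprocal with strictly negative imaginary part. You were right to flag the degree-drop subtlety in (5) as the one genuinely delicate point: without the Hurwitz argument showing the leading-coefficient polynomial $\ell(x_2,\dots,x_n)$ is itself nonvanishing on $\UHP^{n-1}$, one could have $q$ constant in $x_1$ at the fixed slice and the root-sum formula would be vacuous. (A small alternative for that step, equivalent in content: the drop in $x_1$-degree at a slice $(a_2,\dots,a_n) \in \UHP^{n-1}$ would force $\ell(a_2,\dots,a_n)=0$, contradicting stability of $\ell$, which one can also deduce directly by noting $\ell = \lim_{s\to\infty} (is)^{-d} p(is,\cdot)$ as you do.) One minor remark: for scaling with $a \in \R_+^n$ (allowing zeros), the cleanest phrasing is that it is a composition of specialization at $0$ in the zero coordinates followed by strictly positive scaling in the remainder, rather than a straight reduction to (3); what you wrote conveys this but slightly compresses it.
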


A classical but more interesting real stability preserver is \emph{polarization}. Polarization plays a crucial role in the theory of real stability preservers, as it allows one to restrict to polynomials of degree at most 1 in every variable. We will see later that polarization also plays a crucial role in the theory of capacity preservers.

\begin{definition}
    Given $q \in \R^d[x]$, we define $\Pol^d(q)$ to be the unique symmetric $f \in \R^{(1^d)}[x_1,...,x_d]$ such that $f(x,...,x) = q(x)$. Given $p \in \R^\lambda[x_1,...,x_n]$, we define $\Pol^\lambda(p) := (\Pol^{\lambda_1} \circ \cdots \circ \Pol^{\lambda_n})(p)$, where $\Pol^{\lambda_k}$ acts on the variable $x_k$ for each $k$. Note that $\Pol^\lambda(p) \in \R^{(1^{\lambda_1 + \cdots + \lambda_n})}[x_{1,1},...,x_{n,\lambda_n}]$.
\end{definition}

\begin{proposition}[\cite{walsh}] \label{rspol}
    Given $p \in \R^\lambda[x_1,...,x_n]$, we have that $p$ is real-stable iff $\Pol^\lambda(p)$ is real stable.
\end{proposition}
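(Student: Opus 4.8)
The plan is to split the claim into its two implications, dispatch the reverse one ($\Pol^\lambda(p)$ real stable $\Rightarrow p$ real stable) quickly, and spend the real effort on the forward one, which is the classical polarization theorem of Walsh. For the reverse implication I would simply observe that performing, for each $k$, the substitutions $x_{k,i}=x_{k,1}$ over all $i$ (and renaming $x_k:=x_{k,1}$) turns $\Pol^\lambda(p)$ into $p$ by the defining property of polarization; since each such substitution merely identifies two variables, it preserves real stability by the diagonalization clause of Proposition~\ref{rsclosure}, and iterating gives the claim.

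For the forward implication I would first reduce to a single variable: because $\Pol^\lambda=\Pol^{\lambda_1}\circ\cdots\circ\Pol^{\lambda_n}$ with the $k$-th factor acting only on $x_k$, and polarizing in $x_k$ leaves the degree in every other variable untouched, it suffices to show that polarizing a real stable $q\in\R^\lambda[x_1,\dots,x_n]$ in the single variable $x:=x_1$ (to degree $d:=\lambda_1$) again yields a real stable polynomial; one then applies this to $x_n$, then $x_{n-1}$, and so on, with the remaining variables riding along as inert parameters. Next I would factor this one-variable polarization into elementary "one step" pieces: for a polynomial $g$ of degree at most $k$ in $x$, introduce a fresh variable $y$ and set $T_k(g):=g+\tfrac1k(y-x)\,\partial_x g$; a direct computation shows $T_k(g)$ has degree $\le k-1$ in $x$, degree $\le 1$ in $y$, and $T_k(g)|_{y=x}=g$, so the composite $T_1\circ\cdots\circ T_d$ removes $x$, adjoins $d$ fresh variables, restricts to $q$ on their diagonal, and — after a short symmetry check — equals the polarization of $q$ in $x$ by the uniqueness in its definition. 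It then remains to prove the single statement that each $T_k$ preserves real stability as an operator adjoining $y$.

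That last statement is where the actual work lies. I would fix $y$, $x$, and the remaining variables in the open upper half plane $\UHP$ and view $g$ as a univariate polynomial in $x$ with the other variables specialized; real stability of $g$ forces the roots of this univariate polynomial into the closed lower half plane, say $g=c\prod_i(x-r_i)$ with $\Im r_i\le 0$. If $\partial_x g=0$ then $T_k(g)=g\neq0$; otherwise $T_k(g)$ is a nonconstant affine function of $y$ with unique root $y_0=x-k\,g/\partial_x g$, and the point is to show $\Im y_0\le 0$, so that $y_0\notin\UHP$. Using $\partial_x g/g=\sum_i 1/(x-r_i)$, this reduces to the estimate
\[
k\,\Im\!\left(\frac{g}{\partial_x g}\right)=\frac{k\sum_i \Im(x-r_i)/|x-r_i|^2}{\bigl|\sum_i 1/(x-r_i)\bigr|^2}\;\ge\;\Im x ,
\]
which follows from $\Im(x-r_i)\ge\Im x>0$ in the numerator and Cauchy--Schwarz (at most $k$ terms) in the denominator. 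Thus $\Im y_0=\Im x-k\,\Im(g/\partial_x g)\le 0$, and $T_k(g)$ is nonvanishing whenever all of its variables lie in $\UHP$, as desired.

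The main obstacle is precisely this estimate — realizing that the adjoined variable enters affinely, so that the whole question collapses to locating a single root, and then pinning down its imaginary part via the elementary fact $\Im(\partial_x g/g)<0$ on the upper half plane together with Cauchy--Schwarz. The only other point needing care is verifying that the iterated one-step operators genuinely reassemble into $\Pol^d$ (the symmetry of the composite), which is what licenses the reduction of the whole theorem to the preservation statement for a single $T_k$.
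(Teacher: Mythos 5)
Your proposal is correct, and it gives a genuinely self-contained proof where the paper simply cites Walsh (i.e.\ the Grace--Walsh--Szeg\H{o} coincidence theorem). The standard route behind that citation is: $\Pol^\lambda(p)$, viewed as a function of one variable block with the rest frozen, is symmetric and multiaffine, so the coincidence theorem lets one pull any upper--half--plane zero of $\Pol^\lambda(p)$ back to an upper--half--plane zero of $p$; the reverse direction is diagonalization, as you say. Your argument instead factors $\Pol^{\lambda_k}$ into the one--step operators $T_k(g)=g+\tfrac1k(y-x)\partial_x g$ and shows directly that each $T_k$ preserves stability by locating the unique root in the new affine variable $y$ and estimating its imaginary part via $\partial_x g/g=\sum_i(x-r_i)^{-1}$ together with Cauchy--Schwarz. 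This is more elementary (it uses nothing beyond the product representation of a univariate hyperbolic polynomial) and also has the pedagogical advantage of exhibiting polarization as a composition of very concrete first--order differential operators, each of whose stability preservation is a one--root problem; the Grace--Walsh--Szeg\H{o} route is shorter once one has that theorem as a black box, but proving it from scratch is comparable work. Two points you wave at would need to be filled in for a complete write-up: (i) the ``short symmetry check'' that $T_1\circ\cdots\circ T_d$ really is the symmetric polarization is a genuine (if routine) computation --- one verifies inductively on the monomial basis that after $j$ steps the result is $\sum_l\binom{d-j}{k-l}\binom{d}{k}^{-1}e_l(y_1,\dots,y_j)x^{k-l}$, which is symmetric at each stage and gives $\binom{d}{k}^{-1}e_k$ at $j=d$; (ii) the root--location argument tacitly assumes $g(\cdot,x_2,\dots,x_n)\not\equiv 0$ for the frozen values, which is handled in the usual way (the bad locus is a proper subvariety, and one finishes by Hurwitz or by perturbing $g$ to a strictly stable polynomial). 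Neither affects the substance.
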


Beyond these basic real stability preservers, various preservation results regarding different classes of operators have been proven over the past century or so. In 2008 many of these results were encapsulated and vastly generalized in the Borcea-Br\"and\'en characterization, which gives a useful equivalent condition for a linear operator to be a real stability preserver. We mentioned this result in the introduction, and now we present it formally. To that end, we first define the \emph{symbol of an operator}, a crucial concept to the rest of this paper.

\begin{definition}[Bounded-degree symbol] \label{symbbounded}
    Given a linear operator
    \[
        T: \R^\lambda[x_1,...,x_n] \to \R^\gamma[x_1,...,x_m],
    \]
    we define $\Symb^\lambda(T) \in \R^{(\lambda,\gamma)}[z_1,...,z_n,x_1,...,x_m]$ as follows, where $T$ acts only on the $x$ variables:
    \[
        \Symb^\lambda(T) := T[(1+xz)^\lambda] = \sum_{0 \leq \mu \leq \lambda} \binom{\lambda}{\mu} z^\mu T(x^\mu).
    \]
    We may simply write $\Symb(T)$ when $\lambda$ is clear from the context. (Note that this definition is slightly different from that of \cite{bb1}, but this difference is inconsequential.)
\end{definition}

The characterization then essentially says that $T$ preserves real stability if and only if $\Symb^\lambda(T)$ is real stable, with the exception of a certain degeneracy case. We state the full statement of the characterization, for bounded-degree operators.

\begin{theorem}[Borcea-Br\"and\'en] \label{bbbounded}
    Let $T: \R^\lambda[x_1,...,x_n] \to \R^\gamma[x_1,...,x_m]$ be a linear operator on polynomials. Then $T$ preserves real stability if and only if one of the following holds.
    \begin{enumerate}
        \item $\Symb^\lambda(T)$ is real stable.
        \item $\Symb^\lambda(T)(z_1,...,z_n,-x_1,...,-x_m)$ is real stable.
        \item The image of $T$ is of degree at most 2 and consists only of real stable polynomials.
    \end{enumerate}
\end{theorem}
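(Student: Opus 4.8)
The plan is to prove the two directions of the ``if and only if'' separately: that each of (1)--(3) forces $T$ to be a real stability preserver (sufficiency), and that a real stability preserver must satisfy one of them (necessity). For sufficiency, conditions (2) and (3) reduce to or sidestep (1). Condition (3) is immediate: if the image of $T$ consists only of real stable polynomials then $T(p)$ is real stable for every real stable $p$ by definition. Condition (2) reduces to (1) by precomposing with $p(x) \mapsto p(-x)$, which is itself a real stability preserver since, for a \emph{real} polynomial $p$, $p(-x)$ is real stable iff $p(x)$ is (complex conjugation symmetry); thus the operator $U(p)(x) := T(p)(-x)$ satisfies $\Symb^\lambda(U)(z,x) = \Symb^\lambda(T)(z,-x)$, so stability of $\Symb^\lambda(T)(z,-x)$ together with the $\lambda=(1^n)$-of-case-(1) argument applied to $U$ gives that $U$, hence $T$, preserves real stability.

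The core of sufficiency is therefore: $\Symb^\lambda(T)$ real stable $\Rightarrow$ $T$ preserves real stability. First I would reduce to operators on multiaffine polynomials (i.e. $\lambda = (1^n)$) via polarization. Writing $D$ for the diagonalization map $x_{k,j}\mapsto x_k$ (a stability preserver by Proposition \ref{rsclosure}), set $T^\uparrow := T \circ D$; then $T$ preserves real stability iff $T^\uparrow$ does --- forward by composition of preservers, reverse because $D(\Pol^\lambda(p)) = p$ and $\Pol^\lambda(p)$ is real stable whenever $p$ is (Proposition \ref{rspol}) --- and a direct computation identifies $\Symb(T^\uparrow)$ with the polarization of $\Symb^\lambda(T)$ in the $z$-variables, which is real stable iff $\Symb^\lambda(T)$ is (Proposition \ref{rspol} again). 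Now $T$ acts on multiaffine polynomials and $\Symb(T)(z,x) = \sum_{S \subseteq [n]} z^S\, T(x^S)$ is multiaffine in $z$. For a multiaffine stable $p(z) = \sum_S p_S z^S$ one has the identity
\[
    T(p)(x) = \bigl[\, p(\partial_z)\, \Symb(T)(z,x)\,\bigr]_{z = 0} = \bigl[\, g(-\partial_z)\, \Symb(T)(z,x)\,\bigr]_{z=0}, \qquad g(z) := p(-z),
\]
and since $g$ is real stable (conjugation symmetry once more), the Lieb--Sokal lemma --- applying $g(-\partial_z)$ of a stable $g$ to a polynomial stable and of degree at most $1$ in each $z_k$, then specializing $z=0$, yields a stable polynomial --- completes sufficiency.

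For necessity, the key test object is $Q(z,x) := \prod_k (1 - x_k z_k)^{\lambda_k}$, which (unlike $(1+xz)^\lambda$ itself) is \emph{jointly} real stable in $(z,x)$ because each factor $1 - x_k z_k$ is nonvanishing whenever $x_k$ and $z_k$ both lie in the upper half-plane; applying $T$ to $Q$ in the $x$-variables recovers the symbol up to a sign, $T[Q] = \Symb^\lambda(T)(-z, x)$. The engine is a Hermite--Biehler/Hurwitz argument of Borcea--Br\"and\'en showing that a real stability preserver $T$, extended $\C$-linearly and acting on the $x$-variables only, must either (a) preserve joint stability, or (b) send jointly stable polynomials to polynomials nonvanishing on (upper half-plane in $z$)$\,\times\,$(lower half-plane in $x$), or (c) have image of degree at most $2$. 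In case (a), $\Symb^\lambda(T)(-z,x)$ is jointly stable, and applying the real-stability-preserving substitution $(z,x)\mapsto(-z,-x)$ gives that $\Symb^\lambda(T)(z,-x)$ is stable --- condition (2). In case (b), substituting $x\mapsto -x$ and then $(z,x)\mapsto(-z,-x)$ gives that $\Symb^\lambda(T)$ itself is stable --- condition (1). Case (c) is condition (3), the image consisting of stable polynomials because $T$ is a preserver.

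The hard part will be the trichotomy (a)/(b)/(c) driving necessity: one must pass from ``preserves real stability'' to a statement about the $\C$-linear extension of $T$, which requires characterizing complex stability through a family of real stable polynomials in proper position (Hermite--Biehler) and invoking Hurwitz's theorem on limits of nonvanishing holomorphic functions, while carefully tracking the possibility that these limits vanish identically --- this is exactly where the degenerate degree-at-most-$2$ alternative is forced and why it cannot be dropped from the statement (and where the real-versus-complex gap, degree $2$ versus degree $1$, originates). The remaining substantive ingredients are the Lieb--Sokal lemma powering sufficiency and the identity $\Symb(T^\uparrow)=\Pol(\Symb^\lambda(T))$; the various sign computations are routine.
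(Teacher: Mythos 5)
The paper does not prove Theorem~\ref{bbbounded}; it is cited directly from Borcea--Br\"and\'en \cite{bb1}, so there is no in-paper proof to compare your attempt against. What you have written is a high-level outline of the \emph{original} Borcea--Br\"and\'en argument, and as an outline it is structurally accurate: the reduction of sufficiency to the multiaffine case via polarization and diagonalization, the identity $T(p)(x) = [p(\partial_z)\Symb(T)(z,x)]_{z=0}$, the use of Lieb--Sokal to conclude, the test polynomial $\prod_k(1-x_kz_k)^{\lambda_k}$ and the identity $T[Q]=\Symb^\lambda(T)(-z,x)$ for necessity, and the Hermite--Biehler/Hurwitz trichotomy with the degree-$2$ degeneracy arising from the real-versus-complex passage are all genuine ingredients of their proof, and the sign bookkeeping linking conditions (1) and (2) is handled correctly.

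That said, this is not a proof. The entire technical content of the necessity direction is compressed into the sentence invoking ``a Hermite--Biehler/Hurwitz argument of Borcea--Br\"and\'en showing that a real stability preserver $T$ \ldots must either (a), (b), or (c),'' which you then explicitly flag as ``the hard part.'' That trichotomy \emph{is} the theorem; stating it as a black box and then deriving the symbol conditions from it is essentially restating, not proving, the result. If the goal were to reproduce the proof, you would need to actually carry out the $\C$-linear extension of $T$, the Hermite--Biehler characterization of stability via pairs of real stable polynomials in proper position, and the Hurwitz limiting argument, together with the analysis of when the limit degenerates (which is precisely where the rank/degree-$2$ alternative is forced). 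One smaller point worth correcting: in your invocation of Lieb--Sokal you attach the degree-$\leq 1$ hypothesis to the polynomial being differentiated ($\Symb(T)(z,x)$), whereas the lemma requires it of the polynomial being converted into the differential operator (here $g$). After the polarization reduction both happen to be multiaffine in $z$, so the conclusion is unaffected, but the hypothesis as stated is aimed at the wrong factor and would mislead a reader trying to apply the lemma outside the multiaffine setting.
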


Notice that the above definition and result deal only with operators which only allow inputs up to a certain fixed degree. And this is important to note, as the symbol changes based upon which degree is being considered. For operators which do not inherently depend on some fixed maximum degree (e.g., the derivative), there is another symbol definition and characterization result.

Of course, the degree of the symbol above is the same as the maximum degree of the input polynomials. So if one were to define a symbol for operators with no bound on the input degree, it is likely that the symbol would not have a bound on its degree. This is where the \emph{Laguerre-P\'olya class} comes in. This is a class of entire functions in $\C^n$, defined as follows.

\begin{definition}
    A function $f$ is said to be in the $\mathcal{LP}$ (Laguerre-P\'olya) class in the variables $x_1,...,x_n$, if $f$ is the limit (uniformly on compact sets) of real stable polynomials in $\R[x_1,...,x_n]$. If $f$ is the limit of real stable polynomials in $\R_+[x_1,...,x_n]$, then we say $f$ is in the $\mathcal{LP}_+$ class. In these cases, we write $f \in \mathcal{LP}[x_1,...,x_n]$ and $f \in \mathcal{LP}_+[x_1,...,x_n]$ respectively.
\end{definition}

There are interesting equivalent definitions for these classes of functions (e.g., see \cite{craven1989jensen}), but we omit them here. With this class of functions we can state the Borcea-Br\"and\'en characterization for operators with no dependence on the degree of the input polynomial. First though we need to define the ``transcendental'' symbol.

\begin{definition}[Transcendental symbol] \label{symbtrans}
    Given a linear operator
    \[
        T: \R[x_1,...,x_n] \to \R[x_1,...,x_m],
    \]
    we define $\Symb^\infty(T)$ as a formal power series in $z_1,....,z_n$ (with polynomial coefficients in $x_1,...,x_m$) as follows, where $T$ acts only on the $x$ variables:
    \[
        \Symb^\infty(T) := T[e^{x \cdot z}] = \sum_{0 \leq \mu} \frac{1}{\mu!} z^\mu T(x^\mu).
    \]
\end{definition}

\begin{theorem}[Borcea-Br\"and\'en] \label{bbtrans}
    Let $T: \R[x_1,...,x_n] \to \R[x_1,...,x_m]$ be a linear operator on polynomials. Then $T$ preserves real stability if and only if one of the following holds.
    \begin{enumerate}
        \item $\Symb^\infty(T) \in \mathcal{LP}[z_1,....,z_n,x_1,...,x_m]$
        \item $\Symb^\infty(T)(z_1,...,z_n,-x_1,...,-x_m) \in \mathcal{LP}[z_1,....,z_n,x_1,...,x_m]$
        \item The image of $T$ is of degree at most 2 and consists only of real stable polynomials.
    \end{enumerate}
\end{theorem}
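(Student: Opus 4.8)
The plan is to deduce Theorem~\ref{bbtrans} from the bounded-degree characterization, Theorem~\ref{bbbounded}, by passing to limits of real stable polynomials (the route of \cite{bb1}); both implications come out this way. The bridge between the two symbols is the elementary computation that, writing $T_d$ for the restriction of $T$ to $\R^{(d^n)}[x_1,\ldots,x_n]$ (polynomials of degree at most $d$ in each variable) and $|\mu| := \mu_1+\cdots+\mu_n$,
\[
    \Symb^{(d^n)}(T_d)\!\left(\frac{z_1}{d},\ldots,\frac{z_n}{d},x\right) = \sum_{0\leq\mu\leq(d^n)}\binom{(d^n)}{\mu}\frac{z^\mu}{d^{|\mu|}}\,T(x^\mu) \;\longrightarrow\; \sum_{0\leq\mu}\frac{z^\mu}{\mu!}\,T(x^\mu) = \Symb^\infty(T)(z,x)
\]
as $d\to\infty$, where the convergence is coefficient-by-coefficient since $\binom{d}{\mu_k}/d^{\mu_k}\to 1/\mu_k!$. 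The left-hand side is a real stable polynomial whenever $\Symb^{(d^n)}(T_d)$ is, because positive scaling of variables preserves real stability (Proposition~\ref{rsclosure}).

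For the ``if'' direction, suppose first that $\Symb^\infty(T)\in\mathcal{LP}$. Writing $p(\partial_z) := \sum_\mu p_\mu\,\partial_{z_1}^{\mu_1}\cdots\partial_{z_n}^{\mu_n}$ for $p=\sum_\mu p_\mu x^\mu$, one has
\[
    T(p)(x) = \big[\,p(\partial_z)\,\Symb^\infty(T)(z,x)\,\big]\big|_{z=0}\qquad\text{for every }p\in\R[x_1,\ldots,x_n],
\]
which is immediate from $\Symb^\infty(T)(z,x)=\sum_\mu\frac{1}{\mu!}z^\mu T(x^\mu)$. I would then invoke the classical fact that for real stable $p$ the constant-coefficient operator $p(\partial_z)$ preserves real stability (this is itself a consequence of Theorem~\ref{bbbounded}, applied to $p(\partial_z)$ on each bounded-degree subspace), together with the facts that $\mathcal{LP}$ is closed under such differential operators and under specializing a variable to a real value; these give $p(\partial_z)\Symb^\infty(T)(z,x)\in\mathcal{LP}$ and hence $T(p)\in\mathcal{LP}$, so $T(p)$ is real stable. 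If instead $\Symb^\infty(T)(z,-x)\in\mathcal{LP}$, the same argument produces a real stable polynomial after the substitution $x\mapsto-x$, which preserves real stability for real polynomials; and the degenerate case is trivial.

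For the ``only if'' direction, assume $T$ preserves real stability, so each $T_d$ does, and by Theorem~\ref{bbbounded} one of its three alternatives holds for every $d$. If the degenerate alternative holds for all large $d$, then the image of $T$ (the increasing union of the images of the $T_d$) has degree at most $2$ and consists of real stable polynomials, and we are done. Otherwise the degenerate alternative fails for infinitely many $d$, so by pigeonhole either $\Symb^{(d^n)}(T_d)$ is real stable for infinitely many $d$, or its negation in the output variables is; restricting to such $d$ (and, in the second case, negating at the end), the rescaled symbols $\Symb^{(d^n)}(T_d)(z/d,x)$ are real stable polynomials converging coefficient-by-coefficient to $\Symb^\infty(T)$.

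The main obstacle --- and the only nonroutine step, granting Theorem~\ref{bbbounded} --- is to upgrade this coefficient-by-coefficient convergence to convergence uniformly on compact sets, which is exactly what is needed to conclude, via Hurwitz's theorem and the definition of $\mathcal{LP}$, that $\Symb^\infty(T)$ is a well-defined entire function lying in $\mathcal{LP}$. The key structural input is that the binomial coefficients built into the bounded-degree symbol are precisely the weights appearing in Newton's inequalities: restricting the real stable polynomial $\Symb^{(d^n)}(T_d)$ to a line $z = w\cdot v$ with $v$ a positive vector (keeping the output variables at a positive point) gives a real-rooted univariate polynomial in $w$ whose Newton inequalities, once the binomial weights cancel, become plain log-concavity of an averaged coefficient sequence. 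That log-concavity bounds the sequence geometrically, which forces the series defining $\Symb^\infty(T)$ to converge (and to be entire of exponential type); and in the case of nonnegative coefficients --- the one relevant to this paper's applications --- the same domination gives $|\Symb^{(d^n)}(T_d)(z/d,x)| \leq \Symb^\infty(T)(|z_1|,\ldots,|z_n|,|x_1|,\ldots,|x_m|) < \infty$ on every polydisc, uniformly in $d$. A normal-families argument then promotes coefficient-by-coefficient to locally uniform convergence, placing $\Symb^\infty(T)$ (respectively $\Symb^\infty(T)(z,-x)$) in $\mathcal{LP}$ and finishing the proof. I expect the general, possibly negative-coefficient, version of this uniform bound to be the most delicate point.
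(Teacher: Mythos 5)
This statement is the transcendental Borcea--Br\"and\'en characterization, which the paper does \emph{not} prove: it is a preliminary result cited from \cite{bb1} (their Theorem~1.3). So there is no in-paper proof to compare against; what I can do is assess your outline on its own terms.

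Your overall strategy --- deriving the $\Symb^\infty$ version from the bounded-degree one (Theorem~\ref{bbbounded}) by passing to the limit of rescaled bounded symbols --- is exactly the route Borcea and Br\"and\'en take, and most of the pieces you lay out are sound. The computation that $\Symb^{(d^n)}(T_d)(z/d,x)$ converges coefficient-wise to $\Symb^\infty(T)(z,x)$ is correct, the identity $T(p)(x)=\bigl[p(\partial_z)\Symb^\infty(T)(z,x)\bigr]\big|_{z=0}$ is correct, and the pigeonhole on the three alternatives across $d$ handles the ``only if'' direction cleanly. Two points deserve flags. First, in the ``if'' direction you cite as a consequence of Theorem~\ref{bbbounded} the fact that $p(\partial_z)$ preserves real stability for real stable $p$; this is true but not a one-line reading of the symbol $\Symb^\lambda(p(\partial_z))=\sum_\nu p_\nu(\lambda)_\nu w^\nu(1+zw)^{\lambda-\nu}$, whose stability is itself a nontrivial (if classical, Hermite--Poulain/Lieb--Sokal) fact; and you additionally need that the $\mathcal{LP}$ class is closed under such operators, which is again a limiting statement of the same type you are trying to establish --- so there is a risk of circularity unless you carefully route through Theorem~\ref{bbbounded} only. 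Second, and more seriously, the gap you flag yourself is real and not small: your Newton/Tur\'an argument gives the geometric bound $|c_\mu|\leq |c_0|(|c_{e_1}|/|c_0|)^{\mu_1}\cdots$ only when the $c_\mu$ are nonnegative, because the inequality $c_\mu^2\geq c_{\mu-e_k}c_{\mu+e_k}$ controls $|c_\mu|$ only in the one-signed case. But $\mathcal{LP}$ (as opposed to $\mathcal{LP}_+$) contains functions of order $2$ with alternating-sign coefficients (e.g., $e^{-z^2}$), for which this estimate gives nothing. Borcea--Br\"and\'en handle this through their Theorem~5.1/Proposition~5.2, which is a separate multivariate P\'olya--Schur-type characterization of $\mathcal{LP}$ by its ``Jensen polynomials'' together with a normal-families/Montel argument tailored to the order-$\leq 2$ setting; your sketch would need to import something of that strength for the signed case, rather than only the log-concavity bound. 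So the proposal is a correct strategy with an honest, but genuine, hole precisely where you say it is; as written it only establishes the nonnegative-coefficient sub-case.
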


\subsection{Capacity} \label{capss}

Recall the definition of capacity:
\[
    \cpc_\alpha(p) := \inf_{x > 0} \frac{p(x)}{x^\alpha}
\]
In general, the  conceptual meaning of capacity is not completely understood. However, in this section we hope to illuminate some of its basic features. This will include its connections to the coefficients of a polynomial, to probabilistic interpretations of polynomials, to the AM-GM inequality, and to the Legendre (Fenchel) transformation.

As discussed in the introduction, the sort of capacity results we will be interested in are those of capacity preservation (that is, bounds on how much the capacity can change under various operations). In fact, our use of the Borcea-Br\"and\'en characterization consists in combining it with capacity bounds in order to give something like a characterization of capacity preservers. This can be seen as an analytic refinement of the characterization: not only do such operators preserve stability, but they also preserve capacity. That said, we now state a few basic properties and interpretations of capacity that will be needed to state and discuss this analytic refinement. First recall the definitions of the Newton polytope and the support of a polynomial.

\begin{definition}
    Given $p \in \R[x_1,...,x_n]$, the \emph{Newton polytope} of $p$, denoted $\Newt(p)$, is the convex hull of the support of $p$. The \emph{support} of $p$, denoted $\supp(p)$, is the set of all $\mu \in \Z_+^n$ such that $x^\mu$ has a non-zero coefficient in $p$.
\end{definition}

Capacity is perhaps most basically understood as a quantity which mediates between the coefficients of $p$ and the evaluations of $p$. For example, if $\mu \in \supp(p)$ then:
\[
    p_\mu \leq \cpc_\mu(p) \leq p(1,...,1).
\]
Capacity can also be understood probabilistically. If $p \in \R_+^{(1^n)}[x_1,...,x_n]$ and $p(1,...,1) = 1$, then $p$ can be considered as the probability generating function for some discrete distribution on $\supp(p)$. In this case, a simple proof demonstrates:

\begin{fact} \label{fact:marginals}
    Let $p \in \R_+^{(1^n)}[x_1,...,x_n]$ be the probability generating function for some distribution $\nu$. Then:
    \begin{enumerate}
        \item $0 \leq \cpc_\alpha(p) \leq 1$ for all $\alpha \in \R_+^n$.
        \item $\cpc_\alpha(p) = 1$ if and only if $\alpha$ is the vector of marginal probabilities of $\nu$.
    \end{enumerate}
\end{fact}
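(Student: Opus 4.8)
The plan is to dispatch part (1) directly, and to handle part (2) via the weighted AM--GM inequality for the ``if'' direction and a first-order optimality argument at the all-ones point for the ``only if'' direction. For part (1): since $p$ has non-negative coefficients and the infimum defining $\cpc_\alpha$ ranges over $x > 0$, every quotient $p(x)/x^\alpha$ is non-negative, so $\cpc_\alpha(p) \geq 0$; evaluating at $x = (1^n)$ gives $p(1^n)/(1^n)^\alpha = 1$ (using that $p$, being a probability generating function, satisfies $p(1^n)=1$), so $\cpc_\alpha(p) \leq 1$.

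For part (2), write $p(x) = \sum_{S \subseteq \{1,\dots,n\}} \nu(S)\, x^S$, where $x^S := \prod_{k \in S} x_k$ and $\nu$ is a probability distribution on subsets of $\{1,\dots,n\}$; this is exactly what it means for $p \in \R_+^{(1^n)}[x_1,\dots,x_n]$ with $p(1^n)=1$ to be the PGF of $\nu$. Let $m_k := \sum_{S \ni k} \nu(S)$ denote the $k$-th marginal of $\nu$, and note that $m_k = \partial_{x_k} p(1^n)$ since $p$ is multiaffine. For the ``if'' direction, assume $\alpha = (m_1,\dots,m_n)$. Fix $x > 0$ and apply the weighted AM--GM inequality to the values $\{x^S : \nu(S) > 0\}$ with weights $\{\nu(S)\}$ (which sum to $1$): this gives $p(x) = \sum_S \nu(S)\, x^S \geq \prod_S (x^S)^{\nu(S)} = x^{\alpha}$, where the exponent in coordinate $k$ is $\sum_{S \ni k}\nu(S) = m_k = \alpha_k$. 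Hence $p(x)/x^\alpha \geq 1$ for every $x > 0$, so $\cpc_\alpha(p) \geq 1$, and with part (1) this forces $\cpc_\alpha(p) = 1$.

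For the ``only if'' direction, suppose $\cpc_\alpha(p) = 1$. Since $p(1^n)/(1^n)^\alpha = 1$ already equals this infimum, the point $(1^n)$ is a global minimizer of $f(x) := p(x)/x^\alpha$ over the open set $\R_{++}^n$. Because $p(1^n) = 1 \neq 0$, the function $\log f(x) = \log p(x) - \sum_k \alpha_k \log x_k$ is smooth near $(1^n)$, so its gradient must vanish there: $0 = \partial_{x_k}\log f(1^n) = \partial_{x_k} p(1^n)/p(1^n) - \alpha_k = m_k - \alpha_k$ for each $k$. Thus $\alpha_k = m_k$ for all $k$, i.e.\ $\alpha$ is the vector of marginal probabilities of $\nu$.

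I do not expect a genuine obstacle here; the only points requiring a bit of care are the standard fact that an interior minimizer of a differentiable function is a critical point, and checking that the weighted AM--GM step remains valid when some $\nu(S)$ vanish (those terms simply drop out). The one conceptually useful observation is that the inequality $p(x) \geq x^{(\text{marginals})}$ underlying the ``if'' direction is precisely AM--GM applied to the PGF, which is exactly the connection to the AM--GM inequality alluded to above.
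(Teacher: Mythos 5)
Your proof is correct and follows essentially the same route as the paper's (which gives only a sketch). The paper cites concavity of $\log$ for the ``if'' direction -- this is exactly your weighted AM--GM step, since AM--GM is Jensen's inequality for $\log$ -- and cites the observation that $\cpc_\alpha(p)=1$ forces the infimum to be attained at the all-ones vector for the ``only if'' direction, which you make precise via the vanishing-gradient condition for an interior minimizer. Yours is just a complete write-up of the paper's two-line sketch.
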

\begin{proof}
    $(1)$ is straightforward, and $(2)$ follows from concavity of $\log$ (e.g., see \cite{gurvits2008van}, Fact 2.2) and the fact that $\cpc_\alpha(p) = 1$ implies $\frac{p(x)}{x^\alpha}$ is minimized at the all-ones vector.
\end{proof}

The following ``log-exponential polynomial'' associated to $p$ has some nice properties which often makes it convenient to use in the context of capacity. These properties also shed light on the potential connection between capacity, convexity, and the Legendre transformation (consider the expressions which show up in Fact \ref{convexcap} below).


\begin{definition}
    Given a polynomial $p \in \R_+[x_1,...,x_n]$, we let capitalized $P$ denote the following function:
    \[
        P(x) := \log(p(\exp(x))) = \log \sum_\mu p_\mu e^{\mu \cdot x}.
    \]
\end{definition}

\begin{fact}\label{convexcap}
    Given $p \in \R_+[x_1,...,x_n]$, consider $P$ as defined above. We have:
    \begin{enumerate}
        \item $\cpc_\alpha(p) = \exp \inf_{x \in \R^n} (P(x) - \alpha \cdot x)$
        \item $P(x) - \alpha \cdot x$ is convex in $\R^n$ for any $\alpha \in \R^n$.
    \end{enumerate}
\end{fact}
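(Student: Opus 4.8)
Let me sketch the proof. The statement has two parts, and they're closely related: part (1) is really just unwinding the definition of capacity via the substitution $x_k = e^{y_k}$, while part (2) is a convexity assertion about $P(y) - \alpha\cdot y$. Since $-\alpha\cdot y$ is linear, part (2) reduces entirely to showing $P(y) = \log\left(\sum_\mu p_\mu e^{\mu\cdot y}\right)$ is convex on $\R^n$.

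The plan is to prove part (2) first, since it's the substantive claim. The function $P$ is a \emph{log-sum-exp} of affine functions $y \mapsto \log p_\mu + \mu\cdot y$ (the terms with $p_\mu = 0$ are simply omitted; since $p$ has non-negative coefficients there are no sign issues). Convexity of log-sum-exp is standard, but let me indicate the cleanest route: it follows from Hölder's inequality. Given $y, y' \in \R^n$ and $t \in [0,1]$, write
\[
    p\left(\exp(ty + (1-t)y')\right) = \sum_\mu \left(p_\mu e^{\mu\cdot y}\right)^t \left(p_\mu e^{\mu\cdot y'}\right)^{1-t} \leq \left(\sum_\mu p_\mu e^{\mu\cdot y}\right)^t \left(\sum_\mu p_\mu e^{\mu\cdot y'}\right)^{1-t},
\]
where the inequality is Hölder with conjugate exponents $1/t$ and $1/(1-t)$ (and the degenerate cases $t \in \{0,1\}$ are trivial). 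Taking $\log$ of both sides gives $P(ty + (1-t)y') \leq tP(y) + (1-t)P(y')$, which is exactly midpoint-style convexity along the segment; since $P$ is continuous (it is a finite sum of exponentials composed with $\log$, and $p(\exp(y)) > 0$ everywhere as all $p_\mu \geq 0$ and at least one is positive), this yields convexity of $P$. Adding the linear term $-\alpha\cdot y$ preserves convexity, proving (2).

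For part (1), I would simply compute: making the change of variables $x = \exp(y)$, which is a bijection from $\R^n$ onto $\R_{++}^n$, we have $x^\alpha = e^{\alpha\cdot y}$ and hence
\[
    \cpc_\alpha(p) = \inf_{x > 0} \frac{p(x)}{x^\alpha} = \inf_{y \in \R^n} \frac{p(\exp(y))}{e^{\alpha\cdot y}} = \inf_{y\in\R^n} \exp\left(P(y) - \alpha\cdot y\right) = \exp\left(\inf_{y\in\R^n}\left(P(y) - \alpha\cdot y\right)\right),
\]
where the last equality uses monotonicity and continuity of $\exp$. (One should note the infimum may be $-\infty$ inside the exponential, in which case $\cpc_\alpha(p) = 0$; this matches the convention that the capacity is $0$ when it is not attained, and is consistent with the earlier remark that $p_\mu \le \cpc_\mu(p)$ only for $\mu \in \supp(p)$.)

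I don't expect a genuine obstacle here — this is a foundational lemma. The only point requiring a little care is being honest about the cases where the infimum is not finite/attained, and making sure the Hölder step is stated with the right exponents; everything else is bookkeeping.
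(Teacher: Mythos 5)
Your proof is correct, and it matches the paper's approach: the paper states Fact \ref{convexcap} without an immediate proof, but in \S\ref{continuity} it proves the convexity claim for log-generating functions via exactly the Hölder's inequality argument you spell out, while part (1) is the routine substitution $x = \exp(y)$ that you give.
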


The next result is essentially a corollary of the AM-GM inequality. In a sense, this inequality is the foundational result that makes the notion of capacity so useful. Because of this we provide a partial proof of the following result, taken from \cite{anari2017generalization}.

\begin{fact}\label{newtoncap}
    For $p \in \R_+[x_1,...,x_n]$, $P$ defined as above, and $\alpha \in \R^n_+$, the following are equivalent.
    \begin{enumerate}
        \item $\alpha \in \Newt(p)$
        \item $\cpc_\alpha(p) > 0$
        \item $P(x) - \alpha \cdot x$ is bounded below.
    \end{enumerate}
\end{fact}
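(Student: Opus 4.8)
The plan is to dispose of the easy equivalence first and then isolate the genuine content. Parts $(2)$ and $(3)$ are equivalent essentially for free: by Fact \ref{convexcap}$(1)$ we have $\cpc_\alpha(p) = \exp\inf_{x\in\R^n}\bigl(P(x)-\alpha\cdot x\bigr)$, and since $\exp$ is strictly increasing with $\exp(-\infty)=0$, the infimum is finite (equivalently, $P(x)-\alpha\cdot x$ is bounded below) if and only if $\cpc_\alpha(p)>0$. So it remains to connect the Newton polytope to either of these, and I would do this by proving $(1)\Rightarrow(2)$ and $\lnot(1)\Rightarrow\lnot(3)$; together with $(2)\Leftrightarrow(3)$ this closes the cycle. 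Throughout I assume $p\not\equiv 0$, the case $p\equiv 0$ being degenerate ($\Newt(p)=\varnothing$, $\cpc_\alpha(p)=0$, and $P$ undefined).

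For $(1)\Rightarrow(2)$, which is the heart of the statement, I would use the weighted AM--GM inequality. Suppose $\alpha\in\Newt(p)$, so $\alpha=\sum_i t_i\mu^{(i)}$ with $\mu^{(i)}\in\supp(p)$, $t_i>0$, and $\sum_i t_i=1$ (discard any weight-zero terms; $\alpha$ still lies in the convex hull of the remaining support points). Then for every $x>0$,
\[
    p(x)\;\ge\;\sum_i p_{\mu^{(i)}}\,x^{\mu^{(i)}}
    \;=\;\sum_i t_i\cdot\frac{p_{\mu^{(i)}}\,x^{\mu^{(i)}}}{t_i}
    \;\ge\;\prod_i\left(\frac{p_{\mu^{(i)}}}{t_i}\right)^{t_i} x^{\sum_i t_i\mu^{(i)}}
    \;=\;C\,x^{\alpha},
\]
where $C:=\prod_i (p_{\mu^{(i)}}/t_i)^{t_i}>0$ since each $p_{\mu^{(i)}}>0$. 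Hence $p(x)/x^{\alpha}\ge C$ for all $x>0$, so $\cpc_\alpha(p)\ge C>0$; this also records a clean explicit lower bound.

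For $\lnot(1)\Rightarrow\lnot(3)$ I would use a separating hyperplane. If $\alpha\notin\Newt(p)$, then since $\Newt(p)$ is a nonempty compact convex set, there is $c\in\R^n$ with $c\cdot\mu<c\cdot\alpha$ for all $\mu\in\supp(p)$; set $\delta:=\min_{\mu\in\supp(p)}(c\cdot\alpha-c\cdot\mu)>0$. Evaluating along the ray $x=tc$ for $t>0$, and using $\sum_\mu p_\mu = p(1,\dots,1)$,
\[
    P(tc)-\alpha\cdot(tc)=\log\!\left(\sum_{\mu}p_\mu\,e^{\,t(c\cdot\mu-c\cdot\alpha)}\right)\le\log\!\left(p(1,\dots,1)\,e^{-\delta t}\right)=\log p(1,\dots,1)-\delta t,
\]
which tends to $-\infty$ as $t\to\infty$. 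Thus $P(x)-\alpha\cdot x$ is not bounded below, establishing $\lnot(3)$.

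I do not expect a serious obstacle: the only points requiring care are the standing assumption $p\not\equiv 0$ and ensuring the weights in the AM--GM step are strictly positive (handled by discarding zero weights). The weighted AM--GM computation above is exactly the ``foundational'' inequality the surrounding text alludes to, and the rest is bookkeeping together with the standard strict separation theorem for a point outside a compact convex set.
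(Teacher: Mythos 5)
Your proposal is correct and follows the same structure as the paper's argument: $(2)\Leftrightarrow(3)$ via Fact \ref{convexcap}, $(1)\Rightarrow(2)$ via the identical weighted AM--GM computation, and the remaining direction via a separating hyperplane for $\alpha\notin\Newt(p)$. The only difference is that the paper merely sketches the separating-hyperplane direction and defers the details to Fact 2.18 of \cite{anari2017generalization}, whereas you spell it out explicitly (and correctly) by letting $x=tc$ and sending $t\to\infty$.
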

\begin{proof}
    That $(2) \Leftrightarrow (3)$ follows from the previous fact. We now prove $(1) \Rightarrow (2)$. The $(2) \Rightarrow (1)$ direction also has a short proof, based on a separating hyperplane for $\alpha$ and $\Newt(p)$ whenever $\alpha \not\in \Newt(p)$. The details can be found in Fact 2.18 of \cite{anari2017generalization}.
    
    Suppose that $\alpha \in \Newt(p)$. So, $\alpha = \sum_{\mu \in S} c_\mu \mu$, where $S \subset \supp(p)$, $c_\mu > 0$, and $\sum_{\mu \in S} c_\mu = 1$. Using the AM-GM inequality and the fact that the coefficients of $p$ are non-negative, we have the following for $x \in \R_+^n$:
    \[
        p(x) \geq \sum_{\mu \in S} p_\mu x^\mu = \sum_{\mu \in S} c_\mu \frac{p_\mu x^\mu}{c_\mu} \geq \prod_{\mu \in S} \left(\frac{p_\mu x^\mu}{c_\mu}\right)^{c_\mu} = x^\alpha \prod_{\mu \in S} \left(\frac{p_\mu}{c_\mu}\right)^{c_\mu}.
    \]
    This then implies:
    \[
        \cpc_\alpha(p) = \inf_{x > 0} \frac{p(x)}{x^\alpha} \geq \prod_{\mu \in S} \left(\frac{p_\mu}{c_\mu}\right)^{c_\mu} > 0.
    \]
    
\end{proof}

Due to the previous result, we will only ever consider values of $\alpha$ which are in the Newton polytope of the relevant polynomials. Other $\alpha$ can be considered but most results will then become trivial. That said, we will often make this assumption about $\alpha$ without explicitly stating it.

The next result emulates Proposition \ref{rsclosure} (the basic real stability preservers) by giving a collection of basic capacity preserving operators. Note that these results are either equalities, or give something of the form $\cpc(T(p)) \geq c_T \cdot \cpc(p)$ for various operators $T$.


\begin{proposition}[Basic capacity preservers] \label{capclosure}
    For $p,q \in \R^+_\lambda[x_1,...,x_n]$ and $\alpha,\beta \in \R_+^n$, we have:
    \begin{enumerate}
        \item Scaling: $\cpc_\alpha(bp) = b \cdot \cpc_\alpha(p)$ for $b \in \R_+$
        \item Product: $\cpc_{\alpha + \beta}(pq) \geq \cpc_\alpha(p) \cpc_\beta(q)$
        \item Disjoint product: $\cpc_{(\alpha,\beta)}(p(x)q(z)) = \cpc_\alpha(p) \cpc_\beta(q)$
        \item Evaluation: $\cpc_{(\alpha_1,...,\alpha_{n-1})}(p(x_1,...,x_{n-1}, y_n)) \geq y_n^{\alpha_n} \cpc_\alpha(p)$ for $y_n \in \R_+$
        \item External field: $\cpc_\alpha(p(cx)) = c^\alpha \cpc_\alpha(p)$ for $c \in \R_+^n$
        \item Inversion: $\cpc_{(\lambda-\alpha)}(x^\lambda p(x_1^{-1}, ..., x_n^{-1})) = \cpc_\alpha(p)$
        \item Concavity: $\cpc_\alpha(bp + cq) \geq b \cdot \cpc_\alpha(p) + c \cdot \cpc_\alpha(q)$ for $b,c \in \R_+$
        \item Diagonalization: $\cpc_{\sum \alpha_k}(p(x,...,x)) \geq \cpc_\alpha(p)$
        \item Symmetric diagonalization: $\cpc_{n \cdot \alpha_0}(p(x,...,x)) = \cpc_\alpha(p)$ if $\alpha = (\alpha_0,...,\alpha_0)$ and $p$ is symmetric
        \item Homogenization: $\cpc_{(\alpha, \lambda-\alpha)}(\Hmg_\lambda(p)) = \cpc_\alpha(p)$
    \end{enumerate}
\end{proposition}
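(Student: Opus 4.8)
The plan is to verify each item essentially from the definition $\cpc_\alpha(p) = \inf_{x>0} p(x)/x^\alpha$, exploiting the fact that the infimum is over a multiplicative group (the positive orthant under coordinatewise multiplication) and that $x \mapsto x^\alpha$ is a homomorphism from this group to $\R_{++}$. Items (1), (5), and (6) are pure substitution: for scaling, $b p(x)/x^\alpha = b \cdot (p(x)/x^\alpha)$; for external field, substituting $x \mapsto cx$ and writing $(cx)^\alpha = c^\alpha x^\alpha$ turns $\cpc_\alpha(p(cx))$ into $c^\alpha$ times a reindexed infimum over the same orthant; for inversion, substituting $x \mapsto x^{-1}$ (a bijection of $\R_{++}^n$) sends $x^\lambda p(x^{-1})/x^{\lambda-\alpha}$ to $p(x^{-1})/x^{-\alpha} = p(y)/y^\alpha$ with $y = x^{-1}$. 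Item (7) (concavity) and item (2)/(3) (products) follow from the fact that an infimum of a sum is at least the sum of infima, and an infimum of a product of nonnegative functions is at least the product of infima; for the disjoint product (3) the variables separate, so the two infima decouple and equality holds. Item (4) (evaluation) follows by restricting the infimum defining $\cpc_\alpha(p)$ to the slice $x_n = y_n$: on that slice $p(x)/x^\alpha = \big(p(x_1,\dots,x_{n-1},y_n)/(x_1\cdots x_{n-1})^{(\alpha_1,\dots,\alpha_{n-1})}\big)\cdot y_n^{-\alpha_n}$, so $\cpc_\alpha(p) \le y_n^{-\alpha_n}\cpc_{(\alpha_1,\dots,\alpha_{n-1})}(p(\cdot,y_n))$, which rearranges to the stated inequality.

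For item (8) (diagonalization), setting all variables equal to a single $t>0$ and using $p(t,\dots,t)/t^{\sum\alpha_k} = p(x)/x^\alpha\big|_{x=(t,\dots,t)}$ shows $\cpc_{\sum\alpha_k}(p(x,\dots,x))$ is an infimum over the diagonal sub-orthant $\{(t,\dots,t)\}$, hence at least the infimum over the full orthant, which is $\cpc_\alpha(p)$. Item (9) is the reverse bound in the symmetric case: here one wants equality, and the point is that when $p$ is symmetric and $\alpha=(\alpha_0,\dots,\alpha_0)$, the function $p(x)/x^\alpha$ is symmetric and log-convex in the $\log x$ coordinates (by Fact \ref{convexcap}(2)), so its minimum over $\R^n$ can be taken on the diagonal $x_1=\cdots=x_n$ — averaging any minimizer over $S_n$ and using convexity of $P(x)-\alpha\cdot x$ (Jensen) produces a diagonal point that is still a minimizer. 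Item (10) (homogenization): writing $\Hmg_\lambda(p)(x,z) = z^\lambda p(x_1/z_1,\dots,x_n/z_n)$ (the standard homogenization to degree $\lambda_k$ in the pair $(x_k,z_k)$), the ratio $\Hmg_\lambda(p)(x,z)/(x^\alpha z^{\lambda-\alpha})$ equals $p(y)/y^\alpha$ after the substitution $y_k = x_k/z_k$; since $(x,z)\mapsto x/z$ maps $\R_{++}^{2n}$ onto $\R_{++}^n$ (and one checks the $z^\lambda$ and $z^{\lambda-\alpha}$ factors cancel the $z$-dependence exactly), the two infima agree.

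The main obstacle is item (9): items (1)–(8) and (10) are one-line substitutions or elementary infimum manipulations, but (9) genuinely requires the convexity from Fact \ref{convexcap} plus a symmetrization argument to push the minimizer onto the diagonal. The one subtlety to handle carefully there is whether a minimizer of $P(x)-\alpha\cdot x$ actually exists (it need only be an infimum): this is fine because we only need, for every $\varepsilon>0$, a near-minimizer, which we symmetrize to a near-minimizer on the diagonal via Jensen, giving $\cpc_{n\alpha_0}(p(x,\dots,x)) \le \cpc_\alpha(p)$; combined with (8) this yields equality. (Alternatively one invokes, as in \cite{gurvits2008van}, that for $\alpha\in\Newt(p)$ and $p\in\R_+$ the infimum is attained, but the symmetrization-of-near-minimizers route avoids needing that.) I would present (1), (5), (6), (3), (10) first as the "substitution" group, then (2), (4), (7), (8) as the "infimum inequality" group, and finish with (9) using the log-convexity input.
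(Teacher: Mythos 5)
Your proposal is correct, and you correctly identify item~(9) as the only nontrivial one (the paper's proof explicitly dispatches the rest as trivial and only argues item~(9)). The one point of genuine difference is the route you take for~(9): the paper symmetrizes $p$ and applies the AM--GM inequality to each monomial $S(x^\mu)$, which directly exhibits, for any given $x\in\R_{++}^n$, the diagonal point $y=(x_1\cdots x_n)^{1/n}$ satisfying $p(x)/x^\alpha \geq p(y,\dots,y)/y^{n\alpha_0}$. You instead invoke the log-convexity of $P(u)-\alpha\cdot u$ from Fact~\ref{convexcap}(2) and symmetrize a near-minimizer over $S_n$ via Jensen. These are really the same argument seen from two angles: averaging $\log x$ over $S_n$ produces exactly the point $\log y$ with $y=(x_1\cdots x_n)^{1/n}$, and the convexity you invoke is proved (in \S\ref{continuity}) by H\"older, of which AM--GM is the underlying special case. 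The paper's version is marginally more self-contained (it does not lean on Fact~\ref{convexcap}); yours is marginally more conceptual, makes the role of convexity explicit, and your remark about sidestepping the existence of a true minimizer by working with near-minimizers is a careful and correct touch. Either presentation would serve. Your treatment of the remaining items, in particular the homogenization~(10) via the change of variables $y_k=x_k/z_k$, matches what the paper leaves implicit.
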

\begin{proof}
    Symmetric diagonalization is the only nontrivial property, and it is a consequence of the AM-GM inequality. First of all, we automatically have (the diagonalization inequality):
    \[
        \cpc_{n \cdot \alpha_0}(p(x,...,x)) = \inf_{x > 0} \frac{p(x,...,x)}{x^{\alpha_0} \cdots x^{\alpha_0}} \geq \inf_{x > 0} \frac{p(x_1,...,x_n)}{x_1^{\alpha_0} \cdots x_n^{\alpha_0}} = \cpc_\alpha(p).
    \]
    For the other direction, fix $x \in \R_+^n$ and let $y := (x_1 \cdots x_n)^{1/n}$. Further, let $S(p)$ denote the symmetrization of $p$. For any $\mu \in \Z_+^n$, the AM-GM inequality gives:
    \[
    \begin{split}
        S(x^\mu) &= \frac{1}{n!} \sum_{\sigma \in S_n} x_{\sigma(1)}^{\mu_1} \cdots x_{\sigma(n)}^{\mu_n} \\
            &\geq \left(\prod_{\sigma \in S_n} x_{\sigma(1)}^{\mu_1} \cdots x_{\sigma(n)}^{\mu_n}\right)^{1/n!} \\
            &= \left(\prod_{j,k} x_j^{\mu_k}\right)^{1/n} = y^{\mu_1} \cdots y^{\mu_n}.
    \end{split}
    \]
    Additionally, $x^\alpha = x_1^{\alpha_0} \cdots x_n^{\alpha_0} = y^{n \cdot \alpha_0}$. Since $p$ is symmetric, we then have the following:
    \[
        \frac{p(x)}{x^\alpha} = \frac{S(p)(x)}{x^\alpha} = \sum_{\mu \in \supp(p)} p_\mu \frac{S(x^\mu)}{x^\alpha} \geq \sum_{\mu \in \supp(p)} p_\mu \frac{y^{\mu_1} \cdots y^{\mu_n}}{y^{n \cdot \alpha_0}} = \frac{p(y,...,y)}{y^{n \cdot \alpha_0}}.
    \]
    That is, for any $x \in \R_+^n$, there is a $y \in \R_+$ such that $\frac{p(x)}{x^\alpha} \geq \frac{p(y,...,y)}{y^{n \cdot \alpha_0}}$. Therefore:
    \[
        \cpc_\alpha(p) \geq \cpc_{n \cdot \alpha_0}(p(x,...,x)).
    \]
    This completes the proof.
\end{proof}

Many of these operations are similar to those that preserve real stability. This is to be expected, as we hope to combine the two theories. In this vein, we now discuss the capacity preservation properties of the polarization operator. As it does for real stability preservers, polarization will play a crucial role in working out the theory of capacity preservers. To state this result, we define the \emph{polarization of the vector} $\alpha$ as follows, where each value $\frac{\alpha_k}{\lambda_k}$ shows up $\lambda_k$ times:
\[
    \Pol^\lambda(\alpha) := \left(\frac{\alpha_1}{\lambda_1}, ..., \frac{\alpha_1}{\lambda_1}, \frac{\alpha_2}{\lambda_2}, ..., \frac{\alpha_2}{\lambda_2}, ..., \frac{\alpha_n}{\lambda_n}, ..., \frac{\alpha_n}{\lambda_n}\right).
\]

\begin{proposition} \label{cappol}
    Given $p \in \R_+^\lambda[x_1,...,x_n]$ and $\alpha \in \R_+^n$, we have that $\cpc_{\Pol^\lambda(\alpha)}(\Pol^\lambda(p)) = \cpc_\alpha(p)$.
\end{proposition}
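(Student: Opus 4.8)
The plan is to reduce to the case of polarizing a single variable, and then run essentially the same AM--GM argument that proves symmetric diagonalization in Proposition~\ref{capclosure}. Since $\Pol^\lambda$ is the composition $\Pol^{\lambda_1}\circ\cdots\circ\Pol^{\lambda_n}$ and the vector $\Pol^\lambda(\alpha)$ is assembled block by block in exactly the matching way, it suffices to prove the following one-variable statement: if $r$ has non-negative coefficients and degree at most $d$ in a variable $x_k$, and $\tilde r := \Pol^{d}(r)$ denotes its polarization in $x_k$ (a polynomial symmetric and multilinear in the new block $x_{k,1},\dots,x_{k,d}$, still with non-negative coefficients), then replacing the weight $\alpha_k$ by the length-$d$ block $(\alpha_k/d,\dots,\alpha_k/d)$ and leaving the other weights unchanged preserves capacity. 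Applying this once for each of the $n$ variables then yields the proposition; because the single-variable claim is stated for an arbitrary $r$, no compatibility between successive polarizations needs to be tracked.

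For the inequality ``$\le$'' I would restrict $\tilde r$ to the diagonal $x_{k,1}=\cdots=x_{k,d}$ of the new block: by definition of polarization this recovers $r$, and on that diagonal $\prod_j x_{k,j}^{\alpha_k/d}=x_k^{\alpha_k}$, so the infimum of $\tilde r(x)/x^{\Pol^{d}(\alpha)}$ over the diagonal is exactly $\cpc_\alpha(r)$; since an infimum over a subset only increases, $\cpc_{\Pol^{d}(\alpha)}(\tilde r)\le \cpc_\alpha(r)$. For ``$\ge$'', I would fix an arbitrary positive point, set $y$ to be the geometric mean of the new block's coordinates, and use that $\tilde r$, being symmetric and multilinear in that block, is a combination $\tilde r=\sum_{j=0}^{d} c_j\, e_j(x_{k,1},\dots,x_{k,d})$ of the elementary symmetric polynomials of the block, where each $c_j$ is a polynomial in the remaining variables with non-negative coefficients. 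A quick AM--GM computation on the $\binom{d}{j}$ monomials of $e_j$ (each block variable occurs in $\binom{d-1}{j-1}$ of them, so their geometric mean is $(x_{k,1}\cdots x_{k,d})^{j/d}=y^j$) gives $e_j\ge\binom{d}{j}y^j$; multiplying by $c_j\ge 0$ and summing shows $\tilde r$ at the chosen point is at least $\tilde r$ with every block coordinate set to $y$, which is $r$ with $x_k\mapsto y$. Combined with $\prod_j x_{k,j}^{\alpha_k/d}=y^{\alpha_k}$, this bounds $\tilde r(x)/x^{\Pol^{d}(\alpha)}$ below by $r(\dots)/(\cdots)\ge\cpc_\alpha(r)$, and taking the infimum over all positive points completes the one-variable case.

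The main obstacle is the ``$\ge$'' direction, and specifically carrying the AM--GM step through when the coefficients $c_j$ are not scalars but polynomials in the other variables (as they become after a partial polarization). The resolution is simply that these $c_j$ have non-negative coefficients, hence are non-negative at positive arguments, so the monomial-wise inequality $e_j\ge\binom{d}{j}y^j$ survives multiplication by $c_j$ and summation. Everything else — that $\tilde r$ has non-negative coefficients, that $\prod_j x_{k,j}^{\alpha_k/d}=y^{\alpha_k}$, and that diagonalizing $\tilde r$ returns $r$ — is bookkeeping already implicit in the definition of polarization. In effect this proof is Proposition~\ref{capclosure}(9) applied one block at a time.
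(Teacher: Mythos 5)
Your proof is correct and takes essentially the same route as the paper: reduce to polarizing one variable at a time, then apply the symmetric-diagonalization/AM--GM argument of Proposition~\ref{capclosure}(9) to the freshly created block, using the geometric mean of that block's coordinates as the diagonal point. The paper organizes the single-variable reduction via nested infima over the frozen variables and cites Proposition~\ref{capclosure}(9) directly, whereas you re-derive that step by expanding $\tilde r$ in the block's elementary symmetric polynomials with non-negative polynomial coefficients; the underlying AM--GM content is identical.
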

\begin{proof}
    We essentially apply the diagonalization property to each variable in succession. Specifically, we have:
    \[
        \begin{split}
            \cpc_\alpha(p) &= \inf_{y_1,...,y_{n-1} > 0} \frac{1}{y_1^{\alpha_1} \cdots y_{n-1}^{\alpha_{n-1}}} \inf_{x_n > 0} \frac{p(y_1,...,y_{n-1}, x_n)}{x_n^{\alpha_n}} \\
                &= \inf_{y_1,...,y_{n-1} > 0} \frac{1}{y_1^{\alpha_1} \cdots y_{n-1}^{\alpha_{n-1}}} \cpc_{\alpha_n}(p(y_1,...,y_{n-1}, x_n)) \\
                &= \inf_{y_1,...,y_{n-1} > 0} \frac{1}{y_1^{\alpha_1} \cdots y_{n-1}^{\alpha_{n-1}}} \cpc_{\Pol^{\lambda_n}(\alpha_n)}(\Pol^{\lambda_n}(p(y_1,...,y_{n-1}, \cdot))).
        \end{split}
    \]
    By now rearranging the $\inf$'s in the last expression above, we can let $\inf_{y_{n-1} > 0}$ be the inner-most $\inf$. We can then apply the above argument again, and this will work for every $y_k$ in succession. At the end of this process, we obtain:
    \[
    \begin{split}
        \cpc_\alpha(p) &= \cpc_{(\Pol^{\lambda_1}(\alpha_1), ..., \Pol^{\lambda_n}(\alpha_n))}(\Pol^{\lambda_1} \circ \cdots \circ \Pol^{\lambda_n}(p)) \\
            &= \cpc_{\Pol^\lambda(\alpha)}(\Pol^\lambda(p)).
    \end{split}
    \]
\end{proof}

Note that the two main results on polarization---capacity preservation and real stability preservation---imply that we only really need to prove our results in the multiaffine case (i.e., where polynomials are of degree at most 1 in each variable). We will make use of this reduction when we prove our technical results in \S\ref{mainineq}.

Finally before moving on, we give one basic capacity calculation which will prove extremely useful to us almost every time we want to compute capacity.

\begin{lemma} \label{capcompute}
    For $c,\alpha \in \R_+^n$ and $m := \sum_k \alpha_k$, we have the following:
    \[
        \cpc_\alpha((c \cdot x)^m) \equiv \cpc_\alpha\bigg(\big(\sum_k c_k x_k\big)^m\bigg) = \left(\frac{mc}{\alpha}\right)^{\alpha}.
    \]
\end{lemma}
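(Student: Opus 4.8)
The plan is to reduce to a single-variable optimization using the Lagrange-multiplier / AM--GM structure that underlies all capacity computations. Write $q(x) = \left(\sum_k c_k x_k\right)^m$ and consider $\frac{q(x)}{x^\alpha}$ on $\R_{++}^n$. First I would observe that the problem has a natural scaling symmetry: replacing $x_k$ by $t x_k$ multiplies $q(x)$ by $t^m$ and $x^\alpha$ by $t^m$ (since $\sum_k \alpha_k = m$), so the ratio is invariant under global scaling. This means the infimum can be computed by first optimizing over the ``direction'' of $x$ and is attained (or approached) at an interior critical point, so setting $\nabla \log\!\big(q(x)/x^\alpha\big) = 0$ is legitimate.

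Next I would carry out that critical-point computation. We have $\log\frac{q(x)}{x^\alpha} = m \log\!\big(\sum_k c_k x_k\big) - \sum_k \alpha_k \log x_k$, so the $k$-th partial derivative vanishing gives $\frac{m c_k}{\sum_j c_j x_j} = \frac{\alpha_k}{x_k}$, i.e. $c_k x_k = \frac{\alpha_k}{m}\big(\sum_j c_j x_j\big)$ for every $k$. Summing over $k$ confirms consistency (both sides sum to $\sum_j c_j x_j$), and it shows the optimal point is any $x$ with $x_k$ proportional to $\alpha_k/c_k$; by the scaling symmetry we may simply take $x_k = \alpha_k / c_k$. Plugging in: $\sum_k c_k x_k = \sum_k \alpha_k = m$, so $q(x) = m^m$, while $x^\alpha = \prod_k (\alpha_k/c_k)^{\alpha_k} = (\alpha/c)^\alpha$ in the paper's notation. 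Hence $\frac{q(x)}{x^\alpha} = \frac{m^m}{(\alpha/c)^\alpha} = \left(\frac{mc}{\alpha}\right)^\alpha$, matching the claimed value.

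The remaining point is to confirm this critical value really is the infimum rather than just a stationary value, and to handle the boundary/degenerate cases. For the former, by Fact \ref{convexcap} the function $P(x) - \alpha\cdot x$ is convex (here $p = q$ has non-negative coefficients and $\alpha \in \R_+^n$, and $\alpha \in \Newt(q)$ since $\alpha/m$ is a convex combination of the exponent vectors $m e_k$), so any critical point of $P(x) - \alpha \cdot x$ is a global minimum; translating back through $\cpc_\alpha(p) = \exp\inf_x (P(x) - \alpha\cdot x)$ gives the result. Alternatively, and perhaps cleaner, one can sidestep calculus entirely: apply the AM--GM step exactly as in the proof of Fact \ref{newtoncap} with $S = \{m e_1, \dots, m e_n\}$ and weights $c_\mu = \alpha_k/m$ to get the lower bound $\cpc_\alpha(q) \geq \prod_k (c_k^m / (\alpha_k/m))^{\alpha_k/m} = (mc/\alpha)^\alpha$, and then exhibit the matching upper bound by the explicit evaluation $x_k = \alpha_k/c_k$ above. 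I expect the main (minor) obstacle to be the edge cases where some $c_k = 0$ or some $\alpha_k = 0$: if $c_k = 0$ the variable $x_k$ genuinely does not appear, so one should check $\alpha_k = 0$ is forced for finiteness (otherwise both sides are $0$, with the convention $0^0 = 1$ making the formula still hold), and if $\alpha_k = 0$ the factor $(\alpha_k/c_k)^{\alpha_k}$ is just $1$ and the corresponding variable is sent to $0$ in the optimum — these are bookkeeping conventions rather than real difficulties, but they should be stated to make the identity literally correct with the $0^0 = 1$ convention.
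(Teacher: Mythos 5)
Your proof is correct and takes essentially the same route as the paper: find the interior critical point of $q(x)/x^\alpha$ by calculus, evaluate at $x_k = \alpha_k/c_k$, and argue this is a global minimum. The only differences are cosmetic — the paper first factors out the exponent via $\cpc_\alpha((c\cdot x)^m) = (\cpc_{\alpha/m}(c\cdot x))^m$ and dismisses the global-minimum issue ``by homogeneity,'' whereas you work with the $m$-th power directly and give a cleaner justification via the convexity of Fact \ref{convexcap} (and also note the AM--GM alternative and the $0^0$ bookkeeping, neither of which the paper bothers with).
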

\begin{proof}
    Note first that:
    \[
        \cpc_\alpha((c \cdot x)^m) = \left(\cpc_{\frac{\alpha}{m}}(c \cdot x)\right)^m.
    \]
    To compute $\cpc_{\frac{\alpha}{m}}(c \cdot x)$, we use calculus. Let $\beta := \frac{\alpha}{m}$, and for now we assume that $\beta > 0$ and $c > 0$ strictly. We have:
    \[
        \partial_{x_k} \left(\frac{c \cdot x}{x^\beta}\right) = \frac{x^\beta c_k - \beta_k x^{\beta - \delta_k} (c \cdot x)}{x^{2\beta}} = \frac{x_k c_k - \beta_k (c \cdot x)}{x^{\beta+\delta_k}}.
    \]
    That is, the gradient of $\frac{c \cdot x}{x^\beta}$ is the 0 vector whenever $\frac{c_k}{\beta_k} x_k = c \cdot x$ for all $k$. And in fact, any vector satisfying those conditions should minimize $\frac{c \cdot x}{x^\beta}$, by homogeneity. Since $\sum_k \beta_k = 1$, the vector $x_k := \frac{\beta_k}{c_k}$ satisfies the conditions. This implies:
    \[
        \cpc_\beta(c \cdot x) = \frac{c \cdot (\beta/c)}{(\beta/c)^\beta} = \left(\frac{c}{\beta}\right)^\beta.
    \]
    Therefore:
    \[
        \cpc_\alpha((c \cdot x)^m) = \left(\frac{c}{\beta}\right)^{m\beta} = \left(\frac{mc}{\alpha}\right)^{\alpha}.
    \]
\end{proof}

\section{Applications of Capacity Preservers} \label{applications}

We now formally state and discuss our main results and their applications. As mentioned above, we will emulate the Borcea-Br\"and\'en characterization for capacity preservers. Further, we will also demonstrate how our results encapsulate many of the previous results regarding capacity. With this in mind, we first give our main capacity preservation results: one for bounded degree operators a la Theorem \ref{bbbounded}, and one for unbounded degree operators a la Theorem \ref{bbtrans}. Notice that the unbounded degree case is something like a limit of the bounded degree case: the scalar $\frac{\alpha^\alpha (\lambda-\alpha)^{\lambda-\alpha}}{\lambda^\lambda}$ is approximately $\left(\frac{\alpha}{\lambda}\right)^\alpha e^{-\alpha}$ as $\lambda \to \infty$. (The proof of Theorem \ref{boundtrans} shows why the extra $\lambda^{-\alpha}$ factor disappears.)

\begin{theorem}[= Theorem \ref{mainthmbounded}; Bounded degree case]
    Fix a linear operator $T: \R_+^\lambda[x_1,...,x_n] \to \R_+^\gamma[x_1,...,x_m]$ with real stable symbol. For any $\alpha \in \R_+^n$, any $\beta \in \R_+^m$, and any real stable $p \in \R_+^\lambda[x_1,...,x_n]$ we have:
    \[
        \frac{\cpc_\beta(T(p))}{\cpc_\alpha(p)} \geq \frac{\alpha^\alpha (\lambda-\alpha)^{\lambda-\alpha}}{\lambda^\lambda} \cpc_{(\alpha,\beta)}(\Symb^\lambda(T)).
    \]
    Further, this bound is tight for fixed $T$, $\alpha$, and $\beta$.
\end{theorem}

\begin{theorem}[= Theorem \ref{mainthmtrans}; Unbounded degree case]
    Fix a linear operator $T: \R_+[x_1,...,x_n] \to \R_+[x_1,...,x_m]$ with real stable symbol. Then for any $\alpha \in \R_+^n$, any $\beta \in \R_+^m$, and any real stable $p \in \R_+[x_1,...,x_n]$ we have:
    \[
        \frac{\cpc_\beta(T(p))}{\cpc_\alpha(p)} \geq \alpha^\alpha e^{-\alpha} \cpc_{(\alpha,\beta)}(\Symb^\infty(T)).
    \]
    Further, this bound is tight for fixed $T$, $\alpha$, and $\beta$.
\end{theorem}

Note that by Theorems \ref{bbbounded} and \ref{bbtrans}, the above theorems apply to real stability preservers of rank greater than 2 (see Corollaries \ref{maincorbounded} and \ref{maincortrans}).



\subsection{Gurvits' Theorem}

With these results in hand, we now reprove Gurvits' theorem and discuss its importance. Gurvits' original proof of this fact was not very complicated, and our proof will be similar in this regard. This is of course what makes capacity and real stability more generally so intriguing: answers to seemingly hard questions follow from a few basic computations on polynomials.

\begin{theorem}[Gurvits] \label{gurvits}
    For real stable $p \in \R_+^\lambda[x_1,...,x_n]$ we have:
    \[
        \frac{\cpc_{(1^{n-1})}\left(\left.\partial_{x_k}p\right|_{x_k=0}\right)}{\cpc_{(1^n)}(p)} \geq \left(\frac{\lambda_k - 1}{\lambda_k}\right)^{\lambda_k - 1}.
    \]
\end{theorem}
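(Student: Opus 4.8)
The plan is to realize the operator $T = \left.\partial_{x_k}\right|_{x_k = 0}$ as a rank-$> 2$ real stability preserver, compute its symbol explicitly, and then invoke Theorem \ref{mainthmbounded} (Bounded degree) with the specific choices $\alpha = (1^n)$ and $\beta = (1^{n-1})$. Since $T$ sends $\R_+^\lambda[x_1,\dots,x_n]$ to $\R_+^{\lambda'}[x_1,\dots,x_{n-1}]$ where $\lambda' = (\lambda_1,\dots,\lambda_{k-1},\lambda_{k+1},\dots,\lambda_n)$, and $T$ is a composition of differentiation (a basic real stability preserver by Proposition \ref{rsclosure}(5)) and specialization at $x_k = 0$ (Proposition \ref{rsclosure}(3)), the operator $T$ does preserve real stability, so its symbol is real stable with non-negative coefficients and Theorem \ref{mainthmbounded} applies.

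First I would compute $\Symb^\lambda(T)$. By Definition \ref{symbbounded}, $\Symb^\lambda(T) = T\big[(1 + xz)^\lambda\big]$, where $T$ acts on the $x$-variables. Writing $(1+xz)^\lambda = \prod_j (1 + x_j z_j)^{\lambda_j}$, the operator $\partial_{x_k}$ hits only the $k$-th factor, producing $\lambda_k z_k (1 + x_k z_k)^{\lambda_k - 1}$, and then setting $x_k = 0$ replaces that factor by $\lambda_k z_k$. Hence
\[
    \Symb^\lambda(T) = \lambda_k z_k \prod_{j \neq k} (1 + x_j z_j)^{\lambda_j}.
\]
This is a product of a monomial with a disjoint product of univariate-in-each-pair factors, so its capacity factors nicely.

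Next I would compute $\cpc_{(\alpha,\beta)}(\Symb^\lambda(T))$ with $\alpha = (1^n)$ on the $z$-variables $(z_1,\dots,z_n)$ and $\beta = (1^{n-1})$ on the $x$-variables $(x_1,\dots,x_{n-1})$ — here the surviving $x$-variables are exactly those indexed by $j \neq k$. Using the disjoint-product property (Proposition \ref{capclosure}(3)) and the scaling property (Proposition \ref{capclosure}(1)), this capacity equals $\lambda_k$ times the product over $j \neq k$ of $\cpc_{(1,1)}\big((1 + x_j z_j)^{\lambda_j}\big)$, together with the factor $\cpc_1(z_k) = 1$ coming from the bare $z_k$. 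Each two-variable factor is computed by Lemma \ref{capcompute} (or directly): $\cpc_{(1,1)}\big((1 + x_j z_j)^{\lambda_j}\big) = \big(\tfrac{\lambda_j - 1}{\lambda_j}\big)^{\lambda_j - 1}$, since homogenizing $1 + x_j z_j$ and applying the lemma, or simply minimizing $\tfrac{(1+x_jz_j)^{\lambda_j}}{x_j z_j}$ over positive reals (optimum at $x_j z_j = \tfrac{1}{\lambda_j - 1}$), gives this value. Therefore $\cpc_{(\alpha,\beta)}(\Symb^\lambda(T)) = \lambda_k \prod_{j \neq k} \big(\tfrac{\lambda_j - 1}{\lambda_j}\big)^{\lambda_j - 1}$.

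Finally I would assemble the prefactor $\tfrac{\alpha^\alpha(\lambda - \alpha)^{\lambda - \alpha}}{\lambda^\lambda}$ from Theorem \ref{mainthmbounded} with $\alpha = (1^n)$: this equals $\prod_j \tfrac{1^1 \cdot (\lambda_j - 1)^{\lambda_j - 1}}{\lambda_j^{\lambda_j}}$. Multiplying by the computed symbol capacity, the factors $\big(\tfrac{\lambda_j-1}{\lambda_j}\big)^{\lambda_j - 1}$ for $j \neq k$ from the symbol cancel against the $\tfrac{(\lambda_j-1)^{\lambda_j-1}}{\lambda_j^{\lambda_j}}$ pieces of the prefactor only partially — one should track this carefully — but the $j = k$ term of the prefactor is $\tfrac{(\lambda_k - 1)^{\lambda_k - 1}}{\lambda_k^{\lambda_k}}$, and the symbol contributes the lone factor $\lambda_k$, so the $k$-th contribution is $\tfrac{(\lambda_k-1)^{\lambda_k-1}}{\lambda_k^{\lambda_k - 1}} = \big(\tfrac{\lambda_k - 1}{\lambda_k}\big)^{\lambda_k - 1}$, while every $j \neq k$ contribution combines to exactly $\tfrac{(\lambda_j-1)^{\lambda_j-1}}{\lambda_j^{\lambda_j}} \cdot \big(\tfrac{\lambda_j-1}{\lambda_j}\big)^{\lambda_j-1} \cdot \lambda_j / \text{(correction)}$; the cleanest route is to note that in Gurvits' theorem the degree bound relevant to $T$ is only $\lambda_k$ in $x_k$ and degree $\leq 1$ in the other (specialized) variables, so one should apply Theorem \ref{mainthmbounded} with the $\lambda$-vector $(\lambda_k, 1, 1, \dots, 1)$ after first polarizing away from $x_k$, or equivalently observe that the other variables play no role since $p$ has its capacity taken at $\alpha_j = 1 = \lambda_j$ there. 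I expect the main obstacle to be exactly this bookkeeping of degree conventions: one must be careful that the $\lambda$ used in Theorem \ref{mainthmbounded} matches the degree bound in $x_k$ as it appears in the symbol, so that only the $k$-th factor $\big(\tfrac{\lambda_k - 1}{\lambda_k}\big)^{\lambda_k - 1}$ survives and all other contributions trivialize to $1$. Once the conventions are pinned down, the inequality $\tfrac{\cpc_{(1^{n-1})}(\partial_{x_k}p|_{x_k = 0})}{\cpc_{(1^n)}(p)} \geq \big(\tfrac{\lambda_k - 1}{\lambda_k}\big)^{\lambda_k - 1}$ drops out immediately.
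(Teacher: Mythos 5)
Your approach is exactly the paper's: apply Theorem \ref{mainthmbounded} to $T = \left.\partial_{x_k}\right|_{x_k=0}$ with $\alpha = (1^n)$, $\beta = (1^{n-1})$, compute $\Symb^\lambda(T) = \lambda_k z_k \prod_{j\neq k}(1 + x_j z_j)^{\lambda_j}$ (which you do correctly), and then evaluate the capacity of the symbol via the disjoint-product rule. The symbol computation and the choice of $\alpha,\beta$ are all right, and this is indeed how the paper proceeds.

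However, there is a genuine arithmetic error in your evaluation of the two-variable capacity, and it is precisely this error that creates all of the bookkeeping confusion you then try to talk your way around. You correctly find the minimizer $u := x_j z_j = \tfrac{1}{\lambda_j-1}$ of $u \mapsto (1+u)^{\lambda_j}/u$, but then report the minimum value as $\bigl(\tfrac{\lambda_j-1}{\lambda_j}\bigr)^{\lambda_j-1}$. Plugging in the minimizer actually gives
\[
\cpc_{(1,1)}\bigl((1+x_j z_j)^{\lambda_j}\bigr) = (\lambda_j - 1)\left(\frac{\lambda_j}{\lambda_j-1}\right)^{\lambda_j} = \lambda_j \left(\frac{\lambda_j}{\lambda_j - 1}\right)^{\lambda_j - 1},
\]
which is what the paper obtains (via homogenization and Lemma \ref{capcompute}). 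Your value is off by a factor of $\lambda_j\bigl(\tfrac{\lambda_j}{\lambda_j-1}\bigr)^{2(\lambda_j-1)}$, i.e., it is not even close. With the correct value, the prefactor $\tfrac{(\lambda-1)^{\lambda-1}}{\lambda^\lambda} = \prod_j \tfrac{(\lambda_j-1)^{\lambda_j-1}}{\lambda_j^{\lambda_j}}$ cancels exactly against $\lambda_j\bigl(\tfrac{\lambda_j}{\lambda_j-1}\bigr)^{\lambda_j-1}$ for each $j\neq k$, and the single surviving $k$-th term $\lambda_k \cdot \tfrac{(\lambda_k-1)^{\lambda_k-1}}{\lambda_k^{\lambda_k}}$ gives $\bigl(\tfrac{\lambda_k-1}{\lambda_k}\bigr)^{\lambda_k-1}$. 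There is no need to change the degree vector to $(\lambda_k,1,\dots,1)$ or to pre-polarize; your proposed fix along those lines is both unnecessary and incorrect, since the degree $\lambda_j$ of $p$ in $x_j$ for $j\neq k$ is arbitrary and must be kept. Once the capacity of the two-variable factor is corrected, your argument becomes the paper's proof verbatim.
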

\begin{proof}
    We apply Theorem \ref{mainthmbounded} above for $T := \left.\partial_{x_k}\right|_{x_k=0}$, $\alpha := (1^n)$, and $\beta := (1^{n-1})$. To do this we need to compute the right-hand side of the expression in Theorem \ref{mainthmbounded}, making use of properties from Proposition \ref{capclosure}. We have:
    \[
        \begin{split}
            \frac{\alpha^\alpha (\lambda-\alpha)^{\lambda-\alpha}}{\lambda^\lambda} &\cpc_{(\alpha,\beta)}(\Symb^\lambda(T)) \\
                &= \frac{(\lambda-1)^{\lambda-1}}{\lambda^\lambda} \cpc_{(1^n,1^{n-1})}\left(\left.\partial_{x_k}(1+xz)^\lambda\right|_{x_k=0}\right) \\
                &= \frac{(\lambda-1)^{\lambda-1}}{\lambda^\lambda} \cpc_{(1^n,1^{n-1})}\left(\lambda_k z_k \prod_{j \neq k} (1+x_jz_j)^{\lambda_j}\right) \\
                &= \frac{(\lambda-1)^{\lambda-1}}{\lambda^\lambda} \lambda_k \prod_{j \neq k} \cpc_{(1,1)}\left((1+x_jz_j)^{\lambda_j}\right).
        \end{split}
    \]
    Note that $\cpc_{(1,1)}((1+x_jz_j)^{\lambda_j}) = \cpc_1((1+x_j)^{\lambda_j})$. Using the homogenization property and Lemma \ref{capcompute}, we then have:
    \[
        \cpc_1((1+x_j)^{\lambda_j}) = \cpc_{(1,\lambda_j-1)}((x_j + y_j)^{\lambda_j}) = \lambda_j \left(\frac{\lambda_j}{\lambda_j - 1}\right)^{\lambda_j - 1}.
    \]
    Therefore:
    \[
        \begin{split}
            \frac{\alpha^\alpha (\lambda-\alpha)^{\lambda-\alpha}}{\lambda^\lambda} &\cpc_{(\alpha,\beta)}(\Symb^\lambda(T)) \\
                &= \frac{(\lambda-1)^{\lambda-1}}{\lambda^\lambda} \lambda_k \prod_{j \neq k} \cpc_{(1,1)}\left((1+x_jz_j)^{\lambda_j}\right) \\
                &= \frac{(\lambda-1)^{\lambda-1}}{\lambda^\lambda} \lambda_k \prod_{j \neq k} \lambda_j \left(\frac{\lambda_j}{\lambda_j - 1}\right)^{\lambda_j - 1} \\
                &= \left(\frac{\lambda_k - 1}{\lambda_k}\right)^{\lambda_k - 1}.
        \end{split}
    \]
\end{proof}

This proof will serve as a good baseline for other applications of our main theorems. Roughly speaking, most applications will make use of Lemma \ref{capcompute} and the properties of Proposition \ref{capclosure} in interesting ways. And often, the inequalities obtained will directly translate to various combinatorial statements.

Specifically, what sorts of combinatorial statements can be derived from Gurvits' theorem? The most well known are perhaps Schrijver's theorem and the Van der Waerden bound on the permanent (see \cite{gurvits2008van}). What forms the link between capacity and combinatorial objects like doubly stochastic matrices and perfect matchings is the following polynomial defined for a given matrix $M$:
\[
    p_M(x) := \prod_i \sum_j m_{ij} x_j.
\]
Note that this polynomial is real stable whenever the entries of $M$ are nonnegative. The following is then quite suggestive.

\begin{lemma}[Gurvits]
    If $M$ is a doubly stochastic matrix, then $\cpc_{(1^n)}(p_M) = 1$.
\end{lemma}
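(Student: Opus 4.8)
The plan is to compute the two quantities separately: show $\cpc_{(1^n)}(p_M) \geq 1$ using the capacity preservation machinery (specifically Lemma \ref{capcompute} together with the product bound from Proposition \ref{capclosure}), and then exhibit an evaluation point witnessing $\cpc_{(1^n)}(p_M) \leq 1$. Writing $p_M(x) = \prod_i \ell_i(x)$ where $\ell_i(x) = \sum_j m_{ij} x_j$, each linear form $\ell_i$ is of degree $1$, and the natural target exponent vector to split the $(1^n)$-capacity among the factors is: give factor $\ell_i$ the exponent vector $r^{(i)} := (m_{i1}, \dots, m_{in})$, which has entries summing to $1$ since $M$ is row-stochastic, and whose $i$-indexed sum over all rows gives $\sum_i m_{ij} = 1$ by column-stochasticity; hence $\sum_i r^{(i)} = (1^n)$, so the product property $\cpc_{\alpha+\beta}(pq) \geq \cpc_\alpha(p)\cpc_\beta(q)$ applies iteratively.

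First I would invoke Lemma \ref{capcompute} with $m = 1$ (i.e. $\alpha = r^{(i)}$, $c = (m_{i1},\dots,m_{in})$) to get $\cpc_{r^{(i)}}(\ell_i) = \left(\frac{c}{r^{(i)}}\right)^{r^{(i)}} = \prod_j \left(\frac{m_{ij}}{m_{ij}}\right)^{m_{ij}} = 1$ — here one must be slightly careful about zero entries of $M$, but the convention $0^0 = 1$ (and the fact that $r^{(i)} \in \Newt(\ell_i)$ exactly when the support matches) handles this, and the lemma's proof goes through on the support. Then iterating the product property over $i = 1, \dots, n$ gives $\cpc_{(1^n)}(p_M) = \cpc_{\sum_i r^{(i)}}\!\left(\prod_i \ell_i\right) \geq \prod_i \cpc_{r^{(i)}}(\ell_i) = 1$.

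For the reverse inequality, I would simply evaluate at the all-ones vector: $\frac{p_M(1,\dots,1)}{1^{(1^n)}} = \prod_i \sum_j m_{ij} = \prod_i 1 = 1$, since each row sums to $1$. Therefore $\cpc_{(1^n)}(p_M) = \inf_{x>0} \frac{p_M(x)}{x^{(1^n)}} \leq 1$, and combined with the lower bound we conclude $\cpc_{(1^n)}(p_M) = 1$. The main obstacle is a minor one: making sure the capacity computation for $\ell_i$ and the product bound are legitimate when $M$ has zero entries, so that the exponent vectors $r^{(i)}$ genuinely lie in the relevant Newton polytopes; this is resolved by restricting each $\ell_i$ to its support before applying Lemma \ref{capcompute}. (One could also observe that the all-ones point is a global minimizer directly, giving the equality in one stroke, but the split-capacity argument is the one that generalizes and fits the paper's framework.)
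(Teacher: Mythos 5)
Your proof is correct, but it takes a genuinely different route from the paper's. The paper disposes of this lemma in one line by invoking Fact \ref{fact:marginals}: since $M$ is row-stochastic, $p_M(1,\ldots,1)=1$ so $p_M$ is a probability generating function, and since $M$ is column-stochastic the marginal vector of the associated distribution is $(1^n)$; part (2) of Fact \ref{fact:marginals} then gives $\cpc_{(1^n)}(p_M)=1$ directly. Your argument instead decomposes $p_M=\prod_i \ell_i$ and allocates the exponent vector $(1^n)$ across the factors as $r^{(i)}=(m_{i1},\ldots,m_{in})$, using the product property of Proposition \ref{capclosure} together with Lemma \ref{capcompute} for the lower bound $\geq 1$, and the evaluation at $(1,\ldots,1)$ for the upper bound. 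Both are valid; the paper's route is shorter and highlights the probabilistic interpretation (capacity equals $1$ exactly at the marginal vector), while yours is more hands-on, localizes the argument to the product structure of $p_M$, and — as you note — is the kind of split-capacity computation that generalizes cleanly, e.g.\ to the $(a,b)$-stochastic case appearing later in the paper. Your care about zero entries of $M$ and the Newton polytope condition $r^{(i)}\in\Newt(\ell_i)$ is appropriate and correctly resolved.
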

\begin{proof}
    Follows from Fact \ref{fact:marginals}.
\end{proof}

For most of the arguments, one considers $p = p_M$ and $T$ such that $T(p)$ computes the desired quantity related to $M$. The above theorems then give something like:
\[
    \text{desired quantity} = \frac{\cpc(T(p_M))}{\cpc(p_M)} \geq \text{constant depending on $T$ but not on $M$}.
\]
This gives us a bound on the desired quantity (e.g. perfect matchings or the permanent) for any $M$, so long as we can compute the capacity of $\Symb(T)$.

In addition to these types of inequalities, Gurvits also demonstrates how his theorem implies similar results for ``doubly stochastic'' $n$-tuples of matrices (a conjecture due to Bapat \cite{bapat1989mixed}). In fact, this notion of doubly stochastic aligns with a generalized notion used recently in \cite{garg2016deterministic},\cite{burgisser2017alternating}. In those papers, doubly stochastic matrices and other similar objects play a crucial role in defining certain important orbits of actions on tuples of matrices. Specifically in \cite{garg2016deterministic} (or \cite{garg2015operator}), a version of this idea was used to produce a polynomial time algorithm for the noncommutative polynomial identity testing problem. A certain notion of capacity for matrices was quite important in the analysis of their algorithms.

\subsection{Imperfect Matchings and Biregular Graphs} \label{sec:biregular_graphs}

The most important application of our results is a new proof of a bound on size-$k$ matchings of a biregular bipartite graph, due to Csikv\'ari \cite{csikvari2014lower}. This result is a generalization of Schrijver's bound, 
and it also settled and strengthened the Friedland macthing conjecture \cite{friedland2008number}. We first state Csikv\'ari's results, in a form more amenable to the notation of this paper.

\begin{theorem}[Csikv\'ari] \label{csikvari}
    Let $G$ be an $(a,b)$-biregular bipartite graph with $(m,n)$-bipartitioned vertices (so that $am = bn$ is the number of edges of $G$). Then the number of size-$k$ matchings of $G$ is bounded as follows:
    \[
        \mu_k(G) \geq \binom{n}{k}(ab)^k \frac{m^m(ma-k)^{ma-k}}{(ma)^{ma}(m-k)^{m-k}}.
    \]
\end{theorem}

Notice that this immediately implies the following bound for regular bipartite graphs.

\begin{corollary}[Csikv\'ari]
    Let $G$ be a $d$-regular bipartite graph with $2n$ vertices. Then:
    \[
        \mu_k(G) \geq \binom{n}{k} d^k \left(\frac{nd-k}{nd}\right)^{nd-k} \left(\frac{n}{n-k}\right)^{n-k}.
    \]
\end{corollary}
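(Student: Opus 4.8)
The plan is to obtain this as an immediate specialization of Theorem \ref{csikvari}; no further use of capacity or the main inequalities is needed, since all the analytic work is already contained in the proof of that theorem. First I would observe that a $d$-regular bipartite graph on $2n$ vertices has exactly $n$ vertices on each side of its bipartition and every vertex of degree $d$; it is therefore an $(a,b)$-biregular bipartite graph with $(m,n)$-bipartitioned vertices in the sense of Theorem \ref{csikvari}, with $a=b=d$ and $m=n$. The consistency requirement $am=bn$ reads $dn=dn$, so it is satisfied.

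The next step is to substitute $a=b=d$ and $m=n$ into the bound of Theorem \ref{csikvari}, giving
\[
    \mu_k(G) \geq \binom{n}{k}(d^2)^k \, \frac{n^n (nd-k)^{nd-k}}{(nd)^{nd}(n-k)^{n-k}}.
\]
What remains is a purely algebraic rearrangement. I would write $(nd)^{nd}=n^{nd}d^{nd}$, collect the power of $d$ as $d^{2k}/d^{nd}=d^{2k-nd}$, and collect the power of $n$ as $n^n/n^{nd}=n^{n-nd}$, so that the right-hand side becomes $\binom{n}{k}\, d^{2k-nd}\, n^{n-nd}\, (nd-k)^{nd-k}/(n-k)^{n-k}$. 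On the other side, expanding the two parenthesized factors of the target expression as $(nd-k)^{nd-k}/(n^{nd-k}d^{nd-k})$ and $n^{n-k}/(n-k)^{n-k}$ and multiplying by $d^k$ produces $\binom{n}{k}\, d^{k-(nd-k)}\, n^{(n-k)-(nd-k)}\, (nd-k)^{nd-k}/(n-k)^{n-k}$, and since $k-(nd-k)=2k-nd$ and $(n-k)-(nd-k)=n-nd$, the two sides coincide.

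I expect no real obstacle here: the only thing to watch is that the exponents of $d$ and of $n$ telescope correctly (several of them combine), so the bookkeeping should be done carefully rather than quickly. An alternative would be to re-run the capacity argument of Theorem \ref{csikvari} directly in the regular case with the polynomial $p_G$, but the substitution route is strictly shorter and is the one I would present.
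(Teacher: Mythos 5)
Your proposal is correct and is exactly what the paper intends (the paper offers no separate proof, simply noting the corollary follows immediately from Theorem \ref{csikvari} by specializing): set $a=b=d$, $m=n$, and the algebraic simplification you carry out matches the stated bound.
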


To prove these results, we first need to generalize Gurvits' capacity lemma for doubly stochastic matrices. Specifically we want to be able to handle \emph{$(a,b)$-stochastic matrices}, which are matrices with row sums equal to $a$ and columns sums equal to $b$. We care about such matrices, because the bipartite adjacency matrix of a $(a,b)$-biregular graph is $(a,b)$-stochastic. Note that if $M$ is an $(a,b)$-stochastic matrix which is of size $m \times n$, then $am = bn$.

\begin{lemma}
    If $M$ is an $(a,b)$-stochastic matrix, then $\cpc_{(\frac{m}{n},...,\frac{m}{n})}(p_M) = a^m$.
\end{lemma}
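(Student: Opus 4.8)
The plan is to establish the two bounds $\cpc_\alpha(p_M) \le a^m$ and $\cpc_\alpha(p_M) \ge a^m$ separately, where I write $\alpha := (\tfrac{m}{n},\ldots,\tfrac{m}{n})$ throughout. The one numerical fact I will lean on repeatedly is that $am = bn$ rearranges to $\tfrac{b}{a} = \tfrac{m}{n}$, i.e. the common column sum divided by the common row sum is exactly $\alpha_k$. Note also that $p_M$ is homogeneous of degree $m$ and $\sum_k \alpha_k = n\cdot\tfrac{m}{n} = m$, so $\alpha$ lies in $\Newt(p_M)$ and the capacity is finite and positive.

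For the upper bound I would simply evaluate the infimum defining the capacity at the all-ones point. Each linear factor $\sum_j m_{ij}x_j$ of $p_M$ equals the $i$-th row sum $a$ there, so $p_M(1,\ldots,1) = a^m$, and since $(1^n)^\alpha = 1$ this gives $\cpc_\alpha(p_M) \le p_M(1,\ldots,1) = a^m$.

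For the lower bound the idea is to apply the weighted AM--GM inequality to each linear factor in turn, in the same spirit as the proof of Fact \ref{newtoncap}. For fixed $i$ the weights $\tfrac{m_{i1}}{a},\ldots,\tfrac{m_{in}}{a}$ are non-negative and sum to $1$ (this is precisely the row-sum condition), so for any $x > 0$,
\[
    \sum_j m_{ij}x_j \;=\; a\sum_j \frac{m_{ij}}{a}x_j \;\ge\; a\prod_j x_j^{m_{ij}/a}.
\]
Multiplying over all $i$ and collecting the exponent of each $x_j$ yields
\[
    p_M(x) \;=\; \prod_i \sum_j m_{ij}x_j \;\ge\; a^m \prod_j x_j^{\frac1a\sum_i m_{ij}} \;=\; a^m \prod_j x_j^{b/a} \;=\; a^m\, x^\alpha,
\]
using the column-sum condition and $\tfrac{b}{a} = \tfrac{m}{n}$. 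Dividing by $x^\alpha$ and taking the infimum over $x > 0$ gives $\cpc_\alpha(p_M) \ge a^m$, which together with the previous paragraph proves the claim.

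It is worth recording the conceptual reading, which parallels the proof of Gurvits' lemma for doubly stochastic matrices: $q := a^{-m}p_M$ has non-negative coefficients and $q(1,\ldots,1) = 1$, so by the scaling property of Proposition \ref{capclosure} it is equivalent to show $\cpc_\alpha(q) = 1$; and $q$ is the probability generating function of a sum of $m$ independent $\Z_+^n$-valued random variables whose $k$-th coordinate has mean $\tfrac1a\sum_i m_{ik} = \tfrac{b}{a} = \alpha_k$, so $\alpha$ is exactly the vector of means, which forces $\cpc_\alpha(q) = 1$ (equivalently, $0$ is then a critical point of the convex function $Q(x) - \alpha\cdot x$ from Fact \ref{convexcap}, hence its global minimum, with value $Q(0) = \log q(1,\ldots,1) = 0$). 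I do not expect a genuine obstacle; the only mild subtlety is that $p_M$ need not be multiaffine, so this last viewpoint is a small extension of Fact \ref{fact:marginals} rather than a direct application of it — one either argues via the AM--GM computation above (which needs no multiaffineness) or first reduces to the multiaffine case by polarizing via Proposition \ref{cappol}. The other point requiring care is purely bookkeeping: keeping the row-sum normalization of the AM--GM weights and the column-sum computation of the exponents straight so that they recombine to exactly $\alpha$.
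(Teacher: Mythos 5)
Your proof is correct. The paper itself dispatches this lemma with a one-line citation to Fact~\ref{fact:marginals}: view $q := a^{-m} p_M$ as a probability generating function, observe that $\alpha = (\tfrac{m}{n},\ldots,\tfrac{m}{n}) = (\tfrac{b}{a},\ldots,\tfrac{b}{a})$ is its vector of marginals, and conclude $\cpc_\alpha(q) = 1$. You take a different route: rather than invoking the marginals fact, you prove both inequalities directly --- the upper bound by evaluating at the all-ones point, and the lower bound by applying weighted AM--GM separately to each linear factor $\sum_j m_{ij} x_j$ (with weights $m_{ij}/a$) and then collecting exponents via the column-sum condition. This is a genuinely self-contained argument, and it quietly repairs a small wrinkle in the paper's proof that you correctly flag: as stated, Fact~\ref{fact:marginals} is restricted to multiaffine polynomials in $\R_+^{(1^n)}[x_1,\ldots,x_n]$, whereas $p_M$ here has degree $b$ in each variable, so the paper's citation implicitly requires either polarizing via Proposition~\ref{cappol} first or extending the fact to general PGFs. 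Your per-factor AM--GM also exploits the product structure of $p_M$ more directly than a blanket appeal to the Newton-polytope bound of Fact~\ref{newtoncap} would, which is what makes the lower bound tight rather than merely positive. Your ``conceptual reading'' paragraph is precisely the paper's intended argument, so you have, in effect, given both proofs and correctly identified the gap between them.
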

\begin{proof}
    Follows from Fact \ref{fact:marginals}.
\end{proof}

We also need a linear operator which computes the number of size-$k$ matchings of an $(a,b)$-biregular bipartite graph. In fact when $M$ is the bipartite adjacency matrix of $G$, we have the following:
\[
    a^{m-k} \mu_k(G) = \sum_{S \in \binom{[n]}{k}} \partial_x^S p_M(1) = \cpc_\varnothing \Bigg(\sum_{S \in \binom{[n]}{k}} \partial_x^S p_M(1)\Bigg).
\]
Note that each differential operator in the sum picks out a disjoint collection of $k \times k$ subpermutations of the matrix $M$. After applying each differential operator, we are left with terms which are products of $m-k$ remaining linear forms from $p_M$. Plugging in 1 then gives $a^{m-k}$ (since row sums are $a$), and this is why that factor appears above.

Next, we need to prove that we can apply Theorem \ref{mainthmbounded} to the operator $T := \sum_{S \in \binom{[n]}{k}} \left.\partial_x^S\right|_{x=1}$. We choose $\lambda = (b,...,b)$ here because $(a,b)$-regularity of $G$ implies every variable will be of degree $b$ in the polynomial $p_M$ (where $M$ is the bipartite adjacency matrix of $G$). That is, some of the degree information of $G$ is encoded as the degree of the associated polynomial $p_M$. This same thing was done in \cite{gurvits2006hyperbolic} and \cite{friedland2008lower}, and this further demonstrates how capacity bounds can combine an interesting mix of analytic and combinatorial information.

\begin{lemma}
    The operator $T := \sum_{S \in \binom{[n]}{k}} \left.\partial_x^S\right|_{x=1}$ has real stable symbol.
\end{lemma}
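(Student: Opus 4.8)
The plan is to compute $\Symb^\lambda(T)$ explicitly for $T := \sum_{S \in \binom{[n]}{k}} \left.\partial_x^S\right|_{x=1}$ with $\lambda = (b,\dots,b)$ and then recognize the resulting polynomial as something manifestly real stable. First I would record that $\Symb^\lambda(T) = T[(1+xz)^\lambda]$, where $T$ acts on the $x$ variables only. Since $(1+xz)^\lambda = \prod_{j=1}^n (1+x_j z_j)^b$ is a product over the individual variables, each differential operator $\partial_x^S = \prod_{j \in S} \partial_{x_j}$ acts variable-by-variable: $\partial_{x_j}(1+x_j z_j)^b = b z_j (1+x_j z_j)^{b-1}$, and then the evaluation $x=1$ sets each $x_j$ to $1$. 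So $\left.\partial_x^S (1+xz)^\lambda\right|_{x=1} = b^{|S|} \left(\prod_{j \in S} z_j\right) \prod_{j \notin S}(1+z_j)^b \cdot \prod_{j \in S}(1+z_j)^{b-1}$, which I can rewrite as $b^k \prod_{j=1}^n (1+z_j)^{b-1} \cdot \prod_{j \in S} \frac{z_j(1+z_j)^{b-1} \cdot (1+z_j)}{(1+z_j)^{b-1}}$ — more cleanly, $\left.\partial_x^S (1+xz)^\lambda\right|_{x=1} = b^k \left(\prod_{j=1}^n (1+z_j)^{b-1}\right) \prod_{j \in S} \frac{bz_j}{\,?\,}$; let me instead just factor out $\prod_j (1+z_j)^{b-1}$ from every term, leaving $\prod_{j\in S} bz_j \cdot \prod_{j \notin S}(1+z_j)$. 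Summing over all $S \in \binom{[n]}{k}$ gives
\[
    \Symb^\lambda(T) = b^k\left(\prod_{j=1}^n (1+z_j)^{b-1}\right) e_k\big(b z_1, \dots, b z_n;\, \text{wait}\big),
\]
so more precisely $\Symb^\lambda(T) = \left(\prod_{j=1}^n (1+z_j)^{b-1}\right) \cdot e_k(bz_1 + \dots)$ — the inner sum is $\sum_{|S|=k} \prod_{j \in S}(b z_j)\prod_{j \notin S}(1+z_j)$, which is exactly the degree-$k$ part (in a grading I'll make precise) of $\prod_{j=1}^n (1 + z_j + b z_j) $; concretely it equals the coefficient extraction $\sum_{|S|=k}\prod_{j\in S}(bz_j)\prod_{j\notin S}(1+z_j)$.

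The key observation is that this inner sum is a specialization of an elementary-symmetric-type construction that preserves real stability. Specifically, introduce an auxiliary variable $t$ and consider $\prod_{j=1}^n \big((1+z_j) + t\,b z_j\big) = \sum_{k=0}^n t^k \sum_{|S|=k}\prod_{j\in S}(bz_j)\prod_{j\notin S}(1+z_j)$. The polynomial $\prod_{j=1}^n\big((1+z_j) + tbz_j\big)$ is real stable in $(t, z_1,\dots,z_n)$ since it is a product of affine forms with nonnegative coefficients (each factor $1 + z_j + tbz_j$ is affine, hence stable, and products of stable polynomials are stable). Now I want to extract the coefficient of $t^k$ and argue this preserves real stability. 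The cleanest route: the coefficient of $t^k$ in a real stable polynomial that has degree exactly $n$ in $t$... is not in general real stable. Instead I would use the standard fact (a consequence of the Borcea–Br\"and\'en theory, or directly of Proposition~\ref{rsclosure} applied after a substitution) that for a real stable $q(t, z)$, the operator extracting a fixed $t$-degree coefficient via $\frac{1}{k!}\partial_t^k|_{t=0}$ followed by ... no — better: apply inversion in $t$ and differentiation. Actually the honest and simplest statement to invoke is that $\sum_{|S|=k}\prod_{j\in S}(bz_j)\prod_{j\notin S}(1+z_j)$ is, up to the harmless factor $b^k$, the result of applying $\left.\partial_x^{(1^n)}\right.$-type symmetric operations; but the most robust path is to note it equals $\frac{1}{(n-k)!}\left.\partial_t^{n-k}\right|_{t=0}$ applied to $t^n \prod_j(1 + z_j + t^{-1} b z_j)\cdot$(adjustment), i.e. use inversion (Prop~\ref{rsclosure}(4)) plus differentiation (Prop~\ref{rsclosure}(5)) plus specialization (Prop~\ref{rsclosure}(3)) on the stable polynomial $\prod_j(tbz_j + t(1+z_j))$ — hmm, the bookkeeping needs care.

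So the key steps, in order, are: (1) expand $\Symb^\lambda(T) = T[(1+xz)^\lambda]$ using the product structure of $(1+xz)^\lambda$ and the fact that $T$ is a sum of products of single-variable operators, obtaining $\Symb^\lambda(T) = \prod_{j=1}^n(1+z_j)^{b-1} \cdot \sum_{|S|=k}\prod_{j\in S}(bz_j)\prod_{j\notin S}(1+z_j)$; (2) observe the first factor $\prod_j(1+z_j)^{b-1}$ is real stable with nonnegative coefficients and that real stability is preserved under multiplication by such a factor (Proposition~\ref{rsclosure} together with products of stable polynomials being stable); (3) prove the second factor is real stable by exhibiting it as a $t$-coefficient of the stable polynomial $\prod_{j=1}^n\big(b t z_j + (1+z_j)\big)$ in $(t,z)$ and invoking a coefficient-extraction real stability preserver — concretely, apply inversion in $t$ to get $\prod_j\big(bz_j + t(1+z_j)\big) \cdot$(power of $t$), differentiate $n-k$ times in $t$, then set $t=0$, using parts (3),(4),(5) of Proposition~\ref{rsclosure}; (4) conclude $\Symb^\lambda(T)$ is a product of two real stable polynomials with nonnegative coefficients, hence real stable with nonnegative coefficients. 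The main obstacle I anticipate is step (3): getting the inversion/differentiation bookkeeping exactly right so that the $t$-coefficient extraction is legitimately realized by composing the listed basic real stability preservers, and making sure no spurious sign or degree issue (the ``degenerate'' case of Theorem~\ref{bbbounded}) intrudes. Since everything in sight has nonnegative coefficients and the degree in the $z$ variables is genuinely larger than $2$ in general, the degeneracy case will not cause trouble, but the cleanest writeup may simply observe that $\sum_{|S|=k}\prod_{j\in S}(bz_j)\prod_{j\notin S}(1+z_j) = e_k$ evaluated at the points $\frac{bz_j}{1+z_j}$ times $\prod_j(1+z_j)$, and that $e_k$ being a real stability preserver input is classical — I would pick whichever of these framings leads to the shortest rigorous argument.
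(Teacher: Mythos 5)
You compute $\Symb^\lambda(T)$ correctly and arrive at the same factorization as the paper,
\[
\Symb^\lambda(T) \;=\; b^k\,\prod_{j=1}^n(1+z_j)^{b-1}\cdot \sum_{|S|=k}\prod_{j\in S}z_j\prod_{j\notin S}(1+z_j),
\]
and your step (2) (the prefactor is real stable and multiplying two real stable polynomials keeps real stability) is fine. The problem is the justification of step (3). You assert that $\prod_{j=1}^n\big((1+z_j)+tbz_j\big)$ is real stable because it is ``a product of affine forms with nonnegative coefficients.'' That is false: each factor $1+z_j+btz_j$ is \emph{not} affine (it has the cross term $tz_j$), and it is not real stable for $b>0$. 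Indeed, writing a multiaffine $a+\beta z+\gamma t+\delta zt$, the strong Rayleigh condition for stability is $\beta\gamma\ge a\delta$; here $a=1,\beta=1,\gamma=0,\delta=b$, so the condition reads $0\ge b$, which fails. Concretely, $z=i$, $t=(-1+i)/b$ are both in the open upper half-plane and make the factor vanish. Because coefficient extraction only preserves real stability when the ambient polynomial is real stable, the rest of your argument cannot get off the ground from this starting point; nor can you repair it by appealing to Proposition~\ref{rsclosure}(4), since that preserves stability only of stable inputs (and inverts all variables simultaneously).

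There is a nearby version of your coefficient-extraction idea that does work: instead of your bivariate polynomial, start from $\prod_{j=1}^n\big(bz_j + t(1+z_j)\big)$. Each factor here has $a=0,\beta=b,\gamma=1,\delta=1$, so the Rayleigh condition $\beta\gamma\ge a\delta$ becomes $b\ge 0$, which holds; hence the product is real stable. The coefficient of $t^{\,n-k}$ in this product is precisely $\sum_{|S|=k}\prod_{j\in S}(bz_j)\prod_{j\notin S}(1+z_j)$, and extracting it via $\frac{1}{(n-k)!}\partial_t^{\,n-k}|_{t=0}$ only uses differentiation and specialization from Proposition~\ref{rsclosure}---no inversion is needed at all. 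Your final alternative framing, $\prod_j(1+z_j)\,e_k\!\big(\tfrac{bz_1}{1+z_1},\dots,\tfrac{bz_n}{1+z_n}\big)$, is also a valid route (each $z\mapsto \tfrac{bz}{1+z}$ is a M\"obius map preserving the upper half-plane for $b>0$, and $e_k$ is real stable), but it uses a tool not in the paper's toolbox. The paper's own proof is shorter and stays entirely inside the paper's machinery: it observes that $\sum_{|S|=k} z^S(1+z)^{(1^n)-S}=\binom{n}{k}\Pol^n\!\big(x^k(1+x)^{n-k}\big)$, which is real stable by Proposition~\ref{rspol} because $x^k(1+x)^{n-k}$ is real-rooted. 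As written, your proposal has a genuine gap at the crucial stability claim; you should either switch to $\prod_j\big(bz_j+t(1+z_j)\big)$, adopt the M\"obius/$e_k$ argument, or use polarization as the paper does.
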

\begin{proof}
    Here the input polynomial space is $R_+^{(b,...,b)}[x_1,...,x_n]$, since degree is determined by the column sums. Denoting $\lambda := (b,...,b)$, we compute $\Symb^\lambda(T)$:
    \[
    \begin{split}
        T[(1+xz)^\lambda] &= \sum_{S \in \binom{[n]}{k}} \left.\partial_x^S\right|_{x=1} (1+xz)^\lambda \\
            &= \sum_{S \in \binom{[n]}{k}} b^k z^S (1+z)^{\lambda - S} \\
            &= b^k (1+z)^{\lambda-1} \sum_{S \in \binom{[n]}{k}} z^S (1+z)^{1-S}.
    \end{split}
    \]
    Notice that $\sum_{S \in \binom{[n]}{k}} z^S (1+z)^{1-S} = \binom{n}{k}\Pol^n(x^k(1+x)^{n-k})$, which is real stable by Proposition \ref{rspol}.
\end{proof}

Applying Theorem \ref{mainthmbounded} now shows us the way toward the rest of the proof. Denoting $\lambda := (b,...,b)$ and $\alpha := (\frac{m}{n},...,\frac{m}{n})$, we now have:
\[
    \begin{split}
        a^{m-k} \mu_k(G) &= \sum_{S \in \binom{[n]}{k}} \partial_x^S p_M(1) \\
            &\geq \frac{\alpha^\alpha (\lambda-\alpha)^{\lambda-\alpha}}{\lambda^\lambda} \cpc_\alpha(p_M) \cpc_{(\alpha,\varnothing)}(\Symb^\lambda(T)) \\
            &= \left(\frac{(\frac{m}{n})^{\frac{m}{n}} (b - \frac{m}{n})^{b - \frac{m}{n}}}{b^b}\right)^n a^m \cpc_\alpha(\Symb^\lambda(T)) \\
            &= \frac{(ma)^m (nb-m)^{nb-m}}{(nb)^{nb}} \cpc_\alpha(\Symb^\lambda(T)).
    \end{split}
\]

So the last computation we need to make is that of $\cpc_\alpha(\Symb^\lambda(T))$. Fortunately since $\Symb^\lambda(T)$ is symmetric and $\alpha = (\frac{m}{n},...,\frac{m}{n})$, we can apply the symmetric diagonalization property to simplify this computation. Using our previous computation of $\Symb^\lambda(T)$, this gives:
\[
\begin{split}
    \cpc_{(\frac{m}{n},...,\frac{m}{n})}(\Symb^\lambda(T)) &= \cpc_m \left(b^k \binom{n}{k} z^k (1+z)^{nb-k}\right) \\
        &= b^k \binom{n}{k} \cpc_m(z^k (1+z)^{nb-k}).
\end{split}
\]
The remaining capacity computation then follows from homogenization and Lemma \ref{capcompute}:
\[
\begin{split}
    \cpc_m(z^k (1+z)^{nb-k}) &= \inf_{z > 0} \frac{z^k (1+z)^{nb-k}}{z^m} \\
        &= \cpc_{m-k}((1+z)^{nb-k}) \\
        &= \left(\frac{nb-k}{m-k}\right)^{m-k} \left(\frac{nb-k}{nb-m}\right)^{nb-m}.
\end{split}
\]
Putting all of these computations together and recalling $ma = nb$ gives:
\[
    \begin{split}
        \mu_k(G) &\geq a^{k-m} \frac{(ma)^m (nb-m)^{nb-m}}{(nb)^{nb}} b^k \binom{n}{k} \left(\frac{nb-k}{m-k}\right)^{m-k} \left(\frac{nb-k}{nb-m}\right)^{nb-m} \\
            &= \binom{n}{k} a^{k-m} b^k \frac{(ma)^m(nb-k)^{nb-k}}{(nb)^{nb}(m-k)^{m-k}} \\
            &= \binom{n}{k} (ab)^k \frac{m^m(ma-k)^{ma-k}}{(ma)^{ma}(m-k)^{m-k}}.
    \end{split}
\]
This is precisely the desired inequality.

\subsection{Differential Operators in General}

We now give general capacity preservation bounds for stability preservers which are differential operators. This was done in \cite{anari2017generalization} for differential operators which preserve real stability on input polynomials of all degrees. Here, we restrict to those operators which only preserve real stability for polynomials of some fixed bounded degree. That said, consider the following bilinear operator:
\[
    (p \boxplus^\lambda q)(x) := \sum_{0 \leq \mu \leq \lambda} (\partial_x^\mu p)(x) (\partial_x^{\lambda-\mu} q)(0).
\]
It is straightforward to see that by fixing $q$, one can construct any constant coefficient differential operator on $\R^\lambda[x_1,...,x_n]$. And it turns out that if $q$ is real stable, then $(\cdot \boxplus^\lambda q)(x)$ is a real stability preserver.

It turns out that more is true, however. The operator $\boxplus^\lambda$ can actually be applied to polynomials in $\R^{(\lambda,\lambda)}[x_1,...,x_n,y_1,...,y_n]$ by considering this polynomial space as a tensor product of polynomial spaces. More concretely, we specify how this operator acts on the monomial basis:
\[
    \boxplus^\lambda: x^\mu y^\nu \mapsto x^\mu \boxplus^\lambda x^\nu.
\]
We can then compute the symbol of this operator:
\[
    \Symb[\boxplus^\lambda] = (1+xz)^\lambda \boxplus^\lambda (1+xw)^\lambda = (z + w + zwx)^\lambda = (zw)^\lambda (x + z^{-1} + w^{-1})^\lambda.
\]
Note that $\Symb[\boxplus^\lambda](z,w,-x)$ is real stable, and so $\boxplus^\lambda$ preserves real stability by Theorem \ref{bbbounded}.

With this, we compute the capacity for $\lambda = \delta_1 = (1,0,...,0)$:
\[
\begin{split}
    \cpc_{(\alpha,\beta,\gamma)} (z + t + ztx) &= \inf_{x,z,t > 0} \frac{z+t+ztx}{x^\alpha z^\beta t^\gamma} \\
        &= \inf_{x,z,t > 0} \frac{t^{-1}+z^{-1}+x}{x^\alpha z^{\beta-1} t^{\gamma-1}} \\
        &= \inf_{x,z,t > 0} \frac{t+z+x}{x^\alpha z^{1-\beta} t^{1-\gamma}}.
\end{split}
\]
Note that $(\alpha,\beta,\gamma)$ is in the Newton polytope of $(z+t+ztx)$ iff $\alpha = \beta+\gamma-1$. By Lemma \ref{capcompute}, we have:
\[
    \cpc_{(\alpha,\beta,\gamma)} (z + t + ztx) = \frac{1}{\alpha^\alpha (1-\beta)^{1-\beta} (1-\gamma)^{1-\gamma}}.
\]
We now generalize this to general $\lambda$, supposing $\alpha = \beta + \gamma - \lambda$:
\[
    \begin{split}
        \cpc_{(\alpha,\beta,\gamma)}&((z+t+ztx)^\lambda) \\
            &= \prod_{j=1}^n \left((\alpha_j/\lambda_j)^{\alpha_j/\lambda_j} (1-\beta_j/\lambda_j)^{1-\beta_j/\lambda_j} (1-\gamma_j/\lambda_j)^{1-\gamma_j/\lambda_j}\right)^{-\lambda_j} \\
            &= \prod_{j=1}^n (\alpha_j/\lambda_j)^{-\alpha_j} (1-\beta_j/\lambda_j)^{\beta_j-\lambda_j} (1-\gamma_j/\lambda_j)^{\gamma_j-\lambda_j} \\
            &= \alpha^{-\alpha} (\lambda-\beta)^{\beta-\lambda} (\lambda-\gamma)^{\gamma-\lambda} \lambda^{\alpha-\beta-\gamma+2\lambda} \\
            &= \frac{\lambda^\lambda}{\alpha^\alpha (\lambda-\beta)^{\lambda-\beta} (\lambda-\gamma)^{\lambda-\gamma}}.
    \end{split}
\]
Applying Theorem \ref{mainthmbounded}, we get:
\[
    \begin{split}
        \cpc_\alpha(p \boxplus^\lambda q) &\geq \frac{\beta^\beta \gamma^\gamma (\lambda-\beta)^{\lambda-\beta} (\lambda-\gamma)^{\lambda-\gamma} \cdot \lambda^\lambda}{\lambda^\lambda \lambda^\lambda \cdot \alpha^\alpha (\lambda-\beta)^{\lambda-\beta} (\lambda-\gamma)^{\lambda-\gamma}} \cdot \cpc_\beta(p) \cpc_\gamma(q) \\
            &= \frac{\beta^\beta \gamma^\gamma}{\alpha^\alpha \lambda^\lambda} \cpc_\beta(p) \cpc_\gamma(q).
    \end{split}
\]
Again, this is all under the assumption that $\alpha = \beta + \gamma - \lambda$. (We will be outside the Newton polytope otherwise, and so the result in that case will be trivial.) We state the result of this discussion as follows. Note that is can be seen as a sort of multiplicative reverse triangle inequality for capacity of differential operators.

\begin{corollary}
    Let $p$ and $q$ be two real stable polynomials of degree $\lambda$ with positive coefficients. We have:
    \[
        (\alpha^\alpha \cpc_\alpha(p \boxplus^\lambda q)) \geq \frac{1}{\lambda^\lambda} \cdot (\beta^\beta \cpc_\beta(p)) \cdot (\gamma^\gamma \cpc_\gamma(q)).
    \]
\end{corollary}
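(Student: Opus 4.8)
The plan is to simply collect the three computations that precede the statement in the excerpt and assemble them. First I would invoke Theorem \ref{mainthmbounded} with the linear operator $T := (\cdot \boxplus^\lambda q)$, acting on polynomials of degree $\lambda$; the input capacity vector is $\beta$ and the output capacity vector is $\alpha$ (with $\alpha = \beta + \gamma - \lambda$, since outside this case $\alpha \notin \Newt(p \boxplus^\lambda q)$ and both sides vanish, making the inequality trivial). Note that here $q$ must also be viewed as a fixed real stable polynomial of degree $\lambda$; to match the bilinear framework one regards $(\cdot \boxplus^\lambda \cdot)$ as a single linear operator on the tensor space $\R_+^{(\lambda,\lambda)}[x,y]$, so that $\Symb[\boxplus^\lambda] = (z + w + zwx)^\lambda$ as already computed, and a factor of $\cpc_\gamma(q)$ enters through the disjoint-product property of Proposition \ref{capclosure}.

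The main content is the capacity computation $\cpc_{(\alpha,\beta,\gamma)}((z+t+ztx)^\lambda) = \lambda^\lambda / \big(\alpha^\alpha (\lambda-\beta)^{\lambda-\beta}(\lambda-\gamma)^{\lambda-\gamma}\big)$, which I would carry out exactly as in the excerpt: first handle $\lambda = \delta_1$ by a change of variables $x,z,t \mapsto x, z^{-1}, t^{-1}$ turning the Laurent expression into the affine form $z + t + x$, to which Lemma \ref{capcompute} applies directly; then take the $\lambda_j$-th power in each coordinate (using the disjoint-product and power behavior of capacity) and simplify the exponents using $\alpha = \beta+\gamma-\lambda$. Plugging this into the right-hand side of Theorem \ref{mainthmbounded} — whose prefactor for the bounded-degree case reads $\frac{\beta^\beta(\lambda-\beta)^{\lambda-\beta}\gamma^\gamma(\lambda-\gamma)^{\lambda-\gamma}}{\lambda^\lambda\lambda^\lambda}$ (the product of the two single-variable-block prefactors since the symbol variables split as $(\beta$-block$, \gamma$-block$)$) — the factors $(\lambda-\beta)^{\lambda-\beta}$ and $(\lambda-\gamma)^{\lambda-\gamma}$ cancel against those in the capacity of the symbol, leaving
\[
    \cpc_\alpha(p \boxplus^\lambda q) \geq \frac{\beta^\beta \gamma^\gamma}{\alpha^\alpha \lambda^\lambda}\,\cpc_\beta(p)\,\cpc_\gamma(q),
\]
which rearranges to the stated form $\alpha^\alpha \cpc_\alpha(p\boxplus^\lambda q) \geq \lambda^{-\lambda}(\beta^\beta\cpc_\beta(p))(\gamma^\gamma\cpc_\gamma(q))$.

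The only real obstacle is bookkeeping: making sure the prefactor in Theorem \ref{mainthmbounded}, applied to an operator whose input space is the tensor product in $2n$ variables, genuinely splits as the product of a $\beta$-piece and a $\gamma$-piece, and that the capacity vector $(\alpha,\beta,\gamma)$ on the symbol $(z+t+ztx)^\lambda$ is ordered so that $\alpha$ pairs with the true output variables $x$ while $\beta,\gamma$ pair with the auxiliary variables $z,w$ (here renamed $z,t$). Once the variable roles are pinned down, everything is a routine exponent simplification, and the normalization $\alpha=\beta+\gamma-\lambda$ is exactly what makes the exponents collapse. I would close by remarking, as in the excerpt, that this is a multiplicative reverse triangle inequality: $\alpha^\alpha \cpc_\alpha$ of the ``convolution'' $p\boxplus^\lambda q$ is bounded below by (a normalized) product of the corresponding quantities for $p$ and $q$.
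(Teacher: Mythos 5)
Your proposal is correct and follows the same route as the paper: view $\boxplus^\lambda$ as a single linear operator on the tensor space $\R_+^{(\lambda,\lambda)}[x,y]$ with symbol $(z+w+zwx)^\lambda$, feed in the disjoint product $p(x)q(y)$ so that the input capacity splits as $\cpc_\beta(p)\cpc_\gamma(q)$, reduce the symbol-capacity computation to the $\lambda=\delta_1$ case by the $z,t\mapsto z^{-1},t^{-1}$ substitution and Lemma~\ref{capcompute}, and then cancel the $(\lambda-\beta)^{\lambda-\beta}(\lambda-\gamma)^{\lambda-\gamma}$ factors against the Theorem~\ref{mainthmbounded} prefactor under the Newton polytope constraint $\alpha=\beta+\gamma-\lambda$. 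Your initial phrasing (fixing $q$ and taking $T=(\cdot\boxplus^\lambda q)$) would give a $q$-dependent symbol and lose the clean formula, but you correctly self-correct to the tensor formulation, which is exactly what the paper does.
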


With this, we have given tight capacity bounds for all differential operators on polynomials of at most some fixed bounded degree. Note that root bounds of this form are given in \cite{finiteconvolutions} by Marcus, Spielman, and Srivastava, and these bounds are related to those obtained in their proof of the Kadison-Singer conjecture in \cite{marcus2015interlacing}. It is an open and interesting question whether or not capacity can be utilized to bound the roots of polynomials.


\section{The Main Inequalities} \label{mainineq}

We now discuss our main results and the inequalities we use to obtain them. These inequalities are bounds on certain inner products applied to polynomials. The most basic of these is the main result from \cite{anari2017generalization}, which applies to multiaffine polynomials. We extend their methods to obtain bounds on polynomials of all degrees. Finally, a limiting argument implies bounds for the $\mathcal{LP}_+$ class. This last bound can also be found in \cite{anari2017generalization}, but the proof we give here is simpler and makes clearer the connection between these inequalities and the Borcea-Br\"and\'en characterization.

\subsection{Inner Product Bounds, Bounded Degree}

For polynomials of some fixed bounded degree, we consider the following inner product.

\begin{definition}
    For fixed $\lambda \in \Z_+^n$ and $p,q \in \R^\lambda[x_1,...,x_n]$, define:
    \[
        \langle p, q \rangle^\lambda := \sum_{0 \leq \mu \leq \lambda} \binom{\lambda}{\mu}^{-1} p_\mu q_\mu.
    \]
\end{definition}

As mentioned above, Anari and Oveis~Gharan prove a bound on the above inner product for multiaffine polynomials in \cite{anari2017generalization}, and we state their result here without proof. We note though that the proof is essentially a consequence of the strong Rayleigh inequalities for real stable polynomials, which we now state. These fundamental inequalities (due to Br\"and\'en) should be seen as log-concavity conditions, and this intuition extends to all the inner product bounds we state here. And this intuition is not without evidence: the connection of capacity to the Alexandrov-Fenchel inequalities (see \cite{gurvits2006hyperbolic}), as well as to matroids and log-concave polynomials (see \cite{gurvits2009multivariate} and more recently \cite{anari2018log}), has been previously noted.

\begin{proposition}[Strong Rayleigh inequalities \cite{strongrayleigh}]
    For any real stable $p \in \R^{(1^n)}$ and any $i,j \in [n]$, we have the following inequality pointwise on all of $\R^n$:
    \[
        (\partial_{x_i} p) \cdot (\partial_{x_j} p) \geq p \cdot (\partial_{x_i}\partial_{x_j} p).
    \]
\end{proposition}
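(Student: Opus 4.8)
The plan is to collapse the pointwise $n$-variable inequality to a single scalar inequality, using that $p$ is multiaffine. If $i=j$ there is nothing to do: $p$ has degree at most $1$ in $x_i$, so $\partial_{x_i}\partial_{x_i}p=0$ and the claim reads $(\partial_{x_i}p)^2\geq 0$. So assume $i\neq j$ and fix an arbitrary $x\in\R^n$. Freeze every coordinate other than the $i$-th and $j$-th at its given real value, and let $q(u,v)$ be the resulting polynomial in $u=x_i$, $v=x_j$; by the specialization clause of Proposition~\ref{rsclosure}, $q$ is real stable, it is multiaffine since $p$ is, and because $\partial_{x_i}$ and $\partial_{x_j}$ commute with specialization in the remaining (disjoint) variables, the numbers $(\partial_{x_i}p)(x)$, $(\partial_{x_j}p)(x)$, $(\partial_{x_i}\partial_{x_j}p)(x)$ are exactly $\partial_u q$, $\partial_v q$, $\partial_u\partial_v q$ evaluated at $(x_i,x_j)$. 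Writing $q=q_0+q_1u+q_2v+q_3uv$ with $q_0,q_1,q_2,q_3\in\R$, a one-line expansion gives
\[
  (\partial_u q)(\partial_v q) - q\,(\partial_u\partial_v q) = (q_1+q_3v)(q_2+q_3u) - q_3(q_0+q_1u+q_2v+q_3uv) = q_1q_2 - q_0q_3,
\]
which is a \emph{constant}. Hence $(\partial_{x_i}p)(x)(\partial_{x_j}p)(x) - p(x)(\partial_{x_i}\partial_{x_j}p)(x) = q_1q_2 - q_0q_3$, and the whole statement reduces to the claim that every multiaffine real stable $q=q_0+q_1u+q_2v+q_3uv$ with real coefficients satisfies $q_1q_2\geq q_0q_3$.

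To prove that claim I would specialize $q$ at the single upper half-plane point $u=i$. If $q_2=q_3=0$ the inequality is just $0\geq 0$. Otherwise $q_2,q_3$ are real and not both zero, so $q_2+iq_3\neq 0$ and $q(i,v)=(q_0+iq_1)+(q_2+iq_3)v$ has degree exactly $1$ in $v$, with unique root
\[
  v^* = -\frac{q_0+iq_1}{q_2+iq_3} = -\frac{(q_0q_2+q_1q_3) + i\,(q_1q_2-q_0q_3)}{q_2^2+q_3^2}.
\]
Since $u=i\in\UHP$, real stability of $q$ forces $q(i,v)\neq 0$ for all $v$ with $\operatorname{Im}(v)>0$, i.e.\ $v^*\notin\UHP$, i.e.\ $\operatorname{Im}(v^*)\leq 0$; as $q_2^2+q_3^2>0$, this is precisely $q_1q_2-q_0q_3\geq 0$. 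Tracing back through the reduction and letting $x$ range over $\R^n$ finishes the proof.

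The argument is elementary and I do not expect a genuine obstacle, only careful bookkeeping: one must invoke that specialization at a real point preserves real stability (Proposition~\ref{rsclosure}) and that it commutes with the relevant derivatives (immediate, since they act on disjoint variables), and one must dispose of the degenerate cases $i=j$ and $q_2=q_3=0$. The one structurally important observation is that $(\partial_u q)(\partial_v q) - q\,(\partial_u\partial_v q)$ is constant precisely because $q$ is multiaffine; for higher-degree polynomials this quantity is genuinely nonconstant, and the analogous inequality must instead be routed through polarization (Proposition~\ref{rspol}) together with the multiaffine case established here — which is also why the hypothesis $p\in\R^{(1^n)}$ is present in the statement.
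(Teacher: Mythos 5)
The paper does not actually prove this proposition; it states it as a known result and cites Br\"and\'en's work \cite{strongrayleigh}, using it as a black box in the subsequent arguments. There is therefore no in-paper proof to compare against. Your argument is correct and self-contained: the reduction to $i\neq j$, the specialization of the remaining coordinates to real values (Proposition~\ref{rsclosure}), the observation that for multiaffine $q=q_0+q_1u+q_2v+q_3uv$ the Rayleigh difference $(\partial_u q)(\partial_v q)-q\,\partial_u\partial_v q$ collapses to the constant $q_1q_2-q_0q_3$, and the Hermite--Biehler-style step of substituting $u=i$ and reading off $\operatorname{Im}(v^*)\leq 0$ are all sound, and the degenerate cases ($i=j$; $q_2=q_3=0$, which also covers $q\equiv 0$) are correctly disposed of. This is in essence the standard two-variable specialization proof of the multiaffine strong Rayleigh inequality found in the literature, so you have reconstructed exactly the argument the paper outsources to its reference; it is elementary, as you note, and it also makes transparent why the hypothesis $p\in\R^{(1^n)}$ is there and how the higher-degree case would be routed through polarization (Proposition~\ref{rspol}).
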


We now state the Anari-Oveis~Gharan bound for multiaffine polynomials. They also prove a weaker bound on polynomials of any degree, but we will discuss this later.

\begin{theorem}[Anari-Oveis~Gharan]
    Let $p,q \in \R_+^{(1^n)}[x_1,...,x_n]$ be real stable. Then for any $\alpha \in \R_+^n$ we have:
    \[
        \langle p, q \rangle^{(1^n)} \geq \alpha^\alpha (1-\alpha)^{1-\alpha} \cpc_\alpha(p) \cpc_\alpha(q).
    \]
\end{theorem}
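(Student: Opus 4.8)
The plan is to reduce to a single-variable optimization via the log-exponential machinery of Fact \ref{convexcap}, and then to exploit the strong Rayleigh inequalities to control the relevant bilinear form. First I would rewrite the inner product $\langle p,q\rangle^{(1^n)} = \sum_{S\subseteq[n]} \binom{(1^n)}{S}^{-1} p_S q_S = \sum_S p_S q_S$ (since all binomial coefficients are $1$ in the multiaffine case) and recognize this as a natural pairing: $\langle p,q\rangle^{(1^n)} = \big(q(\partial_x)\,p\big)(0)$, i.e. apply $q$ with variables replaced by partial derivatives to $p$, then evaluate at $0$. The goal is to bound this from below by $\alpha^\alpha(1-\alpha)^{1-\alpha}\,\cpc_\alpha(p)\,\cpc_\alpha(q)$. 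By Fact \ref{convexcap}, $\cpc_\alpha(p) = \exp\inf_x(P(x)-\alpha\cdot x)$ and similarly for $q$, so it suffices to find, for each choice of a minimizing direction, a point where the pairing dominates the product of the two exponential-lower-bound surfaces.

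The key step is an induction on $n$, peeling off one variable at a time, which is the standard strategy for strong-Rayleigh-type arguments. Writing $p = p_0 + x_n p_1$ and $q = q_0 + x_n q_1$ (with $p_0,p_1,q_0,q_1 \in \R_+^{(1^{n-1})}$ real stable, by Proposition \ref{rsclosure}), one checks $\langle p,q\rangle^{(1^n)} = \langle p_0,q_0\rangle^{(1^{n-1})} + \langle p_1,q_1\rangle^{(1^{n-1})}$. The inductive hypothesis bounds each of the two terms on the right. To combine them one needs a lower bound for $\cpc_{\alpha'}(p_0)$ and $\cpc_{\alpha'}(p_1)$ (for suitable $\alpha'\in\R_+^{n-1}$) in terms of $\cpc_\alpha(p)$, which is exactly where Gurvits-type capacity-of-the-derivative bounds enter, together with the factor $\alpha_n^{\alpha_n}(1-\alpha_n)^{1-\alpha_n}$ coming from the univariate split $p(x_n) = p_0 + x_n p_1$ via Lemma \ref{capcompute} applied to $(y+x_n z)$. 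The strong Rayleigh inequalities $(\partial_{x_i}p)(\partial_{x_j}p)\geq p\,(\partial_{x_i}\partial_{x_j}p)$ are needed to ensure that the coefficient-wise products interact correctly — concretely, to guarantee that $p_1$ (a partial derivative of $p$) does not have too small a capacity relative to $p_0$ and $p$, so that the cross terms in the induction can be discarded rather than causing a loss.

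The main obstacle I anticipate is the bookkeeping in the inductive step: one must track how $\alpha$ is split across the variable being peeled off versus the remaining variables, and verify that the product $\prod_k \alpha_k^{\alpha_k}(1-\alpha_k)^{1-\alpha_k}$ telescopes correctly while the capacities of $p$ and $q$ are each only ``used up'' once. The delicate point is that a naive induction would double-count or lose the concavity slack; the resolution is to choose, at each stage, the optimal $x_n$ (depending on $\alpha_n$ and on the capacities of $p_0,p_1$) so that the AM–GM step $p_0 + x_n p_1 \geq \text{(something)} \cdot x_n^{\alpha_n}$ is tight in the right regime. Since the paper attributes this theorem to Anari–Gharan and states it without proof, I would expect the authors simply to cite \cite{anari2017generalization}; my reconstruction above is the route one would take to reprove it, and it is essentially the strong-Rayleigh induction that underlies all such capacity-log-concavity results.
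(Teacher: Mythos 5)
You are right that the paper does not prove this statement; it is cited from \cite{anari2017generalization}, and the authors only remark that the proof rests on the strong Rayleigh inequalities. So the question is whether your reconstruction would actually go through, and I think the inductive step as described does not close.

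You propose to write $p = p_0 + x_n p_1$, $q = q_0 + x_n q_1$, observe $\langle p,q\rangle^{(1^n)} = \langle p_0,q_0\rangle^{(1^{n-1})} + \langle p_1,q_1\rangle^{(1^{n-1})}$, apply the inductive hypothesis to each summand with some $\alpha' \in \R_+^{n-1}$, and then combine via an AM--GM and a lower bound for $\cpc_{\alpha'}(p_0), \cpc_{\alpha'}(p_1)$ in terms of $\cpc_\alpha(p)$. But the relationship runs in the \emph{opposite} direction. Pushing the inner infimum over $x_n$ inside the definition of $\cpc_\alpha(p)$ gives, for $\alpha' = (\alpha_1,\dots,\alpha_{n-1})$ and $t = \alpha_n$,
\[
    \alpha_n^{\alpha_n}(1-\alpha_n)^{1-\alpha_n}\,\cpc_\alpha(p) \;=\; \inf_{x'>0} \left(\frac{p_0(x')}{(x')^{\alpha'}}\right)^{1-t}\left(\frac{p_1(x')}{(x')^{\alpha'}}\right)^{t} \;\geq\; \cpc_{\alpha'}(p_0)^{1-t}\,\cpc_{\alpha'}(p_1)^{t},
\]
since $\inf(f^{1-t} g^{t}) \geq (\inf f)^{1-t}(\inf g)^{t}$ for nonnegative $f,g$. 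Applying this to both $p$ and $q$ and chaining with the $a+b \geq a^{1-t}b^t$ bound on $\langle p_0,q_0\rangle + \langle p_1,q_1\rangle$ yields only
\[
    \langle p,q\rangle^{(1^n)} \;\geq\; (\alpha')^{\alpha'}(1-\alpha')^{1-\alpha'}\,\cpc_{\alpha'}(p_0)^{1-t}\cpc_{\alpha'}(p_1)^{t}\cpc_{\alpha'}(q_0)^{1-t}\cpc_{\alpha'}(q_1)^{t},
\]
and the quantity on the right is \emph{at most} $(\alpha')^{\alpha'}(1-\alpha')^{1-\alpha'}\,\big(\alpha_n^{\alpha_n}(1-\alpha_n)^{1-\alpha_n}\big)^2 \cpc_\alpha(p)\cpc_\alpha(q)$, which carries an extra factor of $\alpha_n^{\alpha_n}(1-\alpha_n)^{1-\alpha_n} < 1$ compared to the claimed bound. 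Worse, your proposed repair---using strong Rayleigh to argue that $\cpc_{\alpha'}(p_1)$ is ``not too small relative to $\cpc_\alpha(p)$''---cannot succeed: take $p = 1 + x_1 + x_2$ (real stable, multiaffine), so $p_1 = \partial_{x_2} p = 1$, and for $\alpha_1 \in (0,1)$ we have $\cpc_{\alpha_1}(p_1) = 0$ even though $\cpc_{(\alpha_1,\alpha_2)}(p) > 0$ whenever $\alpha_1 + \alpha_2 < 1$. The inductive bound then degenerates to $\langle p,q\rangle \geq 0$, which proves nothing, while the statement is a genuine positive lower bound.

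Your reduction $\langle p,q\rangle^{(1^n)} = \sum_S p_S q_S = \big(q(\partial_x)p\big)(0)$ is correct, as is the decomposition of the inner product and the identification of strong Rayleigh as the key tool. But the mechanism by which strong Rayleigh enters must be different from what you describe---the actual Anari--Gharan argument does not proceed by bounding $\cpc_{\alpha'}(p_0)$ and $\cpc_{\alpha'}(p_1)$ separately from below (with the same $\alpha'$) in terms of $\cpc_\alpha(p)$, because no such uniform lower bound exists. If you want to push through a variable-by-variable induction, you would need to allow the $\alpha'$ in the inductive hypothesis to vary with the slice, or to keep the joint minimizer $x'$ of $p_0^{1-t}p_1^t/(x')^{\alpha'}$ rather than minimizing $p_0$ and $p_1$ independently; either route requires a quantitative statement about how far apart the minimizers can drift, and \emph{that} is where the strong Rayleigh inequality (as a log-concavity constraint on the Hessian of $P = \log p(\exp x)$) actually does its work.
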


In this paper, we generalize this to polynomials of degree $\lambda$ as follows. Note that this result is strictly stronger than the bound obtained in \cite{anari2017generalization} for the non-multiaffine case.

\begin{theorem} \label{boundbounded}
    Let $p,q \in \R_+^\lambda[x_1,...,x_n]$ be real stable. Then for any $\alpha \in \R_+^n$ we have:
    \[
        \langle p, q \rangle^{\lambda} \geq \frac{\alpha^\alpha (\lambda-\alpha)^{\lambda-\alpha}}{\lambda^\lambda} \cpc_\alpha(p) \cpc_\alpha(q).
    \]
\end{theorem}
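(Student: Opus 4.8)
The plan is to reduce the bounded-degree bound for $\langle p,q\rangle^\lambda$ to the multiaffine Anari-Gharan bound via polarization, exactly as polarization reduces stability and capacity statements to the multiaffine case (Propositions \ref{rspol} and \ref{cappol}). First I would observe that polarization is a linear isomorphism $\R^\lambda[x_1,\dots,x_n]\to\R^{(1^N)}[x_{1,1},\dots,x_{n,\lambda_n}]$ onto symmetric polynomials, where $N=\lambda_1+\cdots+\lambda_n$, and that it sends real stable polynomials to real stable polynomials. The key algebraic fact to establish is that polarization is (up to an explicit scalar) an \emph{isometry} between $\langle\cdot,\cdot\rangle^\lambda$ and $\langle\cdot,\cdot\rangle^{(1^N)}$. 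Concretely, if $p_\mu$ is the coefficient of $x^\mu$ in $p$, then the coefficient of a squarefree monomial in $\Pol^\lambda(p)$ indexed by a set $S$ lying over the multidegree $\mu$ is $p_\mu/\binom{\lambda}{\mu}$, and there are exactly $\binom{\lambda}{\mu}$ such sets $S$; so $\langle\Pol^\lambda(p),\Pol^\lambda(q)\rangle^{(1^N)} = \sum_{0\le\mu\le\lambda}\binom{\lambda}{\mu}\cdot\binom{\lambda}{\mu}^{-1}p_\mu\binom{\lambda}{\mu}^{-1}q_\mu = \sum_{0\le\mu\le\lambda}\binom{\lambda}{\mu}^{-1}p_\mu q_\mu = \langle p,q\rangle^\lambda$. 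This identity is the heart of the matter.

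Granting that identity, I would apply the Anari-Gharan multiaffine bound to $\Pol^\lambda(p)$ and $\Pol^\lambda(q)$ with the polarized vector $\widetilde\alpha := \Pol^\lambda(\alpha)$, whose entries over the block of $x_k$-variables are all equal to $\alpha_k/\lambda_k$. This yields
\[
    \langle p,q\rangle^\lambda = \langle\Pol^\lambda(p),\Pol^\lambda(q)\rangle^{(1^N)} \ge \widetilde\alpha^{\widetilde\alpha}(1-\widetilde\alpha)^{1-\widetilde\alpha}\,\cpc_{\widetilde\alpha}(\Pol^\lambda(p))\,\cpc_{\widetilde\alpha}(\Pol^\lambda(q)).
\]
By Proposition \ref{cappol} we have $\cpc_{\widetilde\alpha}(\Pol^\lambda(p)) = \cpc_\alpha(p)$ and likewise for $q$. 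It then remains to compute the scalar: since the $x_k$-block contributes $\lambda_k$ factors each equal to $(\alpha_k/\lambda_k)^{\alpha_k/\lambda_k}(1-\alpha_k/\lambda_k)^{1-\alpha_k/\lambda_k}$, we get
\[
    \widetilde\alpha^{\widetilde\alpha}(1-\widetilde\alpha)^{1-\widetilde\alpha} = \prod_k\left(\frac{\alpha_k}{\lambda_k}\right)^{\alpha_k}\left(\frac{\lambda_k-\alpha_k}{\lambda_k}\right)^{\lambda_k-\alpha_k} = \frac{\alpha^\alpha(\lambda-\alpha)^{\lambda-\alpha}}{\lambda^\lambda},
\]
which is precisely the claimed constant. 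One should also note the edge case where some $\alpha_k=0$ or $\alpha_k=\lambda_k$, handled by the usual convention $0^0=1$, and the case $\alpha\notin\Newt(p)$ where both sides behave acceptably since $\cpc_\alpha(p)=0$.

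The main obstacle I anticipate is verifying the isometry identity cleanly, i.e., correctly bookkeeping the combinatorial factors relating coefficients of $p$ to coefficients of $\Pol^\lambda(p)$ across all $n$ blocks simultaneously (the single-variable case is easy, but one must check the factors multiply correctly under the composition $\Pol^{\lambda_1}\circ\cdots\circ\Pol^{\lambda_n}$). A secondary point requiring a remark is that the Anari-Gharan theorem is stated for real stable multiaffine polynomials with non-negative coefficients, so one must confirm $\Pol^\lambda(p)$ has non-negative coefficients when $p$ does (immediate, since each coefficient of $\Pol^\lambda(p)$ is a positive multiple of a coefficient of $p$) and is real stable (Proposition \ref{rspol}). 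Everything else is a direct substitution and an elementary exponent calculation, matching the tone of the other applications in this paper.
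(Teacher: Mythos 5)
Your proposal is correct and follows exactly the paper's own route: reduce to the multiaffine Anari--Gharan bound via polarization, using the facts that $\langle\cdot,\cdot\rangle^\lambda$, capacity, and real stability all commute with polarization, and then simplify the resulting constant $\widetilde\alpha^{\widetilde\alpha}(1-\widetilde\alpha)^{1-\widetilde\alpha}$ to $\alpha^\alpha(\lambda-\alpha)^{\lambda-\alpha}/\lambda^\lambda$. The only cosmetic difference is that you verify the key isometry $\langle\Pol^\lambda(p),\Pol^\lambda(q)\rangle^{(1^N)}=\langle p,q\rangle^\lambda$ by direct multivariate coefficient bookkeeping, while the paper checks it on univariate monomials and then invokes the fact that $\Pol^\lambda$ is a composition of one-variable polarizations; both are correct and of comparable length.
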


The proof of this is essentially due to the fact that both $\langle \cdot, \cdot \rangle^\lambda$ and capacity interact nicely with polarization. We have already explicated the connection between capacity and polarization (see Proposition \ref{cappol}), and we now demonstrate how these inner products fit in.


\begin{lemma}
    Given $p,q \in \R^\lambda[x_1,...,x_n]$, we have:
    \[
        \langle p, q \rangle^\lambda = \left\langle \Pol^\lambda(p), \Pol^\lambda(q) \right\rangle^{(1^\lambda)}.
    \]
\end{lemma}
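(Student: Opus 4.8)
The plan is to reduce the claim to the single-variable case and then iterate, exactly mirroring the structure of the proof of Proposition~\ref{cappol}. First I would observe that $\Pol^\lambda$ is defined as a composition $\Pol^{\lambda_1} \circ \cdots \circ \Pol^{\lambda_n}$, where each $\Pol^{\lambda_k}$ acts only on the $k$-th variable, so it suffices to prove the statement when $n=1$ and $\Pol^\lambda$ is replaced by a single $\Pol^d$; the general case then follows by applying the one-variable identity once for each variable in succession, since polarizing different variables commutes and the inner products $\langle \cdot, \cdot \rangle^\lambda$ factor appropriately through the tensor structure of the monomial basis. So the core task is: for $q, r \in \R^d[x]$, show $\langle q, r \rangle^d = \langle \Pol^d(q), \Pol^d(r) \rangle^{(1^d)}$.

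For the one-variable case, the key is a clean formula for the coefficients of $\Pol^d(q)$ in terms of the coefficients of $q$. If $q = \sum_{j=0}^d q_j x^j$, then by definition $\Pol^d(q)$ is the unique symmetric multiaffine polynomial with $\Pol^d(q)(x,\dots,x) = q(x)$; writing $\Pol^d(q) = \sum_{S \subseteq [d]} c_S \, x^S$ with $c_S$ depending only on $|S|$ (by symmetry), say $c_S = \tilde c_{|S|}$, the condition $\Pol^d(q)(x,\dots,x) = q(x)$ forces $\binom{d}{j} \tilde c_j = q_j$, i.e. $\tilde c_j = \binom{d}{j}^{-1} q_j$. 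Then I would just expand both sides: the right-hand side is $\sum_{S \subseteq [d]} \binom{(1^d)}{\mathbf{1}_S}^{-1} (\Pol^d(q))_S (\Pol^d(r))_S = \sum_{S} 1 \cdot \tilde c_{|S|} \tilde c'_{|S|}$ (since $\binom{(1^d)}{\mathbf 1_S} = 1$), which equals $\sum_{j=0}^d \binom{d}{j} \tilde c_j \tilde c'_j = \sum_{j=0}^d \binom{d}{j} \binom{d}{j}^{-2} q_j r_j = \sum_{j=0}^d \binom{d}{j}^{-1} q_j r_j = \langle q, r \rangle^d$, as desired.

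For the multivariate reduction, the only mildly delicate bookkeeping point is checking that $\langle \cdot, \cdot \rangle^\lambda$ really does behave like a tensor product of the one-variable forms, so that polarizing one variable at a time is legitimate. Concretely, for $\lambda = (\lambda_1, \dots, \lambda_n)$ one has $\binom{\lambda}{\mu} = \prod_k \binom{\lambda_k}{\mu_k}$, so $\langle p, q \rangle^\lambda$ factors as an iterated sum over the coordinates of $\mu$; applying $\Pol^{\lambda_n}$ to the last variable converts the inner-most factor $\binom{\lambda_n}{\mu_n}^{-1}$-weighted sum into an unweighted sum over subsets of a size-$\lambda_n$ index set, exactly as in the one-variable computation, and leaves the other coordinates untouched. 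Repeating for $x_{n-1}, \dots, x_1$ yields the fully polarized inner product $\langle \Pol^\lambda(p), \Pol^\lambda(q) \rangle^{(1^\lambda)}$. I do not expect a serious obstacle here: the substantive content is the coefficient formula $\tilde c_j = \binom{d}{j}^{-1} q_j$ together with the observation that the multinomial coefficient $\binom{\lambda}{\mu}$ multiplies across coordinates, and everything else is routine expansion.
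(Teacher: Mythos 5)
Your proof is correct and takes essentially the same route as the paper: reduce to the univariate case via the composition $\Pol^\lambda = \Pol^{\lambda_1}\circ\cdots\circ\Pol^{\lambda_n}$ and the factorization $\binom{\lambda}{\mu}=\prod_k\binom{\lambda_k}{\mu_k}$, then use the coefficient formula $\Pol^d(x^j)=\binom{d}{j}^{-1}\sum_{|S|=j}x^S$ to match both sides. The paper carries out the univariate computation only on the monomial basis $\{x^k\}$ (appealing implicitly to bilinearity), whereas you expand general $q,r$; this is a cosmetic difference, not a different argument.
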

\begin{proof}
    We compute this equality on a basis in the univariate case. The result then follows since $\Pol^\lambda$ is a composition of polarizations on each variable of $p$. For $0 \leq k \leq m$ we have:
    \[
    \begin{split}
        \big\langle\Pol^m(x^k), \Pol^m(x^k)\big\rangle^{(1^m)} &= \binom{m}{k}^{-2} \sum_{S \in \binom{[m]}{k}} \langle x^S, x^S \rangle^{(1^m)} \\
            &= \binom{m}{k}^{-1} = \langle x^k, x^k \rangle^m.
    \end{split}
    \]
\end{proof}

The proof of Theorem \ref{boundbounded} then essentially follows from this algebraic identity.

\begin{proof}[Proof of Theorem \ref{boundbounded}.]
    Suppose that $p,q \in \R_+^\lambda[x_1,...,x_n]$ are real stable polynomials. Then $\Pol^\lambda(p)$ and $\Pol^\lambda(q)$ are real stable multiaffine polynomials by Proposition \ref{rspol}. We now use the multiaffine bound to prove the result for any $\alpha \in \R_+^n$. For simplicity, let $\beta := \Pol^\lambda(\alpha)$, where $\Pol^\lambda(\alpha)$ is originally defined in \S\ref{capss}. We have:
    \[
        \langle p, q \rangle^\lambda = \left\langle \Pol^\lambda(p), \Pol^\lambda(q) \right\rangle^{(1^\lambda)} \geq \beta^\beta (1-\beta)^{1-\beta} \cpc_\beta(\Pol^\lambda(p)) \cpc_\beta(\Pol^\lambda(q)).
    \]
    By Proposition \ref{cappol}, we have that $\cpc_\beta(\Pol^\lambda(p)) = \cpc_\alpha(p)$. So to complete the proof, we compute:
    \[
    \begin{split}
        \beta^\beta (1-\beta)^{1-\beta} &= \prod_{k=1}^n \prod_{j=1}^{\lambda_k} \left(\frac{\alpha_k}{\lambda_k}\right)^{\alpha_k/\lambda_k} \left(1 - \frac{\alpha_k}{\lambda_k}\right)^{1 - \alpha_k/\lambda_k} \\
            &= \prod_{k=1}^n \left(\frac{\alpha_k}{\lambda_k}\right)^{\alpha_k} \left(\frac{\lambda_k - \alpha_k}{\lambda_k}\right)^{\lambda_k - \alpha_k}.
    \end{split}
    \]
    This is precisely $\frac{\alpha^\alpha(\lambda-\alpha)^{\lambda-\alpha}}{\lambda^\lambda}$, which is what was claimed.
\end{proof}

\subsection{Inner Product Bounds, Unbounded Degree}

For general polynomials and power series in the $\mathcal{LP}_+$ class, we consider the following inner product.

\begin{definition}
    For $p,q \in \R[x_1,...,x_n]$ or power series in $x_1,...,x_n$, define:
    \[
        \langle p, q \rangle^\infty := \sum_{0 \leq \mu} \mu! p_\mu q_\mu.
    \]
    Note that this may not be well-defined for some power series.
\end{definition}

Consider the following power series in $x_1,...,x_n$, where $c_\mu \geq 0$:
\[
    f(x_1,...,x_n) = \sum_{0 \leq \mu} \frac{1}{\mu!} c_\mu x^\mu.
\]
Next consider the following weighted truncations of $f$:
\[
    f_\lambda(x) := \sum_{0 \leq \mu \leq \lambda} \binom{\lambda}{\mu} c_\mu x^\mu.
\]
If $f \in \mathcal{LP}_+[x_1,...,x_n]$, then $f_\lambda$ is real stable for all $\lambda$ and $f_\lambda(x/\lambda) \to f(x)$ uniformly on compact sets in $\C^n$ (see Theorem 5.1 in \cite{bb1}). The idea then is to limit capacity bounds for polynomials of some bounded degree to capacity bounds for general polynomials and functions in the $\mathcal{LP}_+$ class.

To do this, we need some kind of continuity result for capacity. Note that Fact \ref{newtoncap} implies $\cpc_\alpha(p)$ is \emph{not} continuous in $\alpha$ at the boundary of the Newton polytope of $p$. However, it turns out $\cpc_\alpha(p)$ is continuous in $p$, for the topology of uniform convergence on compact sets. This is discussed in \S\ref{continuity} more thoroughly, and we now state the main result from that section.

\newtheorem*{capcont}{Corollary \ref{capcont}}
\begin{capcont}
    Let $p_n$ be polynomials with nonnegative coefficients and $p$ analytic such that $p_n \to p$ uniformly on compact sets. For $\alpha \in \Newt(p)$, we have:
    \[
        \lim_{n \to \infty} \cpc_\alpha p_n = \cpc_\alpha p.
    \]
\end{capcont}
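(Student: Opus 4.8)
The plan is to prove this as a consequence of Fact \ref{newtoncap} together with the convexity statement in Fact \ref{convexcap}, by working with the log-exponential transform $P_n(x) := \log p_n(\exp(x))$ and $P(x) := \log p(\exp(x))$. Recall $\cpc_\alpha(p_n) = \exp\inf_{x\in\R^n}(P_n(x) - \alpha\cdot x)$ and similarly for $p$, so it suffices to show $\inf_x (P_n(x)-\alpha\cdot x) \to \inf_x(P(x)-\alpha\cdot x)$. Uniform convergence of $p_n \to p$ on compact sets, together with the fact that $p$ is analytic (hence locally bounded away from $0$ on the positive orthant, as its coefficients are limits of nonnegative reals so they too are nonnegative and $p\not\equiv 0$ since $\alpha \in \Newt(p)$ forces $p$ to have nonzero coefficients), gives that $P_n \to P$ uniformly on compact subsets of $\R^n$. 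This handles the ``easy'' direction: $\limsup_n \inf_x(P_n - \alpha\cdot x) \le \inf_x(P - \alpha\cdot x)$, by evaluating at any near-optimal point for $P$.

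The content is the reverse inequality, $\liminf_n \inf_x(P_n(x)-\alpha\cdot x) \ge \inf_x(P(x)-\alpha\cdot x)$, and the obstacle is that the infima are over the noncompact set $\R^n$, so pointwise/uniform-on-compacts convergence alone does not transfer an infimum. The key step is a uniform coercivity estimate: since $\alpha \in \Newt(p)$, write $\alpha$ as a convex combination $\alpha = \sum_{\mu\in S} c_\mu \mu$ with $S \subseteq \supp(p)$, $c_\mu > 0$. Because $p_n \to p$, for $n$ large each $\mu \in S$ lies in $\supp(p_n)$ with coefficient $(p_n)_\mu \ge \tfrac12 p_\mu > 0$, so $\alpha \in \Newt(p_n)$ and the AM-GM argument from the proof of Fact \ref{newtoncap} gives, for all $x > 0$,
\[
    \frac{p_n(x)}{x^\alpha} \ge \prod_{\mu\in S}\left(\frac{(p_n)_\mu}{c_\mu}\right)^{c_\mu} \ge \prod_{\mu\in S}\left(\frac{p_n)_\mu/2}{c_\mu}\right)^{c_\mu} =: \kappa > 0,
\]
where $\kappa$ can be taken independent of $n$ (for $n$ large) since $(p_n)_\mu \to p_\mu$. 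Slightly more is needed: I want a single compact set $K$ (independent of $n$) outside of which $P_n(x) - \alpha\cdot x$ exceeds $\inf_x(P(x)-\alpha\cdot x) + 1$. For this, pick two points $\nu_1,\nu_2 \in \supp(p)$ (or more, affinely spanning a neighborhood structure around $\alpha$ inside $\Newt(p)$; if $\alpha$ is in the interior this is immediate, and the boundary case is handled by restricting to the minimal face containing $\alpha$) so that the terms $(p_n)_{\nu_i} e^{\nu_i\cdot x}$ alone force $p_n(x) \ge (\text{const})\cdot x^\alpha \cdot e^{\epsilon\|x\|}$ for $\|x\|$ large, uniformly in $n$; this uses only that these coefficients are bounded below for large $n$. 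Consequently $\inf_x(P_n - \alpha\cdot x) = \inf_{x\in K}(P_n - \alpha\cdot x)$ for all large $n$, and now uniform convergence of $P_n \to P$ on the compact set $K$ finishes the argument: $\inf_{x\in K}(P_n - \alpha\cdot x) \to \inf_{x\in K}(P - \alpha\cdot x) = \inf_x(P-\alpha\cdot x)$, the last equality because the same coercivity bound (with $p$ in place of $p_n$) shows the global infimum of $P - \alpha\cdot x$ is also attained in $K$.

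The main obstacle, then, is producing the \emph{uniform-in-$n$} coercivity / compact-containment of the minimizers; everything else is bookkeeping with Fact \ref{convexcap} and uniform convergence. A clean way to package the coercivity is: fix any $\delta > 0$; since $\alpha$ lies in the relative interior of the face $F$ of $\Newt(p)$ that contains it, there are finitely many $\mu \in \supp(p)\cap F$ and a representation making $\alpha \pm \delta e_i$ (projected onto the affine hull of $F$) still a convex combination of points of $\supp(p)$ for small $\delta$; applying the AM-GM bound to $p_n(x)/x^{\alpha \pm \delta e_i}$ gives linear growth of $P_n(x) - \alpha\cdot x$ in each coordinate direction within $\mathrm{aff}(F)^\perp$-complement, with constants converging as $n\to\infty$, hence a uniform compact bound on argmins. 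I would spell this out only in the interior case in the main text and remark that the boundary case follows by passing to the face, since capacity only sees $\Newt(p)$.
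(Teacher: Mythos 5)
Your interior-case argument is sound and close in spirit to the paper's: both reduce to a fixed compact set on which uniform convergence of $P_n \to P$ finishes. You get there by a direct AM-GM coercivity bound (representing $\alpha \pm \delta e_i$ as convex combinations of $\supp(p)$, which forces $P_n(y) - \alpha\cdot y$ to grow linearly in each $|y_i|$ uniformly in $n$), whereas the paper first shows $\inf P$ is attained on a compact convex $K$, encloses it in $K'$ with $P \geq \inf P + c_0$ on $\partial K'$, and then uses \emph{convexity} of $P_n$ to push $P_n \geq \inf P + c_0/2$ from $\partial K'$ to all of $\R^n \setminus K'$. Both routes work; yours is more hands-on, the paper's is slicker and needs fewer explicit constants.

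The genuine gap is the boundary case, which you explicitly defer to ``a remark that the boundary case follows by passing to the face.'' That is the right heuristic, but it is not a remark. Two things go wrong if you try to carry it out directly. First, the minimal face $F \ni \alpha$ is generically not axis-aligned, and the restriction $p_F := \sum_{\mu \in \supp(p)\cap F} p_\mu x^\mu$ has a Newton polytope with empty interior in $\R^n$, so your interior argument does not apply to $p_F$; you must pass to the quotient by $\mathrm{aff}(F)^{\perp}$, i.e.\ change coordinates. But rotating $\supp(p)$ takes you out of $\Z_+^n$, so $P(A^{-1}\cdot)$ is no longer a log of a polynomial --- this is exactly why the paper sets up the machinery of ``continuous log-generating functions,'' applies an $SO_n(\R)$ rotation $A$ placing $\Newt(A\cdot P)$ in $\{\kappa_n \geq 0\}$, sends $x_n \to -\infty$, and then inducts on the number of variables. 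Second, you also need a truncation step: for the induction to apply to $A\cdot P_n$ you need $\Newt(p_n) \subseteq \Newt(p)$ eventually, which need not hold as stated, so the paper replaces $p_n$ by $q_n :=$ the terms of $p_n$ supported on $\supp(p)$ (using nonnegativity so that $\cpc_\alpha p_n \geq \cpc_\alpha q_n$). Your phrase ``capacity only sees $\Newt(p)$'' is also not quite the lemma you need --- capacity depends on the coefficients, and the correct statement is that $\cpc_\alpha(p)$ depends only on the terms of $p$ supported on $F$. In short: the interior case in your write-up is fine (modulo the $\|x\|$-vs-$\|\log x\|$ slip in the coercivity display), but the boundary case needs the rotation-and-induction argument, or an equivalent explicit reduction; it cannot be dispatched in a remark.
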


We now demonstrate the link between the bounded and unbounded degree inner products, and we will use this to obtain bounds on the latter via limiting.

\begin{lemma}
    Let $f$ and $f_\lambda$ be defined as above. For any $p \in \R_+[x_1,...,x_n]$ we have:
    \[
        \lim_{\lambda \to \infty} \langle f_\lambda, p \rangle^\lambda = \langle f, p \rangle^\infty.
    \]
\end{lemma}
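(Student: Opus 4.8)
The plan is to write out both sides explicitly in terms of the coefficients $p_\mu$ and $c_\mu$ and then reduce the claim to a termwise limit, justified by an absolute-convergence/dominated-convergence argument. First I would expand the left-hand side: since $f_\lambda(x) = \sum_{0 \le \mu \le \lambda} \binom{\lambda}{\mu} c_\mu x^\mu$, its coefficient of $x^\mu$ is $\binom{\lambda}{\mu} c_\mu$ when $\mu \le \lambda$ and $0$ otherwise. Hence, using the definition of $\langle \cdot, \cdot \rangle^\lambda$,
\[
    \langle f_\lambda, p \rangle^\lambda = \sum_{0 \le \mu \le \lambda} \binom{\lambda}{\mu}^{-1} \left(\binom{\lambda}{\mu} c_\mu\right) p_\mu = \sum_{0 \le \mu \le \lambda} c_\mu p_\mu.
\]
Wait — this is not quite $\langle f, p\rangle^\infty$, so the weighting must be handled more carefully: the intended normalization is that $f_\lambda$ is compared against $p$ via the inner product whose weights are $\binom{\lambda}{\mu}^{-1}$, and in the statement of the preceding lemma one uses $f_\lambda(x/\lambda)$, not $f_\lambda(x)$, as the object converging to $f$. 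So more precisely I would track the coefficients of whichever normalization makes the bounded-degree capacity bound (Theorem \ref{boundbounded}) applicable, and verify that after the dust settles the coefficient of $x^\mu$ in $\langle f_\lambda, p\rangle^\lambda$ equals $\frac{\lambda!}{(\lambda-\mu)!\,\lambda^\mu}\, c_\mu p_\mu$ (or the analogous falling-factorial ratio), and that $\frac{\lambda!}{(\lambda-\mu)!\,\lambda^\mu} \to 1$ as $\lambda \to \infty$ coordinatewise for each fixed $\mu$.

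Granting that the $\mu$-th summand converges to $\mu!\, c_\mu p_\mu$ (this is exactly the coefficient appearing in $\langle f, p\rangle^\infty = \sum_\mu \mu!\, c_\mu p_\mu$), the remaining work is to exchange the limit in $\lambda$ with the (finite, since $p$ is a polynomial) sum over $\mu$. Here $p$ being an honest polynomial is the crucial simplification: the sum $\sum_\mu$ ranges only over $\mu \in \supp(p)$, a finite set, so no dominated-convergence machinery is needed at all — once $\lambda$ is large enough that $\supp(p) \subseteq \{0 \le \mu \le \lambda\}$, the left-hand side is a finite sum of finitely many terms each converging individually, and the limit of the sum is the sum of the limits. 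This is the entire content once the coefficient bookkeeping is correct.

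The main obstacle is therefore not analytic but purely notational: getting the normalization conventions straight so that the falling-factorial ratio $\lambda!/((\lambda-\mu)!\lambda^\mu)$ (or its multivariate product form $\prod_k \lambda_k!/((\lambda_k-\mu_k)!\lambda_k^{\mu_k})$) appears correctly, and confirming it tends to $1$. Concretely, for a single coordinate, $\frac{\lambda!}{(\lambda-\mu)!\,\lambda^\mu} = \prod_{j=0}^{\mu-1}\frac{\lambda-j}{\lambda} = \prod_{j=0}^{\mu-1}\left(1 - \tfrac{j}{\lambda}\right) \to 1$ as $\lambda \to \infty$, with $\mu$ fixed; the multivariate statement is the product of these over the coordinates. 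Once this elementary limit is in hand the lemma follows immediately, and I would present it in three short steps: (1) identify the coefficient of $x^\mu$ on the left as a falling-factorial multiple of $c_\mu p_\mu$; (2) observe $\supp(p)$ is finite, so the sum is finite and the limit passes inside; (3) invoke the elementary limit of the ratio to land on $\sum_\mu \mu!\, c_\mu p_\mu = \langle f, p\rangle^\infty$.
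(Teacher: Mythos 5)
Your very first computation,
\[
  \langle f_\lambda, p\rangle^\lambda
    = \sum_{0\le\mu\le\lambda}\binom{\lambda}{\mu}^{-1}\Bigl(\binom{\lambda}{\mu}c_\mu\Bigr)p_\mu
    = \sum_{0\le\mu\le\lambda}c_\mu p_\mu,
\]
is already the whole proof, and it is exactly what the paper does. The ``Wait --- this is not quite $\langle f,p\rangle^\infty$'' that follows is a bookkeeping error: you have conflated the weighted coefficients $c_\mu$ with the actual Taylor coefficients $f_\mu$ of $f$. Since $f(x)=\sum_\mu \tfrac{1}{\mu!}c_\mu x^\mu$, the coefficient of $x^\mu$ in $f$ is $f_\mu = c_\mu/\mu!$, so
\[
  \langle f,p\rangle^\infty \;=\; \sum_\mu \mu!\, f_\mu p_\mu \;=\; \sum_\mu c_\mu p_\mu,
\]
not $\sum_\mu \mu!\,c_\mu p_\mu$ as you write at the end. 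Thus the two sides already agree term-by-term with no correction factor at all; the binomials cancel exactly, and the only remaining observation is the one you do make, namely that $\supp(p)$ is finite so the truncated sum stabilizes once $\lambda$ dominates $\supp(p)$ coordinatewise.

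The entire falling-factorial digression --- the claim that the $\mu$-th summand of $\langle f_\lambda,p\rangle^\lambda$ is $\tfrac{\lambda!}{(\lambda-\mu)!\lambda^\mu}c_\mu p_\mu$ and the passage to $f_\lambda(x/\lambda)$ --- is a red herring here. The rescaling $f_\lambda(x/\lambda)\to f(x)$ and the $\lambda^{-\alpha}$ factors are relevant to the \emph{capacity} estimate in the subsequent Theorem~\ref{boundtrans}, not to this purely algebraic inner-product identity. If you drop everything after your first display, fix the evaluation of $\langle f,p\rangle^\infty$, and retain the observation that $p$ has finite support, you recover the paper's two-line proof.
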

\begin{proof}
    Letting $c_\mu$ denote the weighted coefficients of $f$ and $f_\lambda$ as above, we compute:
    \[
        \lim_{\lambda \to \infty} \langle f_\lambda, p \rangle^\lambda = \lim_{\lambda \to \infty} \sum_{0 \leq \mu \leq \lambda} c_\mu p_\mu = \sum_{0 \leq \mu} c_\mu p_\mu = \langle f, p \rangle^\infty.
    \]
    Notice that the limit here is guaranteed to exist, since $p$ has finite support.
\end{proof}


With this, we can bootstrap our capacity bound on $\langle \cdot, \cdot \rangle^\lambda$ to get a bound on $\langle \cdot, \cdot \rangle^\infty$. Notice here that we achieve the same bound as Anari and Oveis~Gharan in \cite{anari2017generalization}, albeit with a simpler proof.

\begin{theorem}[Anari-Oveis~Gharan] \label{boundtrans}
    Fix $f \in \mathcal{LP}_+[x_1,...,x_n]$ and any real stable $p \in \R_+[x_1,...,x_n]$. Then for any $\alpha \in \R_+^n$ we have:
    \[
        \langle f, p \rangle^\infty \geq \alpha^\alpha e^{-\alpha} \cpc_\alpha(f) \cpc_\alpha(p).
    \]
\end{theorem}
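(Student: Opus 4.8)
The plan is to obtain the unbounded-degree bound as a limit of the bounded-degree bound from Theorem~\ref{boundbounded}. First I would set up the approximation: given $f \in \mathcal{LP}_+[x_1,\ldots,x_n]$, write $f(x) = \sum_\mu \frac{1}{\mu!} c_\mu x^\mu$ with $c_\mu \ge 0$, and form the weighted truncations $f_\lambda(x) = \sum_{0 \le \mu \le \lambda} \binom{\lambda}{\mu} c_\mu x^\mu$. As noted just before the statement (citing Theorem~5.1 of \cite{bb1}), each $f_\lambda$ is real stable with nonnegative coefficients, and $f_\lambda(x/\lambda) \to f(x)$ uniformly on compact sets in $\C^n$. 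The polynomial $p$ is already of some fixed finite degree, so for all large $\lambda$ (coordinatewise) we have $p \in \R_+^\lambda[x_1,\ldots,x_n]$, and Theorem~\ref{boundbounded} applies to the pair $f_\lambda, p$.

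The main steps are then three routine limit computations. Fixing a target vector $\alpha \in \R_+^n$ with $\alpha \in \Newt(f)$ (otherwise $\cpc_\alpha(f) = 0$ by Fact~\ref{newtoncap} and the bound is trivial), I would:
(i) evaluate the left-hand side in the limit — by the lemma immediately preceding this theorem, $\langle f_\lambda, p \rangle^\lambda \to \langle f, p \rangle^\infty$, the limit existing because $p$ has finite support;
(ii) handle the constant $\frac{\alpha^\alpha(\lambda-\alpha)^{\lambda-\alpha}}{\lambda^\lambda}$ — here I would first rescale. Using the external-field property of Proposition~\ref{capclosure}, $\cpc_\alpha(f_\lambda(x/\lambda)) = \lambda^{-\alpha}\cpc_\alpha(f_\lambda)$, so applying Theorem~\ref{boundbounded} to $f_\lambda$ and then substituting $x \mapsto x/\lambda$ inside, the factor $\lambda^{-\alpha}$ that one might expect is absorbed; concretely $\frac{\alpha^\alpha(\lambda-\alpha)^{\lambda-\alpha}}{\lambda^\lambda}\lambda^{\alpha}$. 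Writing $\lambda = (N,\ldots,N)$ for scalar $N \to \infty$ (or componentwise), one computes $\left(\frac{\lambda-\alpha}{\lambda}\right)^{\lambda-\alpha} = \prod_k\left(1 - \tfrac{\alpha_k}{\lambda_k}\right)^{\lambda_k - \alpha_k} \to e^{-\alpha}$, which is exactly the $e^{-\alpha}$ appearing in the statement, and $\alpha^\alpha$ persists, while the stray $\lambda$-powers cancel as indicated;
(iii) apply the continuity result Corollary~\ref{capcont} to conclude $\cpc_\alpha(f_\lambda(x/\lambda)) \to \cpc_\alpha(f)$, using $\alpha \in \Newt(f)$, so that $\lambda^{-\alpha}\cpc_\alpha(f_\lambda) \to \cpc_\alpha(f)$. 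Since $\cpc_\alpha(p)$ does not depend on $\lambda$, passing to the limit in the inequality $\langle f_\lambda, p\rangle^\lambda \ge \frac{\alpha^\alpha(\lambda-\alpha)^{\lambda-\alpha}}{\lambda^\lambda}\cpc_\alpha(f_\lambda)\cpc_\alpha(p)$ (after the rescaling) yields $\langle f, p\rangle^\infty \ge \alpha^\alpha e^{-\alpha}\cpc_\alpha(f)\cpc_\alpha(p)$.

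The step I expect to require the most care is (ii)/(iii) combined: correctly tracking the rescaling $x \mapsto x/\lambda$ so that the $\lambda^{-\alpha}$ factors match up and the bounded-degree constant genuinely converges to $\alpha^\alpha e^{-\alpha}$ rather than to something off by a power of $\lambda$ — this is precisely the remark the authors flag as ``the proof of Theorem~\ref{boundtrans} shows why the extra $\lambda^{-\alpha}$ factor disappears.'' The other subtlety is ensuring the hypotheses of Corollary~\ref{capcont} are met, i.e.\ that $\alpha$ lies in $\Newt(f)$ and not merely in the Newton polytopes of the $f_\lambda$; but since $\supp(f_\lambda) \subseteq \supp(f)$ and $\Newt(f) = \overline{\bigcup_\lambda \Newt(f_\lambda)}$ in the relevant sense, and the trivial case $\alpha \notin \Newt(f)$ is dispatched separately, this causes no real difficulty. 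Everything else is bookkeeping with the already-established facts.
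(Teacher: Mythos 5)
Your proposal follows the paper's proof essentially step for step: apply the bounded-degree inner product bound (Theorem~\ref{boundbounded}) to the pair $(f_\lambda, p)$, use the preceding lemma to identify $\lim_\lambda \langle f_\lambda, p\rangle^\lambda = \langle f, p\rangle^\infty$, rewrite $\cpc_\alpha(f_\lambda) = \lambda^\alpha \cpc_\alpha(f_\lambda(x/\lambda))$ so the constant becomes $\alpha^\alpha (\lambda-\alpha)^{\lambda-\alpha}/\lambda^{\lambda-\alpha}$, invoke Corollary~\ref{capcont} for $\cpc_\alpha(f_\lambda(x/\lambda)) \to \cpc_\alpha(f)$, and compute $\lim_\lambda ((\lambda-\alpha)/\lambda)^{\lambda-\alpha} = e^{-\alpha}$. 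This is the paper's argument, with the added (correct and welcome) care of flagging that $\alpha \in \Newt(f)$ is needed for the continuity step and that the complementary case is trivial.
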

\begin{proof}
    As above, we write:
    \[
        f(x) = \sum_{0 \leq \mu} \frac{1}{\mu!} c_\mu x^\mu,
        ~~~~~~~~~~
        f_\lambda(x) = \sum_{0 \leq \mu \leq \lambda} \binom{\lambda}{\mu} c_\mu x^\mu.
    \]
    By the previous lemma, we have:
    \[
    \begin{split}
        \langle f, p \rangle^\infty &= \lim_{\lambda \to \infty} \langle f_\lambda, p \rangle^\lambda \\
            &\geq \lim_{\lambda \to \infty} \left[\frac{\alpha^\alpha(\lambda-\alpha)^{\lambda-\alpha}}{\lambda^\lambda} \cpc_\alpha(f_\lambda) \cpc_\alpha(p)\right] \\
            &= \alpha^\alpha \cpc_\alpha(p) \cdot \lim_{\lambda \to \infty} \left[\frac{(\lambda-\alpha)^{\lambda-\alpha}}{\lambda^\lambda} \cdot \inf_{x > 0} \frac{f_\lambda(x/\lambda)}{(x/\lambda)^\alpha}\right] \\
            &= \alpha^\alpha \cpc_\alpha(p) \cdot \lim_{\lambda \to \infty} \left[\frac{(\lambda-\alpha)^{\lambda-\alpha}}{\lambda^{\lambda-\alpha}} \cdot \cpc_\alpha(f_\lambda(x/\lambda))\right].
    \end{split}
    \]
    Notice that $\lim_{\lambda \to \infty} \cpc_\alpha(f_\lambda(x/\lambda)) = \cpc_\alpha(f)$ by Corollary \ref{capcont}. So we just need to compute the limit of the scaling factor:
    \[
        \lim_{\lambda \to \infty} \left(\frac{\lambda-\alpha}{\lambda}\right)^{\lambda-\alpha} = \lim_{\lambda \to \infty} \prod_{k=1}^n \left(1 - \frac{\alpha_k}{\lambda_k}\right)^{\lambda_k - \alpha_k} = \prod_{k=1}^n e^{-\alpha_k} = e^{-\alpha}.
    \]
    This completes the proof.
\end{proof}

\begin{corollary}
    Fix $f,g \in \mathcal{LP}_+[x_1,...,x_n]$. For any $\alpha \in \R_+^n$ we have:
    \[
        \langle f, g \rangle^\infty \geq \alpha^\alpha e^{-\alpha} \cpc_\alpha(f) \cpc_\alpha(g).
    \]
\end{corollary}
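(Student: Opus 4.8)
The plan is to bootstrap Theorem \ref{boundtrans}, which already gives the desired bound when the second argument is an honest real stable polynomial, by approximating $g$ with the real stable truncations that the Borcea--Br\"and\'en machinery attaches to it. First I would dispose of the degenerate case: if $\alpha \notin \Newt(f)$ or $\alpha \notin \Newt(g)$, then by Fact \ref{newtoncap} one of the capacities on the right-hand side vanishes, while $\langle f,g\rangle^\infty = \sum_{0 \leq \mu} \mu!\, f_\mu g_\mu$ is a sum of nonnegative terms (coefficients of $\mathcal{LP}_+$ functions are nonnegative), hence $\geq 0$, so the inequality is trivial. So assume $\alpha \in \Newt(f) \cap \Newt(g)$.

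Now write $g(x) = \sum_{0 \leq \mu} \frac{1}{\mu!} d_\mu x^\mu$ with $d_\mu \geq 0$, and let $g_\lambda(x) := \sum_{0 \leq \mu \leq \lambda} \binom{\lambda}{\mu} d_\mu x^\mu$ be its weighted truncations. Since $g \in \mathcal{LP}_+$, each $g_\lambda$ is a real stable polynomial with nonnegative coefficients and $g_\lambda(x/\lambda) \to g$ uniformly on compact sets (Theorem 5.1 of \cite{bb1}); in particular $g_\lambda(x/\lambda)$ is a real stable element of $\R_+[x_1,\ldots,x_n]$, so Theorem \ref{boundtrans} applies with this polynomial in the role of $p$:
\[
    \langle f, g_\lambda(x/\lambda) \rangle^\infty \geq \alpha^\alpha e^{-\alpha}\, \cpc_\alpha(f)\, \cpc_\alpha\big(g_\lambda(x/\lambda)\big).
\]
On the right-hand side $\cpc_\alpha(f)$ is a fixed constant, and Corollary \ref{capcont} gives $\cpc_\alpha(g_\lambda(x/\lambda)) \to \cpc_\alpha(g)$ as $\lambda \to \infty$ (using $\alpha \in \Newt(g)$). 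So everything reduces to showing the left-hand side converges to $\langle f, g \rangle^\infty$.

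For this, expand: with $f_\mu = c_\mu/\mu!$ one computes $\langle f, g_\lambda(x/\lambda) \rangle^\infty = \sum_{0 \leq \mu \leq \lambda} c_\mu d_\mu \binom{\lambda}{\mu} \lambda^{-\mu}$, and $\binom{\lambda}{\mu} \lambda^{-\mu} = \frac{1}{\mu!} \prod_k \prod_{i=0}^{\mu_k - 1}\big(1 - \tfrac{i}{\lambda_k}\big)$ increases to $\frac{1}{\mu!}$ as each $\lambda_k \to \infty$. Since all terms $c_\mu d_\mu \geq 0$, monotone convergence yields $\langle f, g_\lambda(x/\lambda) \rangle^\infty \nearrow \sum_{0 \leq \mu} \frac{c_\mu d_\mu}{\mu!} = \langle f, g \rangle^\infty$ (which in particular shows this inner product is well-defined in $[0,\infty]$ as a supremum of finite quantities). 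Combining the three facts and letting $\lambda \to \infty$ proves the corollary. The only real subtlety is this last step: $\langle \cdot, \cdot \rangle^\infty$ weights the $\mu$-th coefficient by $\mu!$, so it is not continuous under uniform-on-compacts convergence, and an arbitrary approximating sequence for $g$ would not suffice; the point is that the specific rescaled truncations $g_\lambda(x/\lambda)$ make the coefficient-wise approximation monotone from below, which is exactly what neutralizes the difficulty.
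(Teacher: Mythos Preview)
Your proof is correct and follows the same approach as the paper: approximate $g$ by real stable polynomials, apply Theorem \ref{boundtrans}, and pass to the limit via Corollary \ref{capcont}. Your treatment is in fact more careful than the paper's one-line argument, since you correctly identify that $\langle \cdot,\cdot\rangle^\infty$ is not continuous under uniform-on-compacts convergence and resolve this by using the specific rescaled truncations $g_\lambda(x/\lambda)$, for which the inner product converges monotonically from below.
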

\begin{proof}
    Apply the previous theorem and Corollary \ref{capcont} to a sequence of real stable polynomials $g_k \to g$.
\end{proof}

\subsection{From Inner Products to Linear Operators}

The main purpose of this section, aside from proving the main technical result of the paper, is to demonstrate the power of a certain interpretation of the symbol of a linear operator. We will show that a simple observation regarding the symbol (which is explicated in more detail in \cite{leake2017representation}) will immediately enable us to transfer inner product bounds to bounds on linear operators. We now state this observation, which could be considered as a more algebraic definition of the symbol.

\begin{lemma}
    Let $\langle \cdot, \cdot \rangle$ be either $\langle \cdot, \cdot \rangle^\lambda$ or $\langle \cdot, \cdot \rangle^\infty$, and let $\Symb$ be either $\Symb^\lambda$ or $\Symb^\infty$, respectively. Let $T$ be a linear operator on polynomials of appropriate degree, and let $p,q \in \R_+[x_1,...,x_n]$ be polynomials of appropriate degree. Then we have the following, where the inner product acts on the $z$ variables:
    \[
        T[p](x) = \langle \Symb[T](z,x), p(z) \rangle.
    \]
\end{lemma}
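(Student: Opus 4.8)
The claim is an identity, so the natural approach is to verify it on a basis for the input polynomial space and then extend by bilinearity. I would reduce to checking the formula when $p(z) = z^\nu$ is a single monomial (with $\nu \leq \lambda$ in the bounded case, or arbitrary $\nu$ in the transcendental case), since both sides are linear in $p$.

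First I would expand $\Symb[T](z,x)$ using its definition. In the bounded-degree case, $\Symb^\lambda[T](z,x) = T[(1+xz)^\lambda] = \sum_{0 \leq \mu \leq \lambda} \binom{\lambda}{\mu} z^\mu\, T(x^\mu)$, so the coefficient of $z^\mu$ (as a polynomial in $x$) is exactly $\binom{\lambda}{\mu} T(x^\mu)$. Then, pairing against $p(z) = z^\nu$ in the $z$ variables using $\langle \cdot, \cdot \rangle^\lambda$, only the $\mu = \nu$ term survives, and the inner product contributes a factor $\binom{\lambda}{\nu}^{-1}$ multiplying the matched coefficients. So $\langle \Symb^\lambda[T](z,x), z^\nu \rangle^\lambda = \binom{\lambda}{\nu}^{-1} \cdot \binom{\lambda}{\nu} T(x^\nu) = T(x^\nu) = T[p](x)$, as desired. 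The transcendental case is identical in spirit: $\Symb^\infty[T](z,x) = T[e^{x\cdot z}] = \sum_{\mu} \frac{1}{\mu!} z^\mu T(x^\mu)$, the coefficient of $z^\mu$ is $\frac{1}{\mu!} T(x^\mu)$, and pairing with $z^\nu$ via $\langle \cdot, \cdot \rangle^\infty$ introduces a factor $\nu!$, giving $\nu! \cdot \frac{1}{\nu!} T(x^\nu) = T(x^\nu)$.

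The only subtlety I would be careful about is bookkeeping with the two sets of variables: $\Symb[T](z,x)$ lives in $z_1,\dots,z_n,x_1,\dots,x_m$, and the inner product is declared to act only on the $z$ variables, treating the $x$-coefficients as scalars (elements of $\R[x_1,\dots,x_m]$) that are carried along unchanged. So the computation is really "apply the bilinear pairing coefficientwise in $z$, with polynomial-in-$x$ coefficients." Once that is set up, there is no real obstacle — the main (minor) point to state cleanly is that both $\langle\cdot,\cdot\rangle^\lambda$ and $\Symb^\lambda$ use the same binomial weights $\binom{\lambda}{\mu}$, which is precisely why the normalization cancels; the transcendental versions are matched the same way via $\mu!$. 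I would present the argument once, noting that the two cases differ only in replacing $\binom{\lambda}{\mu}$ by $\frac{1}{\mu!}$ and $\binom{\lambda}{\mu}^{-1}$ by $\mu!$, so a single unified computation suffices.
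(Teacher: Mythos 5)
Your proposal is correct and matches the paper's approach exactly: the paper's proof simply observes that the normalizing scalars in $\langle\cdot,\cdot\rangle$ and $\Symb$ were chosen to cancel, and suggests checking on the monomial basis, which is precisely the computation you carry out.
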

\begin{proof}
    This is straightforward, as the scalars present in the expressions of $\langle \cdot, \cdot \rangle$ and $\Symb$ were chosen such that they cancel out in the above expression. We compute this on the monomial basis in the $\langle \cdot, \cdot \rangle^\lambda$ case, and the $\langle \cdot, \cdot \rangle^\infty$ case follows from a similar argument. For any $\mu \leq \lambda$ and $\nu$, let $T_{\mu \to \nu}$ be the linear operator on polynomials of degree $\lambda$ given by $T_{\mu \to \nu}[x^\mu] = x^\nu$ and $T_{\mu \to \nu}[x^\kappa] = 0$ for any other monomial $x^\kappa$. Recalling the bounded-degree definition of the symbol (Definition \ref{symbbounded}), we have:
    \[
        \Symb^\lambda(T_{\mu \to \nu}) = \binom{\lambda}{\mu} z^\mu x^\nu.
    \]
    For any monomial $z^\kappa$ with $\kappa \leq \lambda$, this implies the following for any fixed $x$:
    \[
    \begin{split}
        \langle \Symb^\lambda[T_{\mu \to \nu}](z,x), z^\kappa \rangle^\lambda &= \binom{\lambda}{\mu} x^\nu \cdot \langle z^\mu, z^\kappa \rangle^\lambda \\
            &= \binom{\lambda}{\mu} x^\nu \cdot \binom{\lambda}{\mu}^{-1} \delta_{\kappa=\mu} \\
            &= T_{\mu \to \nu}[x^\kappa].
    \end{split}
    \]
\end{proof}

As we will see very shortly, this will make for quick proofs of the main results given the inner product bounds we have already achieved. Before doing this though, let us discuss some of the linear operator bounds that Anari and Oveis~Gharan achieved in \cite{anari2017generalization}. Note the following differential operator form of $\langle \cdot, \cdot \rangle^\infty$:
\[
    \langle p, q \rangle^\infty = \left.q(\partial_x)q(x)\right|_{x=0}.
\]
Anari and Oveis~Gharan then use use their inner product bound to essentially give capacity preservation results for certain differential operators. Similarly, for multiaffine polynomials $\langle p, q \rangle^{(1^n)} = \left.q(\partial_x)q(x)\right|_{x=0}$, which gives a better bound in the multiaffine case. We now vastly generalize this idea, with a rather short proof.

\begin{theorem} \label{mainthmbounded}
    Let $T: \R_+^\lambda[x_1,...,x_n] \to \R_+^\gamma[x_1,...,x_m]$ be a linear operator such that $\Symb^\lambda(T)$ is real stable in $z$ for every $x \in \R_+^m$. Then for any real stable $p \in \R_+^\lambda[x_1,...,x_n]$, any $\alpha \in \R_+^n$, and any $\beta \in \R_+^m$ we have:
    \[
        \frac{\cpc_\beta(T(p))}{\cpc_\alpha(p)} \geq \frac{\alpha^\alpha (\lambda-\alpha)^{\lambda-\alpha}}{\lambda^\lambda} \cpc_{(\alpha,\beta)}(\Symb^\lambda(T)).
    \]
    Further, this bound is tight for fixed $T$, $\alpha$, and $\beta$.
\end{theorem}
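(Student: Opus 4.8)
The strategy is to reduce the linear-operator statement to the inner-product bound of Theorem~\ref{boundbounded} via the algebraic interpretation of the symbol. First I would invoke the identity $T[p](x) = \langle \Symb^\lambda(T)(z,x), p(z) \rangle^\lambda$, where the inner product acts on the $z$-variables. Fixing $x = y \in \R_+^m$, the polynomial $z \mapsto \Symb^\lambda(T)(z,y)$ lies in $\R_+^\lambda[z_1,\dots,z_n]$ and is real stable by hypothesis, so Theorem~\ref{boundbounded} applies with this polynomial and with $p$, giving
\[
    T[p](y) = \langle \Symb^\lambda(T)(z,y), p(z)\rangle^\lambda \geq \frac{\alpha^\alpha(\lambda-\alpha)^{\lambda-\alpha}}{\lambda^\lambda} \cpc_\alpha(\Symb^\lambda(T)(\cdot,y)) \cpc_\alpha(p).
\]

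\textbf{From pointwise to capacity.} Next I would divide by $y^\beta$ and take the infimum over $y > 0$. The left side becomes $\cpc_\beta(T(p))$ by definition. On the right, $\cpc_\alpha(p)$ and the scalar factor are constants, so the task is to show
\[
    \inf_{y>0} \frac{\cpc_\alpha(\Symb^\lambda(T)(\cdot,y))}{y^\beta} = \cpc_{(\alpha,\beta)}(\Symb^\lambda(T)).
\]
Writing $S := \Symb^\lambda(T)$, by definition $\cpc_\alpha(S(\cdot,y)) = \inf_{z>0} S(z,y)/z^\alpha$, so the left side is $\inf_{y>0}\inf_{z>0} S(z,y)/(z^\alpha y^\beta)$, which is exactly $\inf_{(z,y)>0} S(z,y)/(z,y)^{(\alpha,\beta)} = \cpc_{(\alpha,\beta)}(S)$ since the double infimum over the product of the positive orthants collapses to a single infimum. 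This establishes the inequality. One subtlety to handle cleanly: the hypothesis only requires real stability in $z$ for $y \in \R_+^m$, which is precisely what is needed since we only ever evaluate at positive $y$; I would remark that when $\Symb^\lambda(T)$ is real stable as a polynomial in all of $z,x$ jointly (the form stated in the introduction), specialization at positive reals preserves real stability by Proposition~\ref{rsclosure}, so the two formulations agree.

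\textbf{Tightness.} For the tightness claim I would exhibit, for fixed $T,\alpha,\beta$, a real stable $p$ achieving equality. The natural candidate is driven by the equality analysis of Theorem~\ref{boundbounded} (ultimately the AM-GM equality case after polarization): one wants $p$ of the form $(c\cdot z)^{|\alpha|}$-type, i.e. a product of linear forms or its polarization-preimage, chosen so that $\langle S(\cdot,y), p\rangle^\lambda$ and $\cpc_\alpha(p)$ line up. Concretely, taking $p(z) = \prod_k z_k^{?}$ is too crude; instead I expect the extremal $p$ to be a rescaled power of a single linear form tuned to the minimizing direction, mirroring Lemma~\ref{capcompute}. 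I would construct it by tracing through the equality conditions: polarize, use that the multiaffine Anari--Gharan bound is tight (which they establish), then de-polarize via Proposition~\ref{cappol}.

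\textbf{Expected main obstacle.} The routine part is the infimum manipulation; the genuinely delicate part is tightness, since it requires either citing the tightness of the multiaffine bound and carefully propagating the extremal example back through polarization and through the symbol identity, or building an explicit witness from scratch. Getting the extremal $p$ to simultaneously saturate the inner-product inequality \emph{and} produce the exact capacity ratio --- with no slack introduced by the $\inf_{y>0}$ step --- is where care is needed; I would likely choose $y$ first to be the capacity-minimizer of $\Symb^\lambda(T)$ restricted appropriately, then choose $p$ to be the corresponding extremal polynomial for $\langle\cdot,\cdot\rangle^\lambda$ at that slice, and verify the two infima are jointly attained.
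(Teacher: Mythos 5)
Your proof of the inequality is the paper's proof: apply the symbol identity $T[p](x)=\langle\Symb^\lambda(T)(z,x),p(z)\rangle^\lambda$, invoke Theorem~\ref{boundbounded} at each fixed positive $x$, divide by $x^\beta$, and collapse the iterated infimum. That part is complete and correct.

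The tightness argument is where you wander off the clean path. You frame the goal as exhibiting a single real stable $p$ achieving equality, and you propose to ``trace through the equality conditions'' of the multiaffine Anari--Gharan bound, de-polarize, and then verify that the two infima (over $x$ and over $z$) are jointly attained. This plan is both more laborious than necessary and potentially unsound: equality in the inner-product bound for a fixed slice need not coexist with attainment of the outer infimum, and the overall bound need not be attained by any single $p$ at all --- it is only attained in the limit over a family. The observation that makes the paper's tightness argument one line is that for $p_y(x):=(1+xy)^\lambda$ (your ``product of linear forms'' guess, but scaled to degree $\lambda$), the definition of the symbol gives $T[p_y](x)=\Symb^\lambda(T)(y,x)$ \emph{exactly}, with no inequality involved. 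Combined with the explicit formula $\cpc_\alpha(p_y)=y^\alpha\,\lambda^\lambda/(\alpha^\alpha(\lambda-\alpha)^{\lambda-\alpha})$ from Lemma~\ref{capcompute}, one gets
\[
    \frac{\cpc_\beta(T(p_y))}{\cpc_\alpha(p_y)}=\frac{\alpha^\alpha(\lambda-\alpha)^{\lambda-\alpha}}{\lambda^\lambda}\cdot\frac{1}{y^\alpha}\inf_{x>0}\frac{\Symb^\lambda(T)(y,x)}{x^\beta},
\]
and taking $\inf_{y>0}$ recovers $\cpc_{(\alpha,\beta)}(\Symb^\lambda(T))$ on the nose. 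No appeal to equality cases of the strong Rayleigh inequalities or re-polarization is needed; the family $\{p_y\}_{y>0}$ does all the work. So: your inequality proof matches; your tightness proof is a gap --- not because the final answer is wrong, but because the route you sketch (single extremal witness via AM-GM equality analysis) is not the route that closes, and you miss the direct identity $T[(1+xy)^\lambda]=\Symb^\lambda(T)(y,x)$ that makes the claim immediate.
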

\begin{proof}
    In the proof, let $\langle \cdot, \cdot \rangle := \langle \cdot, \cdot \rangle^\lambda$ and $\Symb := \Symb^\lambda$. By the previous lemma, we have the following for any fixed $x_0 \in \R_+^n$ (here, the inner product acts on the $z$ variables):
    \[
        T(p)(x_0) = \langle \Symb(T)(z,x_0), p(z) \rangle.
    \]
    Theorem \ref{boundbounded} then implies:
    \[
    \begin{split}
        T(p)(x_0) &= \langle \Symb(T)(z,x_0), p(z) \rangle \\
            &\geq \frac{\alpha^\alpha (\lambda-\alpha)^{\lambda-\alpha}}{\lambda^\lambda} \cpc_\alpha(p) \cdot \cpc_\alpha(\Symb(T)(\cdot,x_0)).
    \end{split}
    \]
    Dividing by $x_0^\beta$ on both sides and taking $\inf$ gives:
    \[
        \inf_{x_0 > 0} \frac{T(p)(x_0)}{x_0^\beta} \geq \frac{\alpha^\alpha (\lambda-\alpha)^{\lambda-\alpha}}{\lambda^\lambda} \cpc_\alpha(p) \cdot \inf_{x_0 > 0} \inf_{z > 0} \frac{\Symb(T)(z,x_0)}{z^\alpha x_0^\beta}.
    \]
    This is the desired result. Tightness then follows from considering input polynomials of the form $p(x) = \prod_k (1+x_ky_k)$ for fixed $y \in \R_+^n$, and then taking $\inf$ over $y$.
\end{proof}

As stated in the introduction, this is our main technical result, and we have already discussed some of its applications in \S\ref{applications}. We give a similar result for linear operators on polynomials of any degree.



\begin{theorem} \label{mainthmtrans}
    Let $T: \R_+[x_1,...,x_n] \to \R_+[x_1,...,x_m]$ be a linear operator such that $\Symb^\infty(T)$ is in $\mathcal{LP}_+[z_1,...,z_n]$ for every $x \in \R_+^m$. Then for any $p \in \R_+[x_1,...,x_n]$, any $\alpha \in \R_+^n$, and any $\beta \in \R_+^m$ we have:
    \[
        \frac{\cpc_\beta(T(p))}{\cpc_\alpha(p)} \geq e^{-\alpha} \alpha^\alpha \cpc_{(\alpha,\beta)}(\Symb^\infty(T)).
    \]
    Further, this bound is tight for fixed $T$, $\alpha$, and $\beta$.
\end{theorem}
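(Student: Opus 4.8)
The plan is to mirror exactly the proof of Theorem \ref{mainthmbounded}, but substituting the unbounded-degree inner product bound (Theorem \ref{boundtrans}) for the bounded-degree one (Theorem \ref{boundbounded}), and substituting the algebraic interpretation of $\Symb^\infty$ for that of $\Symb^\lambda$. Concretely, I would first fix an arbitrary $x_0 \in \R_+^m$ and apply the lemma identifying $T[p](x)$ with an inner product: $T(p)(x_0) = \langle \Symb^\infty(T)(z,x_0), p(z)\rangle^\infty$, where the inner product acts on the $z$ variables. For this to make sense I need $\Symb^\infty(T)(z,x_0)$ to lie in $\mathcal{LP}_+[z_1,\dots,z_n]$, which is precisely the hypothesis, and $p$ is a genuine polynomial with nonnegative coefficients, so $\langle\cdot,\cdot\rangle^\infty$ is well-defined (only finitely many terms survive, by the finite support of $p$).

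Next I would invoke Theorem \ref{boundtrans} with $f := \Symb^\infty(T)(z,x_0)$ and the given real stable $p$, yielding
\[
    T(p)(x_0) = \langle \Symb^\infty(T)(z,x_0), p(z)\rangle^\infty \geq \alpha^\alpha e^{-\alpha}\, \cpc_\alpha(p)\cdot \cpc_\alpha\big(\Symb^\infty(T)(\,\cdot\,,x_0)\big).
\]
Then divide both sides by $x_0^\beta$ and take the infimum over $x_0 > 0$. On the left this produces $\cpc_\beta(T(p))$; on the right, since $\cpc_\alpha(p)$ and the scalar $\alpha^\alpha e^{-\alpha}$ do not depend on $x_0$, I am left with $\inf_{x_0 > 0}\,x_0^{-\beta}\,\cpc_\alpha(\Symb^\infty(T)(\cdot,x_0)) = \inf_{x_0>0}\inf_{z>0}\,\frac{\Symb^\infty(T)(z,x_0)}{z^\alpha x_0^\beta} = \cpc_{(\alpha,\beta)}(\Symb^\infty(T))$, which is exactly the claimed bound. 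Rearranging gives $\cpc_\beta(T(p))/\cpc_\alpha(p) \geq e^{-\alpha}\alpha^\alpha \cpc_{(\alpha,\beta)}(\Symb^\infty(T))$, with the usual caveat that if $\alpha \notin \Newt(p)$ or $(\alpha,\beta)\notin\Newt(\Symb^\infty(T))$ the statement is trivial by Fact \ref{newtoncap}.

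For tightness, I would take input polynomials of the form $p(x) = \prod_k (1 + x_k y_k)$ for fixed $y \in \R_+^n$ (exactly as in Theorem \ref{mainthmbounded}), compute $T(p)$ explicitly via the symbol, and then optimize over $y$; the rank-one nature of these $p$ forces equality in the AM-GM step underlying Theorem \ref{boundtrans}. The main obstacle I anticipate is not the inequality chain itself—which is a near-verbatim transcription of the bounded-degree argument—but rather the well-definedness and continuity bookkeeping: one must check that $\langle \Symb^\infty(T)(z,x_0), p(z)\rangle^\infty$ is finite and agrees with $T(p)(x_0)$ for every relevant $x_0$ (using that $\Symb^\infty(T) = T[e^{x\cdot z}]$ and that the scalars in $\langle\cdot,\cdot\rangle^\infty$ and $\Symb^\infty$ were rigged to cancel), and that taking the infimum over $x_0$ commutes with the bound as claimed. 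These are routine given the lemmas already established, so the proof should be short.
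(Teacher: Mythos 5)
Your proposal is correct and follows essentially the same approach as the paper, which simply states that the proof of Theorem \ref{mainthmbounded} can be copied verbatim with the obvious substitutions ($\langle\cdot,\cdot\rangle^\infty$ for $\langle\cdot,\cdot\rangle^\lambda$, $\Symb^\infty$ for $\Symb^\lambda$, and Theorem \ref{boundtrans} for Theorem \ref{boundbounded}). You have merely spelled out the substitution explicitly, including the well-definedness checks the paper leaves implicit.
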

\begin{proof}
    The proof given above for Theorem \ref{mainthmbounded} can be essentially copied verbatim.
\end{proof}

We now combine these results with the Borcea-Br\"and\'en characterization results (Theorems \ref{bbbounded} and \ref{bbtrans}) to give concrete corollaries which directly relate to stability preservers.

\begin{corollary} \label{maincorbounded}
    Suppose $T: \R_+^\lambda[x_1,...,x_n] \to \R_+^\gamma[x_1,...,x_m]$ is a linear operator of rank greater than 2, such that $T$ preserves real stability. Then Theorem \ref{mainthmbounded} applies to $T$.
\end{corollary}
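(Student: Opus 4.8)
The plan is to show directly that $T$ satisfies the two hypotheses of Theorem \ref{mainthmbounded}. The first, that $T$ maps $\R_+^\lambda[x_1,\dots,x_n]$ into $\R_+^\gamma[x_1,\dots,x_m]$, is part of the statement; it also follows at once that $\Symb^\lambda(T)=\sum_{0\le\mu\le\lambda}\binom{\lambda}{\mu} z^\mu\, T(x^\mu)$ has non-negative coefficients, since each $T(x^\mu)$ lies in $\R_+^\gamma[x]$, so the capacity $\cpc_{(\alpha,\beta)}(\Symb^\lambda(T))$ on the right-hand side of Theorem \ref{mainthmbounded} is a sensible quantity. Thus the only real content is the remaining hypothesis: that $\Symb^\lambda(T)(z,x_0)$ is real stable in $z$ for every $x_0\in\R_+^m$.

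For this I would invoke the Borcea-Br\"and\'en characterization (Theorem \ref{bbbounded}). Since $T$ preserves real stability, one of its three alternatives holds, and the third (degenerate) alternative is ruled out because $T$ has rank greater than $2$. Hence either (1) $\Symb^\lambda(T)$ is real stable, or (2) $\Symb^\lambda(T)(z,-x)$ is real stable. In case (1), real stability is preserved under specializing any variable at a real value (Proposition \ref{rsclosure}(3)), so $\Symb^\lambda(T)(z,x_0)$ is real stable in $z$ for every $x_0\in\R^m$, in particular for $x_0\in\R_+^m$, and we are done. In case (2), the same reasoning applied to the real stable polynomial $\Symb^\lambda(T)(z,-x)$ shows that $\Symb^\lambda(T)(z,-x_0')$ is real stable in $z$ for every $x_0'\in\R^m$; writing $x_0=-x_0'$ and letting $x_0'$ range over all of $\R^m$ again yields real stability of $\Symb^\lambda(T)(z,x_0)$ in $z$ for all $x_0\in\R^m\supseteq\R_+^m$. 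Either way, the hypothesis of Theorem \ref{mainthmbounded} is met, and the conclusion of that theorem applies verbatim.

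I do not expect a genuine obstacle here: the corollary is essentially a bookkeeping deduction from Theorems \ref{bbbounded} and \ref{mainthmbounded}. The one point to watch is the sign flip in alternative (2) of the Borcea-Br\"and\'en theorem — at first glance it seems to deliver real stability only for negative values of the $x$-variables, which is the wrong orthant — but since specialization preserves real stability at every real point (not merely positive ones), the flip is harmless and both alternatives collapse to exactly the condition we need. An entirely analogous argument, using Theorem \ref{bbtrans} and Theorem \ref{mainthmtrans} in place of their bounded-degree counterparts, handles Corollary \ref{maincortrans}.
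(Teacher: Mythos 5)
Your proof is correct and follows essentially the same route as the paper: rule out the degenerate alternative of the Borcea--Br\"and\'en characterization using the rank hypothesis, then observe that in either remaining alternative the symbol specializes to a real stable polynomial in $z$ for each fixed $x_0$. The paper dispatches the sign-flip in alternative (2) with a terse ``in either case,'' whereas you spell it out; your added observation that specialization at \emph{any} real value (not just positive) preserves stability is exactly the point that makes the sign flip harmless.
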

\begin{proof}
    Since the image of $T$ is of dimension greater than 2, Theorem \ref{bbbounded} implies one of two possibilities:
    \begin{enumerate}
        \item $\Symb^\lambda[T]$ is real stable.
        \item $\Symb^\lambda[T](z_1,...,z_n,-x_1,...,-x_n)$ is real stable.
    \end{enumerate}
    In either case, we have that $\Symb^\lambda[T]$ is real stable in $z$ for every fixed $x \in \R_+^m$ (see Proposition \ref{rsclosure}). Therefore Theorem \ref{mainthmbounded} applies.
\end{proof}

\begin{corollary} \label{maincortrans}
    Suppose $T: \R_+[x_1,...,x_m] \to \R_+[x_1,...,x_m]$ is a linear operator of rank greater than 2, such that $T$ preserves real stability. Then Theorem \ref{mainthmtrans} applies to $T$.
\end{corollary}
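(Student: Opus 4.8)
The plan is to mirror the proof of Corollary \ref{maincorbounded}, with Theorem \ref{bbtrans} in place of Theorem \ref{bbbounded}, and then to spend the rest of the argument closing the gap between ``real stable in $z$'' (equivalently, in the $\mathcal{LP}$ class) and ``in the $\mathcal{LP}_+$ class in $z$'', which is what the hypothesis of Theorem \ref{mainthmtrans} actually demands. First I would note that since the rank of $T$ is greater than $2$, the degenerate third case of Theorem \ref{bbtrans} is excluded exactly as in the proof of Corollary \ref{maincorbounded} (a space of real stable polynomials of degree at most $2$ has dimension at most $2$). Hence either $\Symb^\infty(T) \in \mathcal{LP}[z,x]$ or $\Symb^\infty(T)(z,-x) \in \mathcal{LP}[z,x]$.

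Next, fixing $x_0 \in \R_+^m$, I would argue that the power series $\Symb^\infty(T)(z,x_0)$ in $z$ lies in $\mathcal{LP}[z]$: in the first case this is specialization of an $\mathcal{LP}$ function at the real point $x = x_0$, and in the second it is specialization of $\Symb^\infty(T)(z,-x)$ at the (real, nonpositive) point $x = -x_0$. Both are legitimate because specialization at an arbitrary real value preserves real stability (Proposition \ref{rsclosure}), and this passes to the $\mathcal{LP}$ class by approximating by real stable polynomials and specializing each (uniform convergence on compact sets is unaffected by fixing coordinates). Moreover the coefficient of $z^\mu$ in $\Symb^\infty(T)(z,x_0)$ is $\tfrac{1}{\mu!}\bigl(T(x^\mu)\bigr)(x_0) \geq 0$, since $T$ carries nonnegative-coefficient polynomials to nonnegative-coefficient polynomials and $x_0 \geq 0$. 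So it remains only to promote ``$\mathcal{LP}[z]$ with nonnegative coefficients'' to ``$\mathcal{LP}_+[z]$''.

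For this last step I would use the weighted-truncation device already employed in the proof of Theorem \ref{boundtrans}. For each sufficiently large $\lambda$ the restriction $T_\lambda := T|_{\R_+^\lambda[x]}$ is a real-stability preserver of rank greater than $2$ (three monomials whose images under $T$ are linearly independent lie in $\R_+^\lambda[x]$ once $\lambda$ dominates their exponents), so Corollary \ref{maincorbounded} applies to $T_\lambda$; in particular $\Symb^\lambda(T) = T[(1+xz)^\lambda]$ is real stable in $z$ for every fixed $x \in \R_+^m$, and it automatically has nonnegative coefficients. Thus $\Symb^\lambda(T)(z/\lambda, x_0)$ is a real stable polynomial in $z$ with nonnegative coefficients, and a termwise comparison using $0 \leq \binom{\lambda}{\mu}\lambda^{-\mu} \leq \tfrac{1}{\mu!}$ (with $\binom{\lambda}{\mu}\lambda^{-\mu} \to \tfrac{1}{\mu!}$) together with the fact that $\Symb^\infty(T)(z,x_0)$ is entire in $z$ (a consequence of the $\mathcal{LP}$ membership just established) shows $\Symb^\lambda(T)(z/\lambda, x_0) \to \Symb^\infty(T)(z,x_0)$ uniformly on compact subsets of $\C^n$. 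Therefore $\Symb^\infty(T)(z,x_0) \in \mathcal{LP}_+[z]$, which is precisely the hypothesis of Theorem \ref{mainthmtrans}, and we conclude.

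The main obstacle is exactly this $\mathcal{LP}$-versus-$\mathcal{LP}_+$ gap. Morally nonnegativity of the coefficients already forces membership in $\mathcal{LP}_+$ (it rules out a Gaussian factor $e^{-az^2}$ and positive zeros in the Laguerre--P\'olya structure), so one could alternatively invoke that classical fact directly and skip the truncation; but the bounded-degree route above has the advantage of staying entirely within the machinery of this paper and of making transparent how the corollary inherits its content from the bounded-degree theory and its Borcea--Br\"and\'en input. Everything else --- excluding the degenerate case by a rank count, specializing coordinates, and the dominated-convergence estimate --- is routine.
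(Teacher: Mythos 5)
Your overall strategy matches the paper's: invoke Theorem \ref{bbtrans}, rule out the degenerate third case by a rank count, and specialize $x$ to a point of $\R_+^m$. The paper's own proof is literally the one sentence ``The same proof works, using Theorem \ref{bbtrans} instead,'' which, read as written, only delivers $\Symb^\infty(T)(\cdot,x_0) \in \mathcal{LP}[z]$, whereas Theorem \ref{mainthmtrans} actually requires membership in $\mathcal{LP}_+[z]$. You have correctly spotted that gap and patched it, and your patch is sound: restricting $T$ to $\R_+^\lambda[x]$ for large $\lambda$ gives a bounded-degree stability preserver of rank $> 2$ (three monomials with linearly independent images eventually lie in the domain), so Corollary \ref{maincorbounded} yields $\Symb^\lambda(T)(z,x_0)$ real stable with nonnegative coefficients, and the rescaled truncations $\Symb^\lambda(T)(z/\lambda,x_0)$ converge uniformly on compacts to $\Symb^\infty(T)(z,x_0)$ via the termwise domination $0 \le \binom{\lambda}{\mu}\lambda^{-\mu} \le 1/\mu!$ and entireness of the limit; this exhibits $\Symb^\infty(T)(\cdot,x_0)$ as a locally uniform limit of nonnegative-coefficient real stable polynomials, i.e.\ as an element of $\mathcal{LP}_+[z]$. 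In short, you take the same route as the paper but supply the $\mathcal{LP}$-to-$\mathcal{LP}_+$ step that the paper elides (alternatively obtainable from the classical Hadamard--Laguerre--P\'olya factorization once one observes the Taylor coefficients are nonnegative, as you note). This is a genuine improvement in rigor over the paper's one-line proof.
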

\begin{proof}
    The same proof works, using Theorem \ref{bbtrans} instead.
\end{proof}

\section{Continuity of Capacity} \label{continuity}

In this section, we discuss the continuity of capacity as a function of the the input polynomial $p$. The main result of this section allows us to limit inner product bounds from $\langle \cdot, \cdot \rangle^\lambda$ to $\langle \cdot, \cdot \rangle^\infty$, which is exactly how we proved Theorem \ref{boundtrans}.

Given a (positive) discrete measure $\mu$ on $\R^n$, we define its generating function as:
\[
    p^\mu(x) := \sum_{\kappa \in \supp(\mu)} \mu(\kappa) x^\kappa.
\]
(Note that we have only restricted $\supp(\mu)$ to be in $\R^n$, and so $p_\mu$ may not be a polynomial.) We further define the log-generating function of $\mu$ as:
\[
    P^\mu(x) := \log(p^\mu(\exp(x))) = \log \sum_{\kappa \in \supp(\mu)} \mu(\kappa) \exp(x \cdot \kappa).
\]
More generally for such a function $p(x)$, we will write:
\[
    p(x) := \sum_\kappa p_\kappa x^\kappa,
\]
\[
    P(x) := \log(p(\exp(x))) = \log \sum_\kappa p_\kappa \exp(x \cdot \kappa).
\]
We care about discrete measures (with not necessarily finite support) whose generating functions are convergent and continuous on $\R_+^n$. This is equivalent to the log-generating function being continuous on $\R^n$. Note that an important example of such a measure is one which has finite support entirely in $\Z_+^n$. The generating functions of such measures are polynomials.

From now on we will write $\supp(p) = \supp(P)$ to denote the support of $\mu$ (as above) and $\Newt(p) = \Newt(P)$ to denote the polytope generated by its support. We first give a few basic results.

\newtheorem*{newtoncap}{Fact \ref{newtoncap}}
\begin{newtoncap}
    For $p$ a continuous generating function, the following are equivalent.
    \begin{enumerate}
        \item $\alpha \in \Newt(p)$.
        \item $\cpc_\alpha p(x) > 0$.
        \item $P(x) - \alpha \cdot x$ is bounded below.
    \end{enumerate}
\end{newtoncap}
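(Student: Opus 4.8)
My plan is to establish $(2)\Leftrightarrow(3)$ for free, then $(1)\Rightarrow(2)$ by the AM--GM computation already used in \S\ref{capss}, and finally $(2)\Rightarrow(1)$, which is the only direction that needs real work once we leave the polynomial setting. For the first step I would extend Fact \ref{convexcap}(1) to continuous generating functions: the substitution $x_i=\exp(y_i)$ (an increasing bijection $\R\to(0,\infty)$) gives $\cpc_\alpha(p)=\exp\inf_{y\in\R^n}(P(y)-\alpha\cdot y)$ with the convention $e^{-\infty}=0$, and the proof uses nothing about $\supp(p)$, so it is the polynomial proof verbatim. Then $(2)\Leftrightarrow(3)$ is immediate: $\cpc_\alpha(p)>0$ iff that infimum is finite iff $P(y)-\alpha\cdot y$ is bounded below in $y$.

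For $(1)\Rightarrow(2)$: if $\alpha\in\Newt(p)=\operatorname{conv}(\supp p)$ then by Carath\'eodory $\alpha$ is already a \emph{finite} convex combination $\sum_i c_i\kappa_i$ of support points, so the inequality chain in the proof of Fact \ref{newtoncap} in \S\ref{capss} goes through unchanged (nonnegativity of the coefficients gives $p(x)\ge\sum_i p_{\kappa_i}x^{\kappa_i}$, then weighted AM--GM), yielding $\cpc_\alpha(p)\ge\prod_i(p_{\kappa_i}/c_i)^{c_i}>0$.

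The main obstacle is $(2)\Rightarrow(1)$, i.e.\ $\alpha\notin\Newt(p)\Rightarrow\cpc_\alpha(p)=0$. For polynomials this is the short argument of \cite{anari2017generalization}: a point outside the convex hull of a \emph{finite} set is \emph{strictly} separated from it with a uniform gap $\delta>0$, whence $p(\exp(tv))/\exp(t\,v\cdot\alpha)=\sum_\kappa p_\kappa\exp(t\,v\cdot(\kappa-\alpha))\le e^{-t\delta}\,p(1,\dots,1)\to0$ as $t\to\infty$. For a general continuous generating function this can fail --- for instance with $\supp(p)=\{(0,0)\}\cup\{(1,\tfrac1k):k\in\N\}$ and $\alpha=(1,0)$ no direction strictly separates $\alpha$ from $\supp(p)$, only the weakly supporting $v=(1,0)$ --- so I would instead induct on $d:=\dim\operatorname{aff}(\supp p)$. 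Passing to the $\exp$-picture and splitting off directions orthogonal to $\operatorname{aff}(\supp p)$ reduces to the case $\alpha\in\operatorname{aff}(\supp p)=\R^d$ (if $\alpha\notin\operatorname{aff}(\supp p)$, a linear functional witnesses $\cpc_\alpha(p)=0$ directly). Then $\alpha$ lies on the boundary of $\overline{\operatorname{conv}(\supp p)}$ --- an interior point of the closure would already lie in $\operatorname{conv}(\supp p)$ --- so a supporting direction $v\ne0$ satisfies $v\cdot(\kappa-\alpha)\le0$ for all $\kappa\in\supp(p)$. Split $p=p_0+p_{<}$ along the face $S_0:=\{\kappa\in\supp p:\,v\cdot(\kappa-\alpha)=0\}$. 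If $S_0=\varnothing$ (which also covers $\alpha\notin\overline{\operatorname{conv}(\supp p)}$), dominated convergence along $\exp(tv)$, $t\to\infty$, with dominating sum $\sum_\kappa p_\kappa=p(1,\dots,1)<\infty$, gives $\cpc_\alpha(p)=0$. If $S_0\ne\varnothing$, then $\dim\operatorname{aff}(S_0)\le d-1$ and $\alpha\notin\operatorname{conv}(S_0)$, so the inductive hypothesis supplies, for any $\varepsilon>0$, a $z_0$ with $p_0(\exp(z_0))/\exp(z_0\cdot\alpha)<\varepsilon$; now \emph{freezing} this $z_0$ and letting $t\to\infty$ kills the remaining sum $\sum_{\kappa\in S_{<}}p_\kappa\exp(t\,v\cdot(\kappa-\alpha))\exp(z_0\cdot(\kappa-\alpha))$ by dominated convergence (it is dominated by $\sum_{\kappa\in S_{<}}p_\kappa\exp(z_0\cdot(\kappa-\alpha))\le p(\exp(z_0))/\exp(z_0\cdot\alpha)<\infty$, using $v\cdot(\kappa-\alpha)<0$ strictly on $S_{<}$), so $p(\exp(tv+z_0))/\exp((tv+z_0)\cdot\alpha)<2\varepsilon$ for large $t$ and hence $\cpc_\alpha(p)=0$.

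Two remarks on where the effort goes. The delicate point is precisely that the supporting hyperplane need not be strict for infinitely supported measures, and the induction's payoff is the ``freeze $z_0$, then send $t\to\infty$'' ordering, which decouples the face optimization from the escape to infinity. In the downstream uses --- continuity of capacity (Corollary \ref{capcont}) and the limiting argument of \S\ref{mainineq} --- only $(1)\Rightarrow(2)$ is invoked, and for polynomials (and for functions in $\mathcal{LP}_+$, whose Newton sets are closed) the strict-separation subtlety is vacuous; the inductive argument above is needed only for the stated equivalence in full generality.
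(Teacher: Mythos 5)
The paper in fact never proves this statement at the level of generality given here. Fact~\ref{newtoncap} first appears in \S\ref{capss} for \emph{polynomials}, where $(1)\Rightarrow(2)$ is proven by AM--GM, $(2)\Leftrightarrow(3)$ is deduced from Fact~\ref{convexcap}, and $(2)\Rightarrow(1)$ is deferred to a strict-separating-hyperplane argument cited from \cite{anari2017generalization}; the restatement in \S\ref{continuity} for continuous generating functions is given without proof. Your $(1)\Rightarrow(2)$ and $(2)\Leftrightarrow(3)$ are essentially the same arguments, transplanted. For $(2)\Rightarrow(1)$, you have identified a genuine gap: when $\supp(p)$ is infinite, $\Newt(p)=\operatorname{conv}(\supp p)$ need not be closed, so $\alpha\notin\Newt(p)$ may admit only a weakly supporting hyperplane (your example $\supp(p)=\{(0,0)\}\cup\{(1,\tfrac1k):k\in\N\}$, $\alpha=(1,0)$ is correct), and the cited polynomial argument does not transfer. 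Your inductive fix --- support the closure at $\alpha$, split off the face $S_0$, apply the inductive hypothesis there to obtain $z_0$, then freeze $z_0$ and escape along $v$ by dominated convergence --- is correct and new relative to the paper. The induction is well-founded since $\dim\operatorname{aff}(S_0)\le d-1$, the base case is a single-point support, the relative-interior fact $\operatorname{int}\overline{C}=\operatorname{int}C$ justifies that $\alpha$ is a boundary point, and the dominating sums are finite by convergence of $p$ on $\R_+^n$. One small point to make explicit: to invoke the inductive hypothesis on $p_0$, you need $p_0$ to again be a continuous generating function; this holds because the tail of $\sum_\kappa p_\kappa x^\kappa$ over any cofinite subset of the support tends to zero uniformly on compacts (Dini), so any sub-sum is continuous. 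Your closing remark is also accurate and worth keeping: only $(1)\Rightarrow(2)$ (equivalently $(1)\Rightarrow(3)$) is used downstream in \S\ref{continuity} and in the proof of Theorem~\ref{boundtrans}, so the delicate direction is not load-bearing for the paper's main results, but is required for the stated equivalence to be correct in the generality claimed.
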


\begin{lemma}
    Any continuous log-generating function $Q(x)$ is convex in $\R^n$.
\end{lemma}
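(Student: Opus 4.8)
This is the log-sum-exp convexity statement, extended from polynomials (where it appears as Fact \ref{convexcap}(2) with $\alpha=0$) to the more general setting of continuous generating functions. The plan is to prove it directly by Hölder's inequality, with the only care being a routine check that all the series involved converge.

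First I would unpack the hypothesis: writing $Q(x) = \log p(\exp(x)) = \log \sum_\kappa q_\kappa \exp(x\cdot\kappa)$ with $q_\kappa \ge 0$, the assumption that $Q$ is a \emph{continuous} log-generating function means precisely that $p(\exp(x)) = \sum_\kappa q_\kappa \exp(x\cdot\kappa)$ converges (to a finite positive value) for every $x \in \R^n$. Hence all the sums appearing below are finite, and we may manipulate them freely.

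Next, fix $x,y \in \R^n$ and $t \in (0,1)$ (the cases $t=0,1$ being trivial). The key algebraic observation is the splitting
\[
    q_\kappa \exp\big((tx+(1-t)y)\cdot\kappa\big) = \big(q_\kappa \exp(x\cdot\kappa)\big)^t \cdot \big(q_\kappa \exp(y\cdot\kappa)\big)^{1-t}.
\]
Summing over $\kappa$ and applying Hölder's inequality with conjugate exponents $\tfrac1t$ and $\tfrac1{1-t}$ gives
\[
    p\big(\exp(tx+(1-t)y)\big) \le \Big(\sum_\kappa q_\kappa \exp(x\cdot\kappa)\Big)^{t} \Big(\sum_\kappa q_\kappa \exp(y\cdot\kappa)\Big)^{1-t} = p(\exp(x))^t \, p(\exp(y))^{1-t},
\]
where the right-hand factors are finite by the convergence noted above. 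Taking logarithms yields $Q(tx+(1-t)y) \le t\,Q(x) + (1-t)\,Q(y)$, which is convexity.

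The only thing to be careful about is the legitimacy of Hölder's inequality when $\supp(p)$ is infinite, but this is just Hölder for series with the counting measure, valid for nonnegative terms, and the two resulting sums are exactly $p(\exp(x))$ and $p(\exp(y))$, which are finite by hypothesis. So I do not expect a real obstacle here; the continuity hypothesis does exactly the work of ruling out divergence. I would also remark in passing that, since adding a linear function preserves convexity, this immediately gives that $P(x) - \alpha\cdot x$ is convex for every $\alpha \in \R^n$, which is the input needed for the generalized version of Fact \ref{newtoncap}.
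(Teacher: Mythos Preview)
Your proof is correct and takes exactly the same approach as the paper, which simply cites H\"older's inequality. You have spelled out the details that the paper leaves implicit.
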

\begin{proof}
    H\"older's inequality.
\end{proof}

Note that proving statements for $p$ is essentially the same as proving for $P$, as suggested in the following lemma.

\begin{lemma}
    Let $p,p_n$ be continuous generating functions. Then $p_n \to p$ uniformly on compact sets of $\R_+^n$ iff $P_n \to P$ uniformly on compact sets of $\R^n$.
\end{lemma}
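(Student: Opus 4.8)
The plan is to prove the equivalence by exploiting the fact that $P_n = \log \circ\, p_n \circ \exp$ and $P = \log \circ\, p \circ \exp$, where $\exp$ and $\log$ are (coordinatewise in the case of $\exp$) homeomorphisms between the relevant spaces. The map $x \mapsto \exp(x) = (e^{x_1},\dots,e^{x_n})$ is a homeomorphism from $\R^n$ onto $\R_{++}^n$, and it sends compact sets to compact sets in both directions; likewise $t \mapsto \log t$ is a homeomorphism from $\R_{++}$ onto $\R$. So the statement essentially reduces to the assertion that pre- and post-composition with these fixed homeomorphisms preserves uniform convergence on compact sets, once we check that the relevant images stay inside compact subsets of $\R_{++}$ (so that $\log$ is uniformly continuous there).

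First I would fix a compact set $K \subset \R^n$ and set $K' := \exp(K) \subset \R_{++}^n$, which is compact. Assuming $p_n \to p$ uniformly on compact subsets of $\R_+^n$, apply this on $K'$: we get $\sup_{y \in K'} |p_n(y) - p(y)| \to 0$. Since $p$ is continuous and positive on $\R_{++}^n$ (positivity of $p$ on $\R_{++}^n$ follows from the coefficients of the generating function being nonnegative and $p \not\equiv 0$; if $p \equiv 0$ the statement is vacuous or handled separately), the quantity $c := \inf_{y \in K'} p(y) > 0$. For $n$ large enough that the sup-error is below $c/2$, all $p_n$ are bounded below by $c/2$ and above by some constant $C$ on $K'$, so their values lie in the compact interval $[c/2, C] \subset \R_{++}$ on which $\log$ is uniformly continuous (Lipschitz, with constant $2/c$). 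Then for $x \in K$,
\[
    |P_n(x) - P(x)| = |\log p_n(\exp x) - \log p(\exp x)| \leq \tfrac{2}{c}\, |p_n(\exp x) - p(\exp x)| \leq \tfrac{2}{c} \sup_{y \in K'} |p_n(y) - p(y)| \to 0,
\]
giving uniform convergence of $P_n \to P$ on $K$. The converse is symmetric: given $P_n \to P$ uniformly on compact sets of $\R^n$, fix a compact $L \subset \R_+^n$. On the portion of $L$ inside $\R_{++}^n$ one runs the same argument with $\exp$ replaced by $\log$ and $\log$ replaced by $\exp$ (which is uniformly continuous on the compact set $\log(L \cap \R_{++}^n)$ once that set is bounded; boundedness away from $+\infty$ is automatic and boundedness away from $-\infty$ comes from $p$ being bounded below by a positive constant on compact subsets of $\R_{++}^n$). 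Points of $L$ on the boundary of $\R_+^n$ are then handled by continuity of $p_n, p$ on all of $\R_+^n$ together with a density argument, or simply by noting that $\langle\,\cdot\,,\cdot\,\rangle$-bounds are only ever applied on $\R_{++}^n$.

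The main obstacle is the uniform lower bound $\inf_{y \in K'} p(y) > 0$, i.e., ensuring that the values of $p_n$ don't approach $0$ on the compact set in question, which is exactly what makes $\log$ behave well (it is not uniformly continuous near $0$). This is where the hypothesis that $p$ is a genuine (nonzero, nonnegative-coefficient) continuous generating function — hence strictly positive on $\R_{++}^n$ — is essential, and it is the one place where we use more than formal manipulation. Everything else is routine bookkeeping about compact exhaustions of $\R_{++}^n$ and uniform continuity of fixed continuous functions on compact sets.
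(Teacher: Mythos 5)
Your proof follows the same route as the paper's: conjugate by the homeomorphism $\exp\colon\R^n\to\R_{++}^n$ and use uniform continuity of $\exp$ and $\log$ on compact subsets of their domains. You are more careful than the paper's one-line argument, correctly isolating the key point that $p$ must be bounded away from $0$ on compact subsets of $\R_{++}^n$ (so that $\log$ is Lipschitz on the relevant range) and flagging the open-versus-closed orthant mismatch at the boundary, both of which the paper glosses over.
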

\begin{proof}
    Equivalence of $p_n \to p$ and $\exp(P_n) \to \exp(P)$ follows form the fact that $\exp : \R_+^n \to \R^n$ is a homeomorphism (and so gives a bijection of compact sets). The fact that $\exp$ and $\log$ are (uniformly) continuous on every compact set in their domains then completes the proof.
\end{proof}

We now get the first half of the desired equality, which is the easier half.

\begin{lemma}
    With $p,p_n$ continuous generating functions and $p_n \to p$ uniformly on compact sets, we have:
    \[
        \lim_{n \to \infty} \inf p_n \leq \inf p.
    \]
\end{lemma}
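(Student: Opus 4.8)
The plan is to recognize this as the elementary upper-semicontinuity half of the continuity statement: the functional $p \mapsto \inf p$ is an infimum of the evaluation functionals $p \mapsto p(x_0)$, and each of those is continuous under uniform convergence on compact sets (a single point being compact, pointwise convergence already suffices). Since the sentence preceding the lemma already flags this as ``the easier half,'' I expect no real obstacle; the care is only in the order of quantifiers.

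First I would fix $\varepsilon > 0$ and, by the definition of the infimum, choose a near-minimizer: a point $x_0$ in the domain with $p(x_0) < \inf p + \varepsilon$. (Here $\inf p$ is finite since $p$ has nonnegative coefficients, so $p > 0$ wherever it is evaluated; if one instead works with the $\alpha$-weighted version $x^{-\alpha}p(x)$, whose support is $\supp(p) - \alpha \subset \R^n$, nothing changes, and even an infimum equal to $-\infty$ would be handled by replacing $\inf p + \varepsilon$ with an arbitrary real bound.)

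Then, holding $x_0$ fixed, I would invoke $p_n \to p$: since $\{x_0\}$ is compact, $p_n(x_0) \to p(x_0)$, so there is an $N$ with $p_n(x_0) < p(x_0) + \varepsilon < \inf p + 2\varepsilon$ for all $n \geq N$. For each such $n$, since $\inf p_n$ is an infimum over the whole domain, $\inf p_n \leq p_n(x_0) < \inf p + 2\varepsilon$. Taking $\limsup_{n\to\infty}$ and then letting $\varepsilon \to 0$ gives $\limsup_n \inf p_n \leq \inf p$, which is the assertion (and in particular bounds $\lim_n \inf p_n$ whenever that limit exists). Applying the same reasoning to $x^{-\alpha}p(x)$ recovers $\limsup_n \cpc_\alpha(p_n) \leq \cpc_\alpha(p)$, the inequality that feeds into one direction of Corollary \ref{capcont}.

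The main---indeed only---subtlety, and it is minor, is that one must pick the near-minimizer $x_0$ of $p$ \emph{before} appealing to convergence, so that convergence is needed only at a single fixed point; trying to track minimizers of the $p_n$ directly is exactly the harder, lower-semicontinuity direction, which genuinely requires $\alpha \in \Newt(p)$ and is treated separately afterward.
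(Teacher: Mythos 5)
Your proof is correct and is essentially identical to the paper's: both fix $\varepsilon > 0$, select a near-minimizer $x_0$ of $p$ (the paper picks $x_m$ from a minimizing sequence), use convergence at that single fixed point to get $p_n(x_0) < \inf p + 2\varepsilon$ for large $n$, and conclude by sending $\varepsilon \to 0$. The only difference is cosmetic, namely that you phrase the conclusion via $\limsup$, which is a slightly more careful reading of the stated inequality.
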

\begin{proof}
    Let $(x_m) \subset \R_+^n$ be a sequence such that $p(x_m) \to \inf p$. For each $m$ we have that $p_n(x_m)$ is eventually near to $p(x_m)$. So for any fixed $\epsilon > 0$, we have the following for $m = m(\epsilon)$ and $n \geq N(\epsilon,m)$:
    \[
        \inf p_n \leq p_n(x_m) \leq p(x_m) + \epsilon \leq \inf p + 2\epsilon.
    \]
    The result follows by sending $\epsilon \to 0$.
\end{proof}

We now set out to prove the second half of the desired equality, the difficulty for which arises whenever $\alpha$ is on the boundary of $\Newt(p)$. 

\begin{lemma}
    Suppose $0$ is in the interior of $\Newt(p)$. Then $\inf P$ is attained precisely on some compact convex subset $K$ of $\R^n$.
\end{lemma}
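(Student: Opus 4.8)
The plan is to take $K$ to be the argmin set $\{x \in \R^n : P(x) = \inf P\}$ and check that it is nonempty, closed, bounded, and convex. Convexity and closedness are cheap. By the preceding lemma $P$ is convex on $\R^n$, so every sublevel set $\{P \le c\}$ is convex; once we know $\inf P$ is finite and attained, $K = \bigcap_\ell \{P \le \inf P + 1/\ell\}$ is an intersection of convex sets, hence convex. Closedness is immediate from continuity of $P$. So the real content is: (i) $\inf P > -\infty$, and (ii) the sublevel sets of $P$ are bounded. Given (i) and (ii), each sublevel set is compact (bounded plus closed), so $P$ attains its minimum, and $K$ is then a closed subset of a compact sublevel set, hence compact.

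Point (i) is free: since $0 \in \Newt(p)$, Fact \ref{newtoncap} applied with $\alpha = 0$ says $P(x) - 0 \cdot x = P(x)$ is bounded below. Point (ii) is where the hypothesis that $0$ lies in the \emph{interior} of $\Newt(p)$ enters, and this is the main obstacle. I would argue by contradiction: suppose $\{P \le c\}$ is unbounded, choose $x_\ell$ in it with $\|x_\ell\| \to \infty$, and after passing to a subsequence let $u := \lim_\ell x_\ell/\|x_\ell\|$, a unit vector. Since $0$ is interior to $\Newt(p) = \overline{\mathrm{conv}}(\supp(p))$, a ball $B(0,\delta)$ is contained in it; evaluating the linear functional $\langle\,\cdot\,, u\rangle$ at $\tfrac{\delta}{2}u \in \Newt(p)$ and writing it as a limit of convex combinations of support points gives $\sup_{\kappa \in \supp(p)} \kappa \cdot u \ge \tfrac{\delta}{2}$, so there is a fixed $\kappa_0 \in \supp(p)$ with $\kappa_0 \cdot u > \tfrac{\delta}{4}$, hence $\kappa_0 \cdot x_\ell > \tfrac{\delta}{4}\|x_\ell\|$ for $\ell$ large. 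Bounding $P$ below by the single nonnegative term indexed by $\kappa_0$,
\[
    P(x_\ell) = \log \sum_\kappa p_\kappa \exp(x_\ell \cdot \kappa) \ge \log p_{\kappa_0} + x_\ell \cdot \kappa_0 > \log p_{\kappa_0} + \tfrac{\delta}{4}\|x_\ell\| \longrightarrow \infty,
\]
contradicting $P(x_\ell) \le c$. Hence all sublevel sets of $P$ are bounded.

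Combining: $P$ is continuous with compact sublevel sets, so it attains $\inf P$ on the compact set $K = \{P = \inf P\}$, which is nonempty and (by the first paragraph) convex, completing the proof. The only step needing any care is the claim that interiority of $0$ forces $\sup_{\kappa} \kappa \cdot u$ to be bounded away from $0$ on the unit sphere; this is just the statement that the support function of a convex body containing $0$ in its interior is strictly positive, and I would dispatch it by the one-line convex-combination argument indicated above. Everything else is a routine assembly of convexity, continuity, and Fact \ref{newtoncap}.
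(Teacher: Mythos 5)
Your proof is correct and uses the same central device as the paper's: argue by contradiction, extract a limit direction $u$ of the putative unbounded sequence, and use the hypothesis that $0$ lies in the interior of $\Newt(p)$ to conclude that $u$ pairs strictly positively with the Newton polytope. The two proofs diverge only in how the contradiction is closed. The paper chooses $\epsilon u \in \Newt(p)$ and invokes Fact~\ref{newtoncap} a second time to conclude $P(x) - \epsilon u \cdot x$ is bounded below, while the minimizing sequence makes this quantity tend to $-\infty$. You instead extract a single support point $\kappa_0 \in \supp(p)$ with $\kappa_0 \cdot u$ bounded away from zero and bound $P$ from below by that one nonnegative term, forcing $P(x_\ell) \to +\infty$ along the sequence. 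Your version replaces the second appeal to the AM--GM-based Fact~\ref{newtoncap} with an elementary one-line inequality, and phrasing the argument in terms of compactness of all sublevel sets (rather than boundedness of minimizing sequences) makes the final assembly of nonemptiness, compactness, and convexity of the argmin set slightly more mechanical. The geometric content and the role of interiority are otherwise identical.
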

\begin{proof}
    By a previous lemma, $\inf P$ is finite. Suppose $x_n$ is an unbounded sequence (with monotonically increasing norm) such that $P(x_n)$ limits to $\inf P$. By compactness of the $n$-dimensional sphere, we can assume by restricting to a subsequence that $\frac{x_n}{\|x_n\|}$ limits to some $u$. Pick $\epsilon > 0$ small enough such that $\epsilon u \in \Newt(p)$, and consider $P(x) - \epsilon u \cdot x$. We then have:
    \[
        \lim_{n \to \infty} P(x_n) - \epsilon u \cdot x_n = \lim_{n \to \infty} P(x_n) - \epsilon \|x_n\| \left(u \cdot \frac{x_n}{\|x_n\|}\right) = -\infty.
    \]
    However, since $\epsilon u \in \Newt(p)$ we have that $P(x) - \epsilon u \cdot x$ is bounded below, a contradiction. So, every sequence limiting to $\inf P$ is bounded, and therefore $\inf P$ is attained on a bounded set. By convexity of $P$, this set is convex.
\end{proof}

The next few results then finish the proof of continuity of $\cpc_\alpha(\cdot)$ under certain support conditions.

\begin{proposition}
    Let $p$ and $p_n$ be continuous generating functions such that $p_n \to p$, with $0$ in the interior of $\Newt(p)$. Then:
    \[
        \lim_{n \to \infty} \inf p_n = \inf p.
    \]
\end{proposition}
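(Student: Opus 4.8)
The plan is to pass to logarithmic coordinates and set $P_n(x)=\log p_n(\exp x)$ and $P(x)=\log p(\exp x)$. Since $\exp\colon\R^n\to\R_{++}^n$ is a homeomorphism, these are convex functions on $\R^n$ (by the convexity lemma above), $P_n\to P$ uniformly on compact sets (by the preceding translation lemma), and $\inf_{\R^n}P_n=\log\inf p_n$, $\inf_{\R^n}P=\log\inf p$. One inequality, $\limsup_n\inf p_n\le\inf p$, is already available from the previous lemma, so the task is to show $\liminf_n\inf p_n\ge\inf p$, which I would do by contradiction. The only use of the hypothesis is this: since $0$ lies in the interior of $\Newt(p)$, we have $\delta u\in\Newt(p)$ for every unit vector $u$ and all small $\delta>0$, so Fact~\ref{newtoncap} makes $P(x)-\delta u\cdot x$ bounded below, and hence $P(\rho u)\to+\infty$ as $\rho\to+\infty$ along every ray.

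Assume then that $\liminf_n\inf p_n<\inf p$; after passing to a subsequence, $\inf p_n\to L<\inf p$. Pick near-minimizers $\xi_n\in\R^n$ with $p_n(\exp\xi_n)\le\inf p_n+\tfrac1n$; then $\sup_nP_n(\xi_n)<\infty$ (because $\inf p_n+\tfrac1n$ is bounded) and $\limsup_nP_n(\xi_n)\le\log L$. First I would check that $(\xi_n)$ has no bounded subsequence: a limit point $\xi_\infty$ of such a subsequence would satisfy $P(\xi_\infty)=\lim_j P_{n_j}(\xi_{n_j})\le\log L$ (uniform convergence on a compact ball plus continuity of $P$), which is impossible, since $\log L<\inf P\le P(\xi_\infty)$ when $L>0$ and $P$ is real-valued when $L=0$. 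So $\|\xi_n\|\to\infty$, and after a further subsequence $u_n:=\xi_n/\|\xi_n\|\to u$ with $\|u\|=1$. Now fix $\rho>0$; for $n$ large the point $\rho u_n$ lies on the segment $[0,\xi_n]$, so convexity of $P_n$ gives
\[
    P_n(\rho u_n)\le\Bigl(1-\tfrac{\rho}{\|\xi_n\|}\Bigr)P_n(0)+\tfrac{\rho}{\|\xi_n\|}P_n(\xi_n).
\]
As $n\to\infty$ the left-hand side tends to $P(\rho u)$ (uniform convergence on the sphere of radius $\rho$ and continuity of $P$), while on the right $P_n(0)\to P(0)$ and $\tfrac{\rho}{\|\xi_n\|}P_n(\xi_n)$ has nonpositive $\limsup$ (its numerator is bounded above and $\tfrac{\rho}{\|\xi_n\|}\to0$). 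Hence $P(\rho u)\le P(0)$ for every $\rho\ge0$, contradicting the ray-coercivity of $P$ recorded above.

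The step I expect to be the real obstacle is precisely this last one: preventing the near-minimum of $p_n$ from escaping toward the boundary of $\R_+^n$ or off to infinity, where we have no control on $p_n-p$. What resolves it is the pairing of two facts: convexity of $P_n$ lets one slide a too-small value of $P_n$ far out along a ray back to a value of $P_n$ on a \emph{fixed} compact sphere, where $P_n\approx P$; and $0$ being interior to $\Newt(p)$ is exactly the condition forcing $P$ to be coercive along every ray, which the slid-back inequality then contradicts. The remaining pieces — working in log-coordinates, using compactness of the unit sphere, and the bounded-subsequence case — are routine.
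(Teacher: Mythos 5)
Your proof is correct. The paper's argument is organized differently: rather than arguing by contradiction and extracting an escaping direction, it invokes the lemma immediately preceding the proposition (that $\inf P$ is attained on a nonempty compact convex set $K$), encloses $K$ in a slightly larger compact $K'$, observes that $P > \inf P + c_0$ on the boundary of $K'$ for some $c_0 > 0$, transfers this lower bound to $P_n$ on the boundary of $K'$ by uniform convergence, and then uses convexity of $P_n$ to extend that lower bound to all of $\R^n \setminus K'$; this pins the infimum of $P_n$ inside $K'$, where $P_n \approx P$, giving $\inf P_n \ge \inf P - \epsilon$ directly. Your version does not appeal to the compact-attainment lemma; instead it replays a variant of its escape-to-infinity argument inline, and the convexity step takes the different form of the two-point interpolation inequality $P_n(\rho u_n) \le (1 - \rho/\|\xi_n\|) P_n(0) + (\rho/\|\xi_n\|) P_n(\xi_n)$ pushed to the limit to deduce $P(\rho u) \le P(0)$ for all $\rho \ge 0$, contradicting ray-coercivity. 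Both proofs hinge on exactly the same two facts --- convexity of the log-generating functions and coercivity of $P$ along every ray, forced by $0 \in \mathrm{int}\,\Newt(p)$ --- so the difference is largely structural: the paper's direct version yields a quantitative estimate (once $P_n$ is within $\epsilon$ of $P$ on $K'$, then $\inf P_n \ge \inf P - \epsilon$), while yours is somewhat more self-contained in not needing the attainment lemma as a black box, at the cost of handling the escaping-sequence bookkeeping by hand.
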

\begin{proof}
    Given the above lemma, we only have the $\geq$ direction left to prove. Since $0$ is in the interior of $\Newt(p)$, there is some compact convex $K \subset \R^n$ such that $P(x) = \inf P$ iff $x \in K$. Further, this implies that for any compact set $K'$ whose interior contains $K$, there exists $c_0 > 0$ such that $P(x) > \inf P + c_0$ on the boundary of $K'$. For any fixed positive $\epsilon < \frac{c_0}{2}$ and large enough $n$, we then have:
    \[
        |P_n - P| < \epsilon < \frac{c_0}{2} \text{ in } K' \Longrightarrow |P_n - \inf P| < \epsilon < \frac{c_0}{2} \text{ in } K,
    \]
    \[
        P_n > \inf P + (c_0 - \epsilon) > \inf P + \frac{c_0}{2} \text{ on the boundary of } K'.
    \]
    Convexity of $P_n$ then implies $P_n(x) > \inf P + \frac{c_0}{2}$ outside of $K'$. Therefore for any $\epsilon$ and large enough $n$:
    \[
        \inf P_n = \inf_{x \in K'} P_n \geq \inf P - \epsilon.
    \]
    Letting $\epsilon \to 0$ gives the result.
\end{proof}

We now set out to prove a similar statement whenever $0$ is on the boundary on $\Newt(p)$. This ends up needing a bit more restriction.

\begin{lemma}
    Suppose $0$ is on the boundary on $\Newt(P)$. Then there exists $A \in SO_n(\R)$ such that:
    \[
        \Newt(A \cdot P) \subset \{\kappa: \kappa_n \geq 0\},
    \]
    \[
        \inf \left.(A \cdot P)\right|_{x_n=-\infty} = \inf P.
    \]
\end{lemma}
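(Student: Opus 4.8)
The plan is to use a rotation that "tilts" the boundary point $0$ of $\Newt(P)$ so that the Newton polytope moves into the closed upper half-space $\{\kappa_n \geq 0\}$, and then to check that restricting $x_n \to -\infty$ does not change the infimum. First I would invoke the supporting-hyperplane theorem at the boundary point $0 \in \partial\Newt(P)$: there exists a unit vector $v \in \R^n$ such that $v \cdot \kappa \geq 0$ for every $\kappa \in \Newt(P)$, i.e. $\Newt(P) \subset \{\kappa : v \cdot \kappa \geq 0\}$. Choosing $A \in SO_n(\R)$ to be any rotation sending the standard basis vector $e_n$ to $v$ (equivalently sending $v$ to $e_n$ under $A^{-1}$, depending on how the action $A\cdot P$ is normalized), the polytope $\Newt(A\cdot P)$, which is the image of $\Newt(P)$ under the corresponding linear map, lands inside $\{\kappa_n \geq 0\}$. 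This gives the first displayed inclusion.

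For the second equality, write $Q := A\cdot P$, so $\supp(Q)$ lies in $\{\kappa_n \geq 0\}$ and $Q(x) = \log \sum_\kappa q_\kappa \exp(x\cdot\kappa)$. Since every $\kappa$ appearing has $\kappa_n \geq 0$, the term $\exp(x_n \kappa_n)$ is nonincreasing as $x_n \to -\infty$, and each term with $\kappa_n > 0$ tends to $0$ while each term with $\kappa_n = 0$ is unaffected. Hence the pointwise limit $\left.Q\right|_{x_n = -\infty}(x_1,\ldots,x_{n-1})$ exists and equals $\log \sum_{\kappa : \kappa_n = 0} q_\kappa \exp(\sum_{j<n} x_j \kappa_j)$, i.e. the log-generating function of the "face" of $Q$ on $\{\kappa_n = 0\}$. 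Moreover, monotonicity in $x_n$ shows $Q(x_1,\ldots,x_n) \geq \left.Q\right|_{x_n=-\infty}(x_1,\ldots,x_{n-1})$ for all finite $x_n$, so $\inf Q \geq \inf \left.Q\right|_{x_n=-\infty}$. For the reverse inequality, given any point $(x')$ in $\R^{n-1}$ we can approximate $\left.Q\right|_{x_n=-\infty}(x')$ by $Q(x', t)$ for $t$ sufficiently negative, which gives $\inf Q \leq \inf\left.Q\right|_{x_n=-\infty}$. Combining, $\inf Q = \inf\left.Q\right|_{x_n=-\infty}$. Finally, since $A \in SO_n(\R)$ acts by an invertible change of variables on $x$ (the substitution $x \mapsto A^{-1}x$ or its transpose inverse, matching the definition of $A\cdot P$), we have $\inf Q = \inf (A\cdot P) = \inf P$, giving the second displayed equality.

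The step I expect to be the main obstacle is making the $x_n \to -\infty$ limit fully rigorous: one must verify that the convergence $Q(x', t) \to \left.Q\right|_{x_n=-\infty}(x')$ is good enough (e.g. uniform on compact sets in $x'$, which follows from the exponential decay of the dropped terms being uniform on compacta) so that the infimum over the $(n-1)$-dimensional slice is genuinely the limit of the $n$-dimensional infimum, and in particular that the infimum of $Q$ is not attained "at finite $x_n$ strictly below" — but monotonicity of each term in $x_n$ handles exactly this. A secondary subtlety is bookkeeping the precise convention for the action $A \cdot P$ (whether the rotation acts on exponent vectors or on the variables $x$), but since rotations are orthogonal the two are related by transpose-inverse $= $ the same rotation up to relabeling, so the argument is insensitive to the convention as long as one is consistent; I would fix the convention $\supp(A\cdot P) = A\cdot\supp(P)$ and $(A\cdot P)(x) = P(A^{-1}x)$ and note $\inf$ is preserved because $A^{-1}$ is a bijection of $\R^n$.
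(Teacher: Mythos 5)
Your proposal follows the same strategy as the paper: use the supporting-hyperplane theorem at $0 \in \partial\Newt(P)$ to get a unit normal $v$, rotate by $A \in SO_n(\R)$ so that the normal aligns with $e_n$, conclude $\Newt(A\cdot P)\subset\{\kappa_n\ge 0\}$, and then use monotonicity in $x_n$ (each term $\exp(\kappa\cdot x)$ with $\kappa_n\ge 0$ is nondecreasing in $x_n$) to identify $\inf(A\cdot P)$ with $\inf(A\cdot P)|_{x_n=-\infty}$. Your treatment of the $x_n\to-\infty$ limit is slightly more explicit than the paper's (you verify both inequalities; the paper just observes that $A\cdot P$ decreases as $x_n$ decreases ``and we care about $\inf$''), but it is the same idea.

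One slip worth noting: with your stated convention $(A\cdot P)(x) = P(A^{-1}x)$ and $\supp(A\cdot P) = A\cdot\supp(P)$, the condition $\Newt(A\cdot P)\subset\{\kappa_n\ge 0\}$ requires $e_n\cdot A\mu = (A^T e_n)\cdot\mu\ge 0$ for all $\mu\in\Newt(P)$, hence $A^T e_n = v$, i.e.\ $Av = e_n$. You initially say $A$ sends $e_n$ to $v$, which is the opposite (that gives $A^T v = e_n$, not $A^T e_n = v$). The paper takes $Ac = e_n$. You do flag that the direction is convention-dependent, but under the convention you yourself fix, your initial choice is the wrong way around; this is a bookkeeping error rather than a gap in the argument, and the rest of the proof is sound once it is corrected.
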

\begin{proof}
    Since $0$ is on the boundary of the convex set $\Newt(P)$, a separating hyperplane gives a unit vector $c$ such that $(c|\mu) \geq 0$ for all $\mu \in \Newt(P)$. Let $A \in SO_n(\R)$ be such that $Ac = e_n$. We first have:
    \[
        \inf A \cdot P = \inf P(A^{-1}x) = \inf P.
    \]
    Since $\Newt(A \cdot P) = A \cdot \Newt(P)$ and $(e_n|A\mu) = (c|\mu) \geq 0$ for every $\mu \in \Newt(P)$, we have that $\Newt(A \cdot P) \subset \{\kappa : \kappa_n \geq 0\}$. Therefore:
    \[
        \inf \left.(A \cdot P)\right|_{x_n=-\infty} = \inf A \cdot P = \inf P.
    \]
    Note that $\left.(A \cdot P)\right|_{x_n=-\infty}$ denotes the continuous log-generating function given by the terms $\kappa$ of the support of $A \cdot P$ for which $\kappa_n = 0$. This is justified, as $\Newt(A \cdot P) \subset \{\kappa : \kappa_n \geq 0\}$ implies that $A \cdot P$ decreases as $x_n$ decreases (and we care about $\inf$).
\end{proof}

\begin{theorem}
    Let $p$ and $p_m$ be continuous generating functions such that $p_m \to p$, with $0 \in \Newt(p)$. Suppose further that eventually $\Newt(p_m) \subseteq \Newt(p)$. Then:
    \[
        \lim_{m \to \infty} \inf p_m = \inf p.
    \]
\end{theorem}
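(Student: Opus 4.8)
The plan is to induct on $n$, peeling off one variable at a time with the rotation lemma until $0$ lands in the interior of the relevant Newton polytope. By the ``easy half'' lemma above we already have $\limsup_m \inf p_m \le \inf p$, so it suffices to prove $\liminf_m \inf p_m \ge \inf p$. If $0$ lies in the interior of $\Newt(p)$, this is exactly the Proposition proved above (which requires no containment hypothesis); the degenerate case $\Newt(p) = \{0\}$ is subsumed here and serves as the base case of the induction.

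So assume $0 \in \partial\,\Newt(p)$. I would apply a variant of the rotation lemma to $P$ — choosing the separating hyperplane to have rational normal and using the associated unimodular integral change of coordinates $A$ in place of a rotation, so that $\inf$ is preserved and the transformed support stays on a lattice inside $\{\kappa_n \ge 0\}$ — to obtain $\Newt(A\cdot P) \subseteq \{\kappa_n \ge 0\}$ and $\inf(A\cdot P)|_{x_n=-\infty} = \inf P$. Set $Q := (A\cdot P)|_{x_n=-\infty}$ and $Q_m := (A\cdot P_m)|_{x_n=-\infty}$, continuous log-generating functions in $x_1,\dots,x_{n-1}$. For $m$ large, $\Newt(A\cdot P_m) = A\cdot\Newt(P_m) \subseteq A\cdot\Newt(P) \subseteq \{\kappa_n\ge 0\}$, so $A\cdot P_m$ is nondecreasing in $x_n$; hence $\inf P_m = \inf(A\cdot P_m) = \inf Q_m$, and likewise $\inf P = \inf Q$. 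Moreover $0 = A\cdot 0 \in \Newt(Q)$ and $\Newt(Q_m) = \Newt(A\cdot P_m)\cap\{\kappa_n=0\} \subseteq \Newt(A\cdot P)\cap\{\kappa_n=0\} = \Newt(Q)$. Granting that $Q_m \to Q$ uniformly on compacts of $\R^{n-1}$, the inductive hypothesis gives $\inf Q_m \to \inf Q$, i.e. $\inf p_m \to \inf p$, and the dimension has dropped by one.

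The crux — the step I expect to be the main obstacle — is that restriction to the boundary face commutes with the limit: $Q_m \to Q$ uniformly on compacts. On the multiplicative side $\tilde p_m := A\cdot p_m \to \tilde p := A\cdot p$ uniformly on compacts, all with nonnegative coefficients supported in $\Newt(\tilde p) \subseteq \{\kappa_n\ge 0\}$; let $q_m := \exp Q_m$ and $q := \exp Q$ be the face parts ($\kappa_n = 0$), and write $\tilde p_m(x',\delta) = q_m(x') + g_m(x',\delta)$ with $g_m \ge 0$ the off-face remainder. For each $m$ one has $g_m(x',\delta)\downarrow 0$ as $\delta\downarrow 0$, but the real issue is uniformity in $m$. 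Since the transformed support lies on a lattice with the face at $\kappa_n = 0$, every off-face exponent satisfies $\kappa_n \ge 1$, whence $g_m(x',\delta) \le \delta\, g_m(x',1) \le \delta\, \tilde p_m(x',1)$ for $0 < \delta \le 1$, and $\tilde p_m(x',1)$ is bounded uniformly in large $m$ on any compact set because $\tilde p_m \to \tilde p$. Combining this uniform bound with the uniform convergence $\tilde p_m(\cdot,\delta) \to \tilde p(\cdot,\delta)$ at a fixed small $\delta$ and the analogous bound $g(x',\delta) \le \delta\,\tilde p(x',1)$ for the remainder of the limit itself, a routine $\epsilon/3$ argument yields $q_m \to q$ uniformly on compacts, completing the induction. (Absent a lattice structure one would instead have to extract the needed uniform-in-$m$ control of $g_m$ from convexity of the $P_m$ alone, which is the genuinely delicate point; in all of this paper's applications the supports are lattices, so the argument above applies.)
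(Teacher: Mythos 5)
Your strategy is the paper's strategy---induction on the number of variables, a rotation so that $0$ lands on a coordinate face, restriction to that face---but you do something essential that the paper's own proof omits: you verify that the face restrictions converge, $Q_m \to Q$ uniformly on compacts, which is precisely the hypothesis needed to invoke the inductive step. The paper's chain of equalities simply asserts the inductive conclusion without checking this. And this is a real gap, not a formality: in the log coordinates where the convergence is formulated, passing to the face means taking $x_n \to -\infty$, which is a limit and not an evaluation on a compact set, so uniform-on-compacts convergence in $m$ does not automatically commute with it. Your caution about the general case is also more than caution. If one only assumes $\Newt(p_m) \subseteq \Newt(p)$ (as the theorem is stated), the off-face exponents of $p_m$ can tend to $0$ and the face parts need not converge; e.g. $p = 1 + y$ and $p_m = \tfrac{1}{2} + y + \tfrac{1}{2}\,y^{1/m}$ satisfy every stated hypothesis, with $P_m \to P$ uniformly on compacts of $\R^2$, yet $\inf p_m = \tfrac{1}{2} \neq 1 = \inf p$. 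So some version of your extra lattice/finite-support condition is genuinely needed, not just convenient. The downstream Corollary \ref{capcont} is unaffected, since there one works with the restriction $q_n$ of $p_n$ to $\supp(p)$ with $p$ a polynomial, which is exactly the setting your argument covers.

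On the mechanics of the rotation: your proposed fix of taking a rational separating normal and an integral unimodular change of coordinates in place of $A \in SO_n(\R)$ is one clean way to keep a lattice. An alternative that avoids changing the lemma is to keep $A \in SO_n(\R)$ and observe that when $\supp(p)$ is finite, the off-face exponents of $A \cdot p$ are bounded below by some $\delta_0 > 0$; your estimate $g_m(x',\delta) \leq \delta\, g_m(x',1)$ then becomes $g_m(x',\delta) \leq \delta^{\delta_0}\, g_m(x',1)$ and the rest of your $\epsilon/3$ argument runs unchanged. Either way, your handling of the face-convergence step is exactly the piece the paper's proof of this theorem needs and does not provide.
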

\begin{proof}
    Given the above proposition, we only need to prove this in the case where $0$ is on the boundary of $\Newt(p)$. In that case, the previous lemma gives an $A \in SO_n(\R)$ such that $\Newt(A \cdot P) \subset \{\kappa : \kappa_n \geq 0\}$ and $\inf \left.(A \cdot P)\right|_{x_n=-\infty} = \inf P$. Since $P_m \to P$ implies $A \cdot P_m \to A \cdot P$, we now relax to proving $\lim_{m \to \infty} \inf A \cdot P_m = \inf A \cdot P$. By assumption, eventually $\Newt(P_m) \subseteq \Newt(P)$ which implies $\Newt(A \cdot P_m) \subseteq \Newt(A \cdot P) \subset \{\kappa : \kappa_n \geq 0\}$. So, eventually $\Newt(\left.(A \cdot P_m)\right|_{x_n=-\infty}) \subseteq \Newt(\left.(A \cdot P)\right|_{x_n=-\infty})$ and $\inf A \cdot P_m = \inf \left.(A \cdot P_m)\right|_{x_n=-\infty}$. By induction on the number of variables, we then have:
    \[
        \lim_{m \to \infty} \inf A \cdot P_m = \lim_{m \to \infty} \inf \left.(A \cdot P_m)\right|_{x_n=-\infty} = \inf \left.(A \cdot P)\right|_{x_n=-\infty} = \inf A \cdot P.
    \]
    For the base case, $p_m$ and $p$ are scalars and the result is trivial.
\end{proof}

\begin{corollary} \label{capcont}
    Let $p_n$ be polynomials with nonnegative coefficients and $p$ analytic such that $p_n \to p$, with $\alpha \in \Newt(p)$. Then:
    \[
        \lim_{n \to \infty} \cpc_\alpha p_n = \cpc_\alpha p.
    \]
\end{corollary}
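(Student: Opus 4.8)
The plan is to prove the two inequalities $\limsup_n \cpc_\alpha(p_n) \le \cpc_\alpha(p)$ and $\liminf_n \cpc_\alpha(p_n) \ge \cpc_\alpha(p)$ separately, working throughout with log-generating functions. Writing $P(x) := \log p(\exp x)$ and $P_n(x) := \log p_n(\exp x)$, Fact \ref{convexcap} identifies $\cpc_\alpha(p)$ with $\exp \inf_{x \in \R^n}(P(x) - \alpha \cdot x)$, and by the lemma relating uniform convergence of generating functions to that of their logarithms, $p_n \to p$ uniformly on compact subsets of $\R_+^n$ is equivalent to $P_n - \alpha\cdot x \to P - \alpha\cdot x$ uniformly on compact subsets of $\R^n$. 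Here $P - \alpha\cdot x$ is the continuous log-generating function of the measure obtained by translating the support of $p$ by $-\alpha$, so $\Newt(P - \alpha\cdot x) = \Newt(p) - \alpha$, and this set contains $0$ precisely because $\alpha \in \Newt(p)$. This is exactly the hypothesis needed to feed the $\alpha$-shifted data into the continuity results proved earlier in this section.

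The upper bound needs no support hypothesis. Since $\cpc_\alpha(p) > 0$ by Fact \ref{newtoncap}, for each $\varepsilon > 0$ choose $y > 0$ with $p(y)/y^\alpha < \cpc_\alpha(p) + \varepsilon$; pointwise convergence $p_n(y) \to p(y)$ then gives $\cpc_\alpha(p_n) \le p_n(y)/y^\alpha < \cpc_\alpha(p) + 2\varepsilon$ for all large $n$. (This is just the ``easy half'' lemma of this section applied to the $\alpha$-shifted functions.)

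The lower bound is the crux, and the obstruction to applying the theorem just proved directly to $p_n$ and $p$ is that $\Newt(p_n)$ need not be contained in $\Newt(p)$: the approximants $p_n$ may carry monomials far outside $\Newt(p)$, with small but nonzero coefficients. I fix this by truncation. Let $\tilde p_n$ be the sum of those terms of $p_n$ whose exponent lies in $\Newt(p)$. Since $p_n$ has nonnegative coefficients, $p_n(x) \ge \tilde p_n(x)$ for all $x > 0$, hence $\cpc_\alpha(p_n) \ge \cpc_\alpha(\tilde p_n)$; and $\Newt(\tilde p_n) \subseteq \Newt(p)$ by construction. A short estimate — coefficientwise convergence $(p_n)_\mu \to p_\mu$ together with a tail bound on $\sum_{|\mu| > N}(p_n)_\mu x^\mu$ coming from nonnegativity and the local boundedness of $\{p_n\}$ — shows $\tilde p_n \to p$ uniformly on compact sets. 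Passing to $\alpha$-shifted log-generating functions as in the first paragraph, the theorem just proved now applies (its two hypotheses, $0 \in \Newt(p) - \alpha$ and ``the Newton polytopes of the approximants lie in that of the limit'', both hold) and yields $\lim_n \cpc_\alpha(\tilde p_n) = \cpc_\alpha(p)$. Therefore $\liminf_n \cpc_\alpha(p_n) \ge \liminf_n \cpc_\alpha(\tilde p_n) = \cpc_\alpha(p)$, and combining with the upper bound completes the proof.

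The step I expect to be the main obstacle is isolated above: it is the lower bound in the case where $\alpha$ lies on the \emph{boundary} of $\Newt(p)$. When $\alpha$ is interior, $0$ is interior to $\Newt(p) - \alpha$ and the continuity proposition with $0$ in the interior of the Newton polytope handles the shifted $p_n$ directly, with no truncation required; it is only in the boundary case that one must go through the truncation reduction in order to invoke the boundary version of the theorem just proved, whose proof is where the separating-hyperplane rotation by an element of $SO_n(\R)$ and the induction on the number of variables actually do the work. Everything else in the argument is bookkeeping with the exponential change of coordinates.
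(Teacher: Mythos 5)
Your proof is correct and matches the paper's approach: handle the $\leq$ direction via the easy pointwise lemma, then for $\geq$ truncate $p_n$ using nonnegativity of coefficients to obtain approximants $\tilde p_n \leq p_n$ with $\Newt(\tilde p_n) \subseteq \Newt(p)$ and $\tilde p_n \to p$, then invoke the preceding theorem. The only cosmetic difference is the truncation set: you keep terms of $p_n$ with exponent in $\Newt(p)$, whereas the paper keeps terms with exponent in $\supp(p)$; both yield $\Newt(\tilde p_n) \subseteq \Newt(p)$ and both converge to $p$ by the same nonnegativity argument, so this is inessential.
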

\begin{proof}
    As in the previous proposition, we only have the $\geq$ direction to prove. Let $q_n$ be defined as the sum of the terms of $p_n$ which appear in the support of $p$. Since the $p_n$ are polynomials with nonnegative coefficients, we have that $q_n \to p$. By the previous theorem, we then have:
    \[
        \lim_{n \to \infty} \cpc_\alpha p_n \geq \lim_{n \to \infty} \cpc_\alpha q_n = \cpc_\alpha p.
    \]
\end{proof}

Note that the fact that $q_n \to p$ holds after restricting to the support of $p$ relies on the fact that $p_n$ and $q_n$ are polynomials with positive coefficients. This is the main barrier to generalizing this corollary to all continuous generating functions.

\section{Concluding Remarks}

We have given here tight bounds on capacity preserving operators related to real stable polynomials. These results are essentially corollaries of inner product bounds, extended from bounds of Anari and Oveis~Gharan, all eventually based on the strong Rayleigh inequalities. That said, there are a number of pieces of this that may be able to be altered or generalized, and this raises new questions.

The first is that of the inner product: are there other inner products for which we can obtain bounds? The main conjecture in this direction is that of Gurvits in \cite{gurvits2009multivariate}.

\begin{conjecture}[Gurvits]
    Let $p,q \in \R_+[x_1,...,x_n]$ be homogeneous real stable polynomials of total degree $d$. Then:
    \[
        \sum_{\|\mu\|_1 = d} \binom{d}{\mu}^{-1} p_\mu q_\mu \geq \frac{\alpha^\alpha}{d^d} \cpc_\alpha(p) \cpc_\alpha(q).
    \]
\end{conjecture}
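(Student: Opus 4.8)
\textbf{Plan of attack (the statement is an open conjecture, so this is a proposal together with an identification of the obstruction).} Write $\langle p,q\rangle_d := \sum_{|\mu|=d}\binom{d}{\mu}^{-1}p_\mu q_\mu$ for the pairing in the conjecture, and recall that we may assume $\alpha\in\Newt(p)\cap\Newt(q)$, forcing $|\alpha|=d$ (otherwise one capacity is $0$ and the inequality is vacuous). The first step is to notice that $\langle\cdot,\cdot\rangle_d$ is a rescaling of $\langle\cdot,\cdot\rangle^\infty$: since $\binom{d}{\mu}^{-1}=\mu!/d!$ on the slice $|\mu|=d$, and a homogeneous $q$ of degree $d$ has $q_\mu=0$ off that slice, one gets $\langle p,q\rangle_d=\frac{1}{d!}\langle q,p\rangle^\infty=\frac{1}{d!}\,q(\partial_x)p$. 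A homogeneous real stable polynomial with nonnegative coefficients lies in $\mathcal{LP}_+$, so Theorem \ref{boundtrans} applies verbatim with $f=q$ and yields, using $e^{-\alpha}=e^{-|\alpha|}=e^{-d}$,
\[
\langle p,q\rangle_d = \frac{1}{d!}\langle q,p\rangle^\infty \geq \frac{\alpha^\alpha e^{-\alpha}}{d!}\cpc_\alpha(p)\cpc_\alpha(q) = \frac{d^d e^{-d}}{d!}\cdot\frac{\alpha^\alpha}{d^d}\cpc_\alpha(p)\cpc_\alpha(q).
\]
This is exactly the conjectured bound multiplied by the Stirling factor $d^d e^{-d}/d!\sim (2\pi d)^{-1/2}$; so the entire content of the conjecture is to remove this one deteriorating factor, and in particular the conjecture holds for each fixed $d$ up to a $d$-dependent constant.

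\emph{Recovering the sharp constant via polarization.} To kill the Stirling factor I would pass to the full polarization of forms. Given homogeneous real stable $p$ of degree $d$ in $x_1,\dots,x_n$, let $\mathrm{pol}(p)$ be the unique multilinear form in $d$ vector-variables $v^{(1)},\dots,v^{(d)}\in\R^n$, symmetric under permuting the blocks, with $\mathrm{pol}(p)(v,\dots,v)=p(v)$; this is a multiaffine polynomial in the $nd$ scalar variables $v^{(i)}_j$, homogeneous of total degree $d$. Three facts are needed: (i) $\mathrm{pol}(p)$ is real stable — this is a Grace--Walsh--Szeg\H{o}-type preservation statement, unconditional here because the upper half-plane is convex; (ii) a monomial-basis bookkeeping computation gives the exact identity $\langle p,q\rangle_d=\langle\mathrm{pol}(p),\mathrm{pol}(q)\rangle^{(1^{nd})}$ (the normalizations $\binom{d}{\mu}^{-1}$ were chosen precisely for this; one checks it directly on $x^\mu$, and e.g. for $n=d=2$ it is immediate); and (iii) $\cpc_\alpha(p)=\cpc_\gamma(\mathrm{pol}(p))$, where $\gamma\in\R_+^{nd}$ assigns the value $\alpha_j/d$ to the variable $v^{(i)}_j$, by applying the symmetric-diagonalization property of Proposition \ref{capclosure} blockwise (collapsing all $d$ blocks to a single one, which is legitimate because $\mathrm{pol}(p)$ is block-symmetric). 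Since $\sum_j(\alpha_j/d)=|\alpha|/d=1$ in each block, $\gamma^\gamma=\prod_{i,j}(\alpha_j/d)^{\alpha_j/d}=\prod_j(\alpha_j/d)^{\alpha_j}=\alpha^\alpha/d^d$. Hence the conjecture is \emph{equivalent} to the following strengthening of the Anari--Gharan multiaffine inner-product bound, restricted to the class of multiaffine real stable polynomials $P,Q$ that are homogeneous of some degree $d$ (equivalently, supported on the slice $\{|\nu|=d\}$): for all admissible $\gamma$,
\[
\langle P,Q\rangle^{(1^N)} \geq \gamma^\gamma\,\cpc_\gamma(P)\cpc_\gamma(Q),
\]
that is, the Anari--Gharan bound with its factor $(1-\gamma)^{1-\gamma}$ deleted.

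\emph{The main obstacle.} The heuristic reason $(1-\gamma)^{1-\gamma}$ should disappear in the homogeneous case is that this factor quantifies how much of a polynomial's monomial mass can lie off the relevant simplex slice, and homogeneity forbids any such mass. The difficulty — and the reason Gurvits' conjecture remains open — is that every bound on $\langle\cdot,\cdot\rangle^{(1^N)}$ available here, and in \cite{anari2017generalization}, is extracted from the strong Rayleigh inequalities through polarization, and that route genuinely pays the $(1-\gamma)^{1-\gamma}$ cost even on homogeneous inputs: the strong Rayleigh inequalities are blind to the total-degree grading. So a proof appears to require a new, strictly stronger log-concavity-type inequality for real stable polynomials that is sensitive to homogeneity (a ``graded strong Rayleigh'' inequality), and no such inequality is presently known. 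I would regard constructing that inequality — or finding a way to exploit the block-symmetry of $\mathrm{pol}(p)$ to bypass it — as the hard part. (Supporting evidence that the constant $\alpha^\alpha/d^d$ is correct: the conjecture can be verified directly when $p$ and $q$ are products of linear forms, where it reproduces Schrijver-type lower bounds, and in other structured cases such as matching polynomials.)
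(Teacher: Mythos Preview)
The paper does not prove this statement; it is presented in the concluding remarks as an open conjecture, with only the observation that the associated symbol is $T[(z\cdot x)^d]$ and that its link to stability preservers is unclear. You correctly flag the statement as open, and your derivation of the conjectured bound weakened by the Stirling factor $d^d e^{-d}/d!$ via Theorem~\ref{boundtrans} is valid and is a useful partial result the paper itself does not record.

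Your proposed route to the sharp constant, however, contains a genuine error: claim (i) is false. The full block polarization $\mathrm{pol}(p)$ you describe is \emph{not} the variable-by-variable polarization $\Pol^\lambda$ used in the paper, and Grace--Walsh--Szeg\H{o} does not cover it; in fact this operation does not preserve real stability. Concretely, $p(x_1,x_2)=(x_1+x_2)(x_1+2x_2)=x_1^2+3x_1x_2+2x_2^2$ is real stable with nonnegative coefficients, and
\[
\mathrm{pol}(p)(v,w)=v_1w_1+\tfrac{3}{2}(v_1w_2+v_2w_1)+2v_2w_2 = v_1\bigl(w_1+\tfrac{3}{2} w_2\bigr)+v_2\bigl(\tfrac{3}{2} w_1+2w_2\bigr).
\]
With $w_1=i$ and $w_2=1+i$ one gets $A:=w_1+\tfrac{3}{2}w_2=\tfrac{3}{2}+\tfrac{5}{2}i$ and $B:=\tfrac{3}{2}w_1+2w_2=2+\tfrac{7}{2}i$, and a short computation gives $-B/A=-(47+i)/34$. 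Since this is not a nonpositive real, there exist $v_1,v_2$ in the open upper half-plane with $v_1/v_2=-B/A$; e.g.\ $v_2=-34+\tfrac{34}{100}i$ and $v_1=-v_2\,B/A=47.01+0.53i$ work, and then $\mathrm{pol}(p)(v,w)=0$ with all four scalar inputs strictly in the upper half-plane.

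Because (i) fails, the equivalence you assert between Gurvits' conjecture and a ``homogeneous'' sharpening of the Anari--Gharan multiaffine bound is not established: even granting that sharpened inequality, one could not apply it to $\mathrm{pol}(p)$ and $\mathrm{pol}(q)$. So the obstruction sits already at the polarization step, not (only) inside the strong Rayleigh machinery --- consistent with the paper's remark that this multinomial inner product does not fit cleanly into the Borcea--Br\"and\'en framework.
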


The main difference here is that we use multinomial coefficients rather than products of binomial coefficients. Note that the symbol operator associated to this inner product is given by $T[(z \cdot x)^d]$ (dot product of $z$ and $x$). It is not immediately clear how this inner product relates to real stable polynomials, as the link to stability preservers is less clear than in the Borcea-Br\"and\'en case.

The next is the class of polynomials: are there more general classes of polynomials for which weaker capacity bounds can be achieved? One such bound is achieved for \emph{strongly log-concave polynomials} (originally studied by Gurvits in \cite{gurvits2009multivariate}) in \cite{anari2018log}, and this class contains basis generating polynomials of matroids. (Note that the authors call these polynomials \emph{completely log-concave}, and they are also called \emph{Lorentzian} in \cite{branden2019lorentzian}.) This bound relies on a weakened version of the strong Rayleigh inequalities, where a factor of 2 is introduced. It is unclear what applications such a bound has beyond those of \cite{anari2018log}.

The last is a question about the further applicability of the main results of this paper. In particular, all of the operators studied here are differential operators. Are there applications of non-differential operators? Also, are there ways to get a handle on the location of the roots of a polynomial via capacity? This second question is of particular interest, as it may lead to a more unified and a direct approach to various root bounding results. For example, the root bounds of \cite{finiteconvolutions} are at the heart of the proof of the Kadison-Singer conjecture in \cite{marcus2015interlacing}. Can capacity be used to achieve those bounds?

\subsection*{Acknowledgements}

The second author is thankful to Peter Csikv\'ari for discussions on his bound and on a number of related topics and results, and also to Nick Ryder for discussions on topics related to the polynomial theory used in this paper. The authors are also grateful to Igor Pak for very helpful comments.

\bibliographystyle{amsalpha_titles}
\bibliography{bibliography}

\newcommand{\etalchar}[1]{$^{#1}$}
\providecommand{\bysame}{\leavevmode\hbox to3em{\hrulefill}\thinspace}
\providecommand{\MR}{\relax\ifhmode\unskip\space\fi MR }
\providecommand{\MRhref}[2]{%
  \href{http://www.ams.org/mathscinet-getitem?mr=#1}{#2}
}
\providecommand{\href}[2]{#2}
\begin{thebibliography}{GGOW16}

\bibitem[AG17]{anari2017generalization}
Nima Anari and Shayan~Oveis Gharan, \emph{A generalization of permanent
  inequalities and applications in counting and optimization}, Proceedings of
  the 49th Annual ACM SIGACT Symposium on Theory of Computing, ACM, 2017,
  pp.~384--396.

\bibitem[AGV18]{anari2018log}
Nima Anari, Shayan~Oveis Gharan, and Cynthia Vinzant, \emph{Log-concave
  polynomials, entropy, and a deterministic approximation algorithm for
  counting bases of matroids}, arXiv preprint arXiv:1807.00929 (2018).

\bibitem[AHK18]{adiprasito2018hodge}
Karim Adiprasito, June Huh, and Eric Katz, \emph{Hodge theory for combinatorial
  geometries}, Annals of Mathematics \textbf{188} (2018), no.~2, 381--452.

\bibitem[Bap89]{bapat1989mixed}
R.~B. Bapat, \emph{Mixed discriminants of positive semidefinite matrices},
  Linear Algebra and its Applications \textbf{126} (1989), 107--124.

\bibitem[BB09a]{bb1}
Julius Borcea and Petter Br{\"a}nd{\'e}n, \emph{The Lee-Yang and
  P{\'o}lya-Schur programs I: Linear operators preserving stability},
  Inventiones mathematicae \textbf{177} (2009), no.~3, 541--569.

\bibitem[BB09b]{bb2}
\bysame, \emph{The Lee-Yang and P{\'o}lya-Schur programs II: Theory of stable
  polynomials and applications}, Communications on Pure and Applied Mathematics
  \textbf{62} (2009), no.~12, 1595–--1631.

\bibitem[BGO{\etalchar{+}}17]{burgisser2017alternating}
Peter B{\"u}rgisser, Ankit Garg, Rafael Oliveira, Michael Walter, and Avi
  Wigderson, \emph{Alternating minimization, scaling algorithms, and the
  null-cone problem from invariant theory}, arXiv preprint arXiv:1711.08039
  (2017).

\bibitem[BH19]{branden2019lorentzian}
Petter Br{\"a}nd{\'e}n and June Huh, \emph{Lorentzian polynomials}, arXiv
  preprint arXiv:1902.03719 (2019).

\bibitem[Br{\"a}07]{strongrayleigh}
Petter Br{\"a}nd{\'e}n, \emph{Polynomials with the half-plane property and
  matroid theory}, Advances in Mathematics \textbf{216} (2007), no.~1,
  302--320.

\bibitem[CC89]{craven1989jensen}
Thomas Craven and George Csordas, \emph{Jensen polynomials and the Tur{\'a}n
  and Laguerre inequalities}, Pacific Journal of Mathematics \textbf{136}
  (1989), no.~2, 241--260.

\bibitem[COSW04]{choe2004homogeneous}
Young-Bin Choe, James~G. Oxley, Alan~D. Sokal, and David~G. Wagner,
  \emph{Homogeneous multivariate polynomials with the half-plane property},
  Advances in Applied Mathematics \textbf{32} (2004), no.~1-2, 88--187.

\bibitem[Csi14]{csikvari2014lower}
P{\'e}ter Csikv{\'a}ri, \emph{Lower matching conjecture, and a new proof of
  Schrijver's and Gurvits's theorems}, arXiv preprint arXiv:1406.0766 (2014).

\bibitem[Ego81]{erorychev1981proof}
G.~P. Egorychev, \emph{Proof of the van der Waerden conjecture for permanents},
  Siberian Mathematical Journal \textbf{22} (1981), no.~6, 854--859.

\bibitem[Fal81]{falikman1981proof}
Dmitry~I. Falikman, \emph{Proof of the van der Waerden conjecture regarding the
  permanent of a doubly stochastic matrix}, Mathematical notes of the Academy
  of Sciences of the USSR \textbf{29} (1981), no.~6, 475--479.

\bibitem[FG08]{friedland2008lower}
Shmuel Friedland and Leonid Gurvits, \emph{Lower bounds for partial matchings
  in regular bipartite graphs and applications to the monomer--dimer entropy},
  Combinatorics, Probability and Computing \textbf{17} (2008), no.~3, 347--361.

\bibitem[FKM08]{friedland2008number}
Shmuel Friedland, E.~Krop, and Klas Markstr{\"o}m, \emph{On the number of
  matchings in regular graphs}, the electronic journal of combinatorics
  \textbf{15} (2008), no.~1, 110.

\bibitem[GGOW15]{garg2015operator}
Ankit Garg, Leonid Gurvits, Rafael Oliveira, and Avi Wigderson, \emph{Operator
  scaling: Theory and applications}, arXiv preprint arXiv:1511.03730 (2015).

\bibitem[GGOW16]{garg2016deterministic}
\bysame, \emph{A deterministic polynomial time algorithm for non-commutative
  rational identity testing}, Foundations of Computer Science (FOCS), 2016 IEEE
  57th Annual Symposium on, IEEE, 2016, pp.~109--117.

\bibitem[Gur06]{gurvits2006hyperbolic}
Leonid Gurvits, \emph{Hyperbolic polynomials approach to Van der
  Waerden/Schrijver-Valiant like conjectures: sharper bounds, simpler proofs
  and algorithmic applications}, Proceedings of the thirty-eighth annual ACM
  symposium on Theory of computing, ACM, 2006, pp.~417--426.

\bibitem[Gur08]{gurvits2008van}
\bysame, \emph{Van der Waerden/Schrijver-Valiant like conjectures and stable
  (aka hyperbolic) homogeneous polynomials: one theorem for all}, the
  electronic journal of combinatorics \textbf{15} (2008), no.~1, 66.

\bibitem[Gur09]{gurvits2009multivariate}
\bysame, \emph{On multivariate Newton-like inequalities}, Advances in
  Combinatorial Mathematics, Springer, 2009, pp.~61--78.

\bibitem[Gur11]{gurvits2011unleashing}
\bysame, \emph{Unleashing the power of Schrijver's permanental inequality with
  the help of the Bethe approximation}, arXiv preprint arXiv:1106.2844 (2011).

\bibitem[HL72]{HL}
Ole~J. Heilmann and Elliott~H. Lieb, \emph{Theory of Monomer-Dimer Systems},
  Communications in Mathematical Physics (1972), 190--232.

\bibitem[HSW18]{huh2018correlation}
June Huh, Benjamin Schr{\"o}ter, and Botong Wang, \emph{Correlation bounds for
  fields and matroids}, arXiv preprint arXiv:1806.02675 (2018).

\bibitem[Lea17]{leake2017representation}
Jonathan Leake, \emph{A representation theoretic explanation of the
  Borcea-Br{\"a}nd{\'e}n characterization}, arXiv preprint arXiv:1706.06168
  (2017).

\bibitem[MSS15a]{finiteconvolutions}
Adam Marcus, Daniel Spielman, and Nikhil Srivastava, \emph{Finite Free
  Convolutions of Polynomials}, arXiv preprint arXiv:1504.00350 (2015).

\bibitem[MSS15b]{marcus2015interlacing}
\bysame, \emph{Interlacing families II: Mixed characteristic polynomials and
  the Kadison-Singer problem}, Annals of Mathematics (2015), 327--350.

\bibitem[Ren06]{renegar2006hyperbolic}
James Renegar, \emph{Hyperbolic programs, and their derivative relaxations},
  Foundations of Computational Mathematics \textbf{6} (2006), no.~1, 59--79.

\bibitem[Sch98]{schrijver1998counting}
Alexander Schrijver, \emph{Counting 1-factors in regular bipartite graphs}, J.
  Comb. Theory, Ser. B \textbf{72} (1998), no.~1, 122--135.

\bibitem[SV17]{straszak2017real}
Damian Straszak and Nisheeth~K. Vishnoi, \emph{Real stable polynomials and
  matroids: Optimization and counting}, Proceedings of the 49th Annual ACM
  SIGACT Symposium on Theory of Computing, ACM, 2017, pp.~370--383.

\bibitem[Wal22]{walsh}
J.~L. Walsh, \emph{On the location of the roots of certain types of
  polynomials}, Transactions of the American Mathematical Society (1922),
  163--180.

\bibitem[Zac17]{zackrisson2017coefficients}
Samuel Zackrisson, \emph{Coefficients and zeros of mixed characteristic
  polynomials}, 2017.

\end{thebibliography}

\end{document}